\numberwithin{equation}{section}
\numberwithin{figure}{section}
\def\expo_#1{{\rm e}^{#1}}
\def\R{{\mathbb R}}
\def\C{{\mathbb C}}
\def\Z{{\mathbb Z}}
\def\1{1\!{\rm l}}
\def\build#1_#2^#3{\mathrel{\mathop{\kern 0pt#1}\limits_{#2}^{#3}}}
\def\td_#1,#2{\mathrel{\mathop{\build\longrightarrow_{#1\rightarrow #2}^{}}}}
\newcommand{\ben}{\begin{equation}}
\newcommand{\een}{\end{equation}}
\newcommand{\beno}{\begin{eqnarray*}}
\newcommand{\eeno}{\end{eqnarray*}}
\newtheorem{theorem}{Theorem}
\newtheorem{proposition}{Proposition}
\newtheorem{lemma}{Lemma}
\newtheorem{remark}{Remark}
\date{}
\title{UG}
\newcounter{rea}
\newcounter{reb}
\newcounter{res}
\title[Boundedness of the Bergman projection ]{Boundedness of the Bergman projection on some weighted mixed norm Lebesgue spaces of the upper-half space}
\author{}
\author[J.M. Tanoh Dje, Felix Ofori and Beno\^it. F.  Sehba]{Jean$-$Marcel Tanoh Dje, Felix Ofori and Beno\^it F. Sehba}
\address{Unit\'e de Recherche et d'Expertise Num\'erique, 
Universit\'e virtuelle de C\^ote d'Ivoire, Abidjan.}
\email{{\tt tanoh.dje@uvci.edu.ci}}
\address{Department of Mathematics, University of Ghana,  P.O. Box L.G 62 Legon, Accra, Ghana.}
\email{{\tt fofori006@st.ug.edu.gh}}
\address{Department of Mathematics, University of Ghana,  P.O. Box L.G 62 Legon, Accra, Ghana.}
\email{{\tt bfsehba@ug.edu.gh}}
\subjclass{30H20, 32A25,42B35}
\keywords{Bergman space, Hardy space, Bergman projection, Atomic decomposition, weight}
\begin{document}
\maketitle
\begin{abstract}
  In this paper, we prove the boundedness of the Bergman projection on weighted mixed norm spaces of the upper-half space for some weights that are constructed using the logarithm function and growth functions. Our necessary and sufficient condition is the same as in the unweighted case, that is it involves only the parameters and not the weight. We then provide some applications in terms of dual and derivative characterization, and an atomic decomposition of the corresponding Bergman spaces. 
\end{abstract}
\section{Introduction}
\medskip
We put ourselves in the upper-half space $\C_+:=\{z\in\C: \Im mz>0\}$. We denote by $\mathcal H(\C_+)$ the set of holomorphic functions in $\C_+$. Let $\alpha>-1$, and let $1\le p,q<\infty$. For $\Omega$ a weight (positive function) on $(0,\infty)$, we denote by $L_{\Omega,\alpha}^{p,q}(\C_+)$ the set of all measurable functions $f$ on $\mathbb{C}_+$ such that
$$\Vert f\Vert_{\Omega,\alpha,p,q}^q:=\int_0^\infty\left(\int_{\mathbb{R}}|f(x+iy)|^pdx\right)^{\frac qp}\Omega(y)y^\alpha dy<\infty.$$
In particular, when $\Omega(y)=1$ and  $\alpha>-1$, we use the notations $L_\alpha^{p,q}(\mathbb{C}_+)=L^{p,q}(\mathbb{C}_+,(\Im mz)^\alpha dV(z))$ for the corresponding space and $\|\cdot\|_{\alpha,p,q}$ for the corresponding norm.
\medskip

 For $1\le p,q<\infty$ and $\alpha>-1$,
the mixed norm Bergman space $A_{\Omega,\alpha}^{p,q}(\mathbb{C}_+)$ is the intersection $L_{\Omega,\alpha}^{p,q}(\mathbb{C}_+)\cap \mathcal H(\C_+)$. We note that depending on the weight, this space can be trivial.
\medskip 

When $p=q$, the above space is just denoted $L_{\Omega,\alpha}^{p}(\C_+)$, and its norm $\Vert \cdot\Vert_{\Omega,\alpha,p}$.

For simplicity, we will be using the notation $$dV_\alpha(z)=\left(\Im m(z)\right)^\alpha dV(z)=y^\alpha dxdy\quad \textrm{for}\quad z=x+iy.$$ 
\medskip

For $\alpha>-1$, the Bergman projection $P_\alpha$ is the orthogonal projection from $L_\alpha^2(\mathbb{C}_+)$ into its closed subspace $A_\alpha^2(\mathbb{C}_+)$. This operator is  given by $$P_\alpha f(z)=\int_{\mathbb{C}_+}K_\alpha(z,w)f(w) dV_\alpha(w)$$
where $$K_\alpha(z,w)=\frac{c_\alpha}{\left(z-\bar{w}\right)^{2+\alpha}}$$
is the Bergman kernel, and $c_\alpha$ is a constant that depends only on $\alpha$.
\medskip
We denote by $P_\alpha^+$, the operator with kernel $\left|K_\alpha(\cdot,\cdot)\right|$.
\medskip

It is a well known and easy to verify fact that the Bergman projection $P_\alpha$ (and equivalently $P_\alpha^+$) extends to a bounded projection on $L_\alpha^p(\mathbb{C}_+)$ for any $1<p<\infty$. The problem of characterizing the weights $\omega$ such that $P_\alpha$ is bounded on $L^p(\mathbb{C}_+, \omega(z)dV_\alpha(z))$ was considered and solved by D. B\'ekoll\'e and A. Bonami in 1978 and 1982 (see \cite{Be1978,BeBo}). They proved that a necessary and sufficient condition for this to hold, is that the weight $\omega$ should satisfy the following condition also known as the $B_p$-condition:
\begin{equation}\label{eq:Bpcondition}
  [\omega]_p:=\sup_{I}\left(\frac{1}{|Q_I|_\alpha}\int_{Q_I}\omega(z)dV_\alpha(z)\right)\left(\frac{1}{|Q_I|_\alpha}\int_{Q_I}\left(\omega(z)\right)^{-\frac{p'}{p}}dV_\alpha(z)\right)^{\frac{p}{p'}}<\infty.  
\end{equation}
Here the supremum is taken over all subintervals $I$ of $\mathbb{R}$, $Q_I$ is the square above $I$, and $pp'=p'+p$.
\medskip


Obviously, this condition is difficult to verify for general weights. Moreover, this result doesn't say anything about the case of mixed norm spaces defined above. 
\medskip

Our aim in this paper is to give an answer to the problem of boundedness of the Bergman projection on weighted mixed norm spaces for an appropriate family of weights on $(0,\infty)$. The particularity here is that for the weights considered here, the condition obtained is the same as in the unweighted case (see \cite{BanSeh}).

Let us defined our weights. We recall that a growth function is any function 
$\Phi: \mathbb{R}_{+}\longrightarrow \mathbb{R}_{+}$, not identically null, that is continuous, nondecreasing and onto. 
\vskip .2cm
We observe that if $\Phi$ is a growth function, then  $\Phi(0)=0$ and $\lim_{t \to \infty}\Phi(t)=\infty$.
\medskip

Let  $p>0$ be a real and $\Phi$  a growth function. We say that  $\Phi$ is of upper-type (resp. lower-type) $p>0$ if there exists a constant
$C_{p}> 0$ such that for all $t\geq 1$ (resp. $0< t\leq 1$), 
\begin{equation}\label{eq:sui8n}
\Phi(st)\leq C_{p}t^{p}\Phi(s),~~\forall~s>0.\end{equation}
We denote by $\mathscr{U}^{p}$ (resp. $\mathscr{L}_{p}$) the set of all growth functions of upper-type $p \geq 1$ (resp. lower-type $0< p\leq 1$) such that the function 
$t\mapsto \frac{\Phi(t)}{t}$ is non decreasing (resp. non-increasing) on $\mathbb{R}_{+}^{*}= \mathbb{R}_{+}\backslash\{0\}$. We put
$   \mathscr{U}:=\bigcup_{p\geq 1}\mathscr{U}^{p}$ (resp. $\mathscr{L}:=\bigcup_{0< p\leq 1}\mathscr{L}_{p}$).
\medskip

The weights we consider here are powers of the following function
\begin{equation}\label{eq:formofweight}
    \mathcal{T}_\Phi^{\vec{\varepsilon}}(t)=1+\varepsilon_1\ln_+\left(\Phi\left(\frac{1}{t}\right)\right)+\varepsilon_2\ln_+\left(\Phi(t)\right), t>0.
\end{equation}
Here $\vec{\varepsilon}=(\varepsilon_1,\varepsilon_2)\in\{0,1\}^2$, and
$\Phi$ is a growth function. 

In fact in the upper half plane, these weights are taken as powers of
$$\mathcal{T}_\Phi^{\Vec{\varepsilon}}(z)=1+\varepsilon_1\ln_+\left(\Phi\left(\frac{1}{\Im mz}\right)\right)+\varepsilon_2\ln_+\left(\Phi(|z|)\right), (\varepsilon_1,\varepsilon_2)\in\{0,1\}^2.$$
When $\Phi(t)=t$, and $(\varepsilon_1,\varepsilon_2)=(1,1)$, the corresponding weight $\omega(z)$ is exactly the one that appears in the study of Bloch functions of the upper half plane (see \cite{BGS1}). 
\medskip 

 Recall that  a function $
f$ is a Bloch function if \begin{equation} \label{bloch-intro}
    \Im m(z)|f'(z)|\leq C.
\end{equation}
Let us put $$\|f\|_*:=|f(i)|+\sup_{z\in \C_+}\Im m(z)|f'(z)|.$$ Then one has (see \cite{BGS1}) that there is a constant $C>0$ such that for any Bloch function $f$, 
$$|f(z)|\leq C\|f\|_*\left(\ln\left((e+\frac{1}{\Im mz}\right)+\ln\left(e+|z|\right)\right), z\in \C_+.$$
\medskip
Our main result is as follows.

\begin{theorem}\label{thm:main1}
Let $1\leq p,q<\infty$, $\alpha, \beta>-1$. Assume that $\Phi\in \mathscr{L}\cup\mathscr{U}$. Put $\omega=\mathcal{T}_\Phi^{\Vec{\varepsilon}}$, and let $k\in\mathbb{R}$. Then the following assertions are equivalent.
\begin{enumerate}
    \item[(a)] The operator $P_\beta$ is bounded on  $L^{p,q}(\C_+, \omega^{k}(\Im mz)dV_\alpha(z))$.
    \item[(b)] The operator $P_\beta^+$ is bounded on  $L^{p,q}(\C_+, \omega^{ k}(\Im mz)dV_\alpha(z))$.
    \item[(c)] The parameters satisfy
    \begin{equation}\label{eq:paramrelation}
        \alpha+1<q(\beta+1).
    \end{equation}
    .
    
\end{enumerate}
\end{theorem}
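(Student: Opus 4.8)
The plan is to prove the cycle $(b)\Rightarrow(a)\Rightarrow(c)\Rightarrow(b)$. The implication $(b)\Rightarrow(a)$ is immediate: since $|P_\beta f(z)|\le P_\beta^+(|f|)(z)$ pointwise and the mixed norm $\|\cdot\|_{\omega^k,\alpha,p,q}$ is monotone in $|f|$, boundedness of $P_\beta^+$ forces that of $P_\beta$. The real content is the equivalence of the integrability threshold with boundedness, and the decisive structural feature is that the kernel depends on the horizontal variable only through the difference $x-u$, namely $|z-\bar w|=((x-u)^2+(y+v)^2)^{1/2}$. This lets me separate the variables: freeze the heights $y,v$, treat the $x$-integration as a convolution, and reduce the whole problem to a one-dimensional integral operator in the height.

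For the sufficiency $(c)\Rightarrow(b)$, which is the heart of the matter, I would first apply Minkowski's integral inequality in $y$ together with Young's inequality in $x$. For fixed heights the horizontal kernel is $((x-u)^2+(y+v)^2)^{-(2+\beta)/2}$, whose $L^1(\R,dx)$-norm equals $C(y+v)^{-(1+\beta)}$ (using $\beta>-1$). Writing $F(v)=\|f(\cdot+iv)\|_{L^p_x}$, this bounds the horizontal $L^p$-profile of $P_\beta^+f$ by $S[F](y):=\int_0^\infty (y+v)^{-(1+\beta)}v^\beta F(v)\,dv$, so that $(b)$ follows once $S$ is shown bounded on $L^q((0,\infty),\omega^k y^\alpha\,dy)$. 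The kernel $(y+v)^{-(1+\beta)}v^\beta$ is homogeneous of degree $-1$, so I would run a Schur test with a power test function $h(y)=y^{-a}$. The only nonhomogeneous ingredient is the weight, and here I would use the slow-variation estimate
\[
\frac{\omega(y)}{\omega(v)}\lesssim 1+\left|\ln\frac{y}{v}\right|,
\]
which follows from $\Phi\in\mathscr{U}\cup\mathscr{L}$: the upper/lower-type inequality $\Phi(st)\le C\,t^{p}\Phi(s)$ turns $\ln_+\Phi(1/y)$ and $\ln_+\Phi(y)$ into the same expression at the comparison height plus a term linear in $|\ln(y/v)|$. After the substitution $v=ty$ both Schur conditions reduce to convergence of
\[
\int_0^\infty (1+t)^{-(1+\beta)}\,t^{\,\beta-(\alpha+1)/q}\,(1+|\ln t|)^{|k|/q}\,dt,
\]
whose behaviour at $t\to0$ demands exactly $q(\beta+1)>\alpha+1$ and at $t\to\infty$ demands $\alpha>-1$; since $(c)$ is a strict inequality there is room in the exponent of $t$ to absorb the logarithmic factor produced by the weight. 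This is precisely why the condition does not see $\omega$ or $k$.

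For the necessity $(a)\Rightarrow(c)$ I would argue by contraposition. The computation above shows the threshold $q(\beta+1)=\alpha+1$ is detected at the boundary $v\to0$. I would therefore test $P_\beta$ on functions whose horizontal $L^p$-profile concentrates near the boundary and mimics the critical power $v^{-(\alpha+1)/q}$ — concretely, applying $P_\beta$ to characteristic functions of Carleson boxes $Q$ shrinking towards a boundary point, using that for such $f$ one has $P_\beta f(z)\approx|Q|_\beta\,K_\beta(z,w_Q)$. When $\alpha+1\ge q(\beta+1)$ the vertical integral defining $\|P_\beta f\|_{\omega^k,\alpha,p,q}$ diverges while $\|f\|_{\omega^k,\alpha,p,q}$ stays finite, contradicting $(a)$. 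Again the slowly varying weight only inserts factors that are powers of $1+|\ln v|$, which cannot move the power-law threshold.

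The main obstacle I anticipate is twofold. First, establishing the slow-variation bound cleanly and uniformly for every admissible $\Phi$: one must treat the four sign choices $\vec\varepsilon\in\{0,1\}^2$ and both the upper-type and lower-type regimes, and verify that the constant in $\omega(y)/\omega(v)\lesssim 1+|\ln(y/v)|$ is independent of the heights. Second, if one retains the genuinely two-dimensional weight built from $\ln_+\Phi(|z|)$ rather than its height-only version, then in the horizontal step the weight depends on $x$, so Young's inequality must be upgraded to a weighted convolution (Schur) estimate in $x$; this is still governed by the same mechanism — the polynomial decay $|x-u|^{-(2+\beta)}$ with $2+\beta>1$ dominates the merely logarithmic growth of the weight ratio — but it requires an extra estimate, uniform in $(y,v)$, before the reduction to the one-dimensional operator $S$ can proceed.
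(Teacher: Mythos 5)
Your implications (b)$\Rightarrow$(a) and (c)$\Rightarrow$(b) are correct and are essentially the paper's own argument: the Minkowski/Young reduction of $P_\beta^+$ to the Hilbert-type operator $H_\beta F(y)=\int_0^\infty (y+v)^{-(1+\beta)}v^\beta F(v)\,dv$ acting on $L^q((0,\infty),\omega^k(y)y^\alpha dy)$ is the paper's first step, and the Schur test with a power test function is exactly how the paper proves its Theorem on $H_\beta$. The only difference is bookkeeping: you control the weight by the slow-variation bound $\omega(y)/\omega(v)\lesssim 1+\left|\ln(y/v)\right|$ and absorb the logarithm using the strict inequality, whereas the paper proves exact weighted Forelli--Rudin estimates, $\int_0^\infty \omega^{\pm k}(v)v^\beta (y+v)^{-(1+\alpha+\beta)}dv\simeq \omega^{\pm k}(y)y^{-\alpha}$, so the weight cancels identically in the Schur test; both mechanisms work. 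Two minor caveats: the Schur test requires $q>1$, so $q=1$ needs the separate Fubini argument the paper gives, and your second ``anticipated obstacle'' is vacuous, since in the theorem the weight is $\omega^k(\Im m\,z)$, a function of the height only.

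The necessity (a)$\Rightarrow$(c) is where your proposal breaks down: characteristic functions of Carleson boxes cannot detect the threshold. Take $f=\chi_Q$ with $Q=(-\ell/2,\ell/2)\times(0,\ell)$. By the paper's estimate $\int_0^t\omega^k(v)v^\alpha dv\simeq \omega^k(t)t^{1+\alpha}$, one has $\|f\|_{L^{p,q}(\omega^kdV_\alpha)}^q\simeq \ell^{q/p+\alpha+1}\omega^k(\ell)$. But $P_\beta f$ does not blow up near the boundary: for $y\le \ell$ one gets $\|(P_\beta f)_y\|_{L^p(\R)}\lesssim \ell^{1/p}\ln(e+\ell/y)$, which is integrable against $\omega^k(y)y^\alpha dy$ simply because $\alpha>-1$; and for $y\ge 2\ell$ your approximation $P_\beta f\approx |Q|_\beta K_\beta(\cdot,w_Q)$ gives $\|(P_\beta f)_y\|_{L^p(\R)}\simeq \ell^{2+\beta}\,y^{\frac1p-(2+\beta)}$. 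Hence the vertical integral you claim diverges is in fact finite (and $\|P_\beta\chi_Q\|\simeq\|\chi_Q\|$ uniformly in $\ell$) whenever $q/p-(2+\beta)q+\alpha+1<0$, i.e.\ whenever $\alpha+1<q(\beta+1)+q/p'$. So for every $p>1$ and parameters in the nonempty window $q(\beta+1)\le \alpha+1<q(\beta+1)+q/p'$, condition (c) fails yet your test produces no contradiction; moreover the divergence, when it does occur, is at $y\to\infty$, not at the boundary as your heuristic asserts. The loss comes precisely from Minkowski/Young being far from sharp on horizontally concentrated data, so the sharp threshold cannot be read off $P_\beta\chi_Q$.

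What the paper does instead, and what your argument is missing, is dualization with a weight-corrected test function: boundedness of $P_\beta$ on $L^{p,q}(\omega^{\pm k}dV_\alpha)$ gives boundedness of the adjoint $P_\beta^*$ on the dual mixed-norm space; testing $P_\beta^*$ on the single bump $f(w)=\omega^{\mp k}(\Im m\,w)\left(\Im m\,w\right)^{-\alpha}\chi_{B(i,1)}(w)$ and using the mean value property yields $P_\beta^* f(z)=c\,\omega^{\mp k}(\Im m\,z)\left(\Im m\,z\right)^{\beta-\alpha}(z+i)^{-(2+\beta)}$, and membership of this function in the dual space, read off from its behaviour as $\Im m\,z\to 0$, forces $(\beta-\alpha)q'+\alpha>-1$, which is (c). It is exactly the factors $\left(\Im m\,w\right)^{-\alpha}\omega^{\mp k}$ in the test function that transfer the question to the boundary, where the threshold lives; a plain characteristic function cannot do this. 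Finally, your closing claim that slowly varying factors ``cannot move the power-law threshold'' is too quick at the exact borderline $\alpha+1=q(\beta+1)$: there the relevant integral is $\int_0^1\omega^{k(1-q')}(y)y^{-1}dy$, and for $k>0$ a negative power of a logarithm can make this converge (e.g.\ $\Phi(t)=t$ and $k(q'-1)>1$), so the borderline case requires a genuinely separate argument --- this is precisely the delicate point the paper itself has to confront at the end of its proof.
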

\medskip

It is known that the boundedness of $P_\beta^+$ on $L_\alpha^{q}(\mathbb{C}_+)$ is equivalent to the boundedness of a corresponding Hilbert-type operator of the positive line (see for example \cite{BanSeh,Sehba}). The same observation can be made here. The Hilbert-type operator $H_\beta$ is defined by
$$H_\beta f(x)=\int_0^\infty\frac{f(y)y^\beta dy}{\left(x+y\right)^{1+\beta}}.$$
Let $\Omega$ be a weight on $(0,\infty)$, and $1\le p<\infty$. We denote by $L_\Omega^p((0,\infty))$ the set of all measurable functions $f$ such that
$$\Vert f\Vert_{\Omega,p}^p:=\int_0^\infty |f(x)|^p\Omega(x)dx<\infty.$$
In particular, when $\Omega(x)=x^\alpha$, $\alpha\in\mathbb{R}$, we use the notations $L_\alpha^p((0,\infty))=L^p((0,\infty), x^\alpha dx)$ for the corresponding space and $\|\cdot\|_{\alpha,p}$ for the corresponding norm.
\medskip

It is a well known that the operator $H_\beta$ is bounded on $L_\alpha^p((0,\infty))$ for any $1<p<\infty$ if and only if the relation $1+\alpha<p(1+\beta)$ holds (see for example \cite{BanSeh}). We extend this result to the weighted space $L^p((0,\infty), \omega(x)x^\alpha dx)$ for the above family of weights.
Our main results on the weighted boundedness of $H_\beta$ are given in the following.
\begin{theorem}\label{thm:main2}
Let $1\leq p<\infty$, $\alpha, \beta>-1$. Assume that $\Phi\in \mathscr{L}\cup\mathscr{U}$. Put $\omega=\mathcal{T}_\Phi^{\Vec{\varepsilon}}$, and let $k\in\mathbb{R}$. Then the following assertions are equivalent.
\begin{enumerate}
    \item[(a)] The operator $H_\beta$ is bounded on  $L^p((0,\infty), \omega^{k}(y)y^\alpha dy)$.
    \item[(b)] The parameters satisfy
    \begin{equation}\label{eq:paramrelation1}
        \alpha+1<p(\beta+1).
    \end{equation}
\end{enumerate}
\end{theorem}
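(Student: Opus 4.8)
The plan is to establish the two implications separately: the easy direction (b)$\Rightarrow$(a) by a Schur-type argument, and the contrapositive of (a)$\Rightarrow$(b) by explicit test functions. The organizing principle throughout is that $\omega=\mathcal T_\Phi^{\vec\varepsilon}$ is \emph{slowly varying}. Indeed, since $\Phi$ has finite upper type (if $\Phi\in\mathscr U$) or finite lower type (if $\Phi\in\mathscr L$), together with the monotonicity of $t\mapsto\Phi(t)/t$, one gets $\ln_+\Phi(t)\lesssim 1+\lvert\ln t\rvert$, whence $\omega(y)\lesssim 1+\lvert\ln y\rvert$ while $\omega\ge 1$. The one fact I would isolate as a preliminary lemma is the resulting Potter-type estimate: for every $\epsilon>0$ there is $C_\epsilon>0$ with
\[
\frac{\omega(y)^{k}}{\omega(x)^{k}}\le C_\epsilon\,\max\left\{\left(\tfrac{y}{x}\right)^{\epsilon},\left(\tfrac{x}{y}\right)^{\epsilon}\right\},\qquad x,y>0 .
\]
This quantifies that any power of $\omega$ is subordinate to every power $y^{\pm\epsilon}$, and it is exactly what makes the boundedness threshold insensitive to the weight.

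For the sufficiency (b)$\Rightarrow$(a) I would first conjugate away the weight: $H_\beta$ is bounded on $L^p((0,\infty),\omega^k y^\alpha\,dy)$ iff the kernel $K^{\ast}(x,y)=\omega(x)^{k/p}x^{\alpha/p}\,\frac{y^\beta}{(x+y)^{1+\beta}}\,\omega(y)^{-k/p}y^{-\alpha/p}$ is bounded on the unweighted $L^p((0,\infty),dy)$. To the latter I apply Schur's test with $\phi(y)=y^{-a}$. Dropping the weight, the two Schur inequalities reduce to Beta-type integrals $\int_0^\infty y^{s}(x+y)^{-(1+\beta)}\,dy\asymp x^{s-\beta}$, valid for $-1<s<\beta$, which hold with the correct homogeneity precisely when $a$ lies in an open interval; by the unweighted result recalled in the introduction this interval is nonempty exactly when $\alpha+1<p(\beta+1)$, so the strict inequality furnishes a margin of admissible exponents. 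The factor $\omega(y)^{-k/p}$ is then absorbed by the Potter lemma: bounding it by $C_\epsilon\,\omega(x)^{-k/p}\max\{(y/x)^{\epsilon|k|/p},(x/y)^{\epsilon|k|/p}\}$ splits each Schur integral into two pieces whose exponents are shifted by $\pm\epsilon|k|/p$; choosing $\epsilon$ small enough that the shifted exponents remain admissible, the integrals still converge and the pulled-out $\omega(x)^{-k/p}$ cancels the prefactor $\omega(x)^{k/p}$, yielding the Schur bounds. The endpoint $p=1$, where Schur degenerates, is handled directly by Tonelli's theorem (the column-sum criterion $\sup_y\int_0^\infty K^\ast(x,y)\,dx<\infty$), which leads to the same weighted Beta integral and the same threshold $\alpha<\beta$.

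For the necessity I argue by contraposition, assuming $\alpha+1\ge p(\beta+1)$ and exhibiting $f$ with $\|f\|_{L^p(\omega^k y^\alpha dy)}<\infty$ but $\|H_\beta f\|_{L^p(\omega^k y^\alpha dy)}=\infty$. When the inequality is strict, the test $f=\chi_{(0,1)}$ gives $H_\beta f(x)\asymp x^{-(1+\beta)}$ as $x\to\infty$, so $\|H_\beta f\|^p\gtrsim\int_1^\infty x^{\alpha-(1+\beta)p}\omega(x)^{k}\,dx$; here the power exponent exceeds $-1$ by a fixed positive amount, which beats the at-most-logarithmic size of $\omega^k$, forcing divergence for every $k$. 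The delicate case is the equality $\alpha+1=p(\beta+1)$, where for large negative $k$ the decaying weight can make this integral finite. To treat it uniformly in $k$ I pass to the logarithmic variable $y=e^t$: then $H_\beta$ becomes a convolution with kernel $e^{\gamma u}(1+e^u)^{-(1+\beta)}$, where $\gamma=(\beta+1)-(\alpha+1)/p$, acting on $L^p(\mathbb R,\widetilde\omega(t)^{k}\,dt)$ with $\widetilde\omega(t)=\omega(e^t)\asymp 1+\lvert t\rvert$; condition (b) is exactly integrability of this kernel, its failure being a non-decaying tail as $u\to-\infty$. Testing the convolution against $h(t)=t^{m}\chi_{(1,\infty)}(t)$ gives an output that is $\gtrsim t^{m+1}$ (or at least bounded below by a positive constant), because the kernel is bounded below on the negative axis; since the input-integrability threshold and the output-divergence threshold for $m$ differ by exactly one unit—coming from the extra factor in $\int_1^t s^m\,ds$—a valid $m$ can always be chosen, giving unboundedness for every $k$ and both choices of $\varepsilon_2$.

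The main obstacle, and indeed the whole content of the theorem, is the control of the logarithmic weight, and everything hinges on the Potter-type estimate above. In the sufficiency it is what allows the $\epsilon$-margin opened by the strict inequality to swallow $\omega^k$; in the necessity the same subordinacy is what prevents $\omega^k$ from repairing a failed power law, the only genuinely subtle point being the windowing argument required at the borderline $\alpha+1=p(\beta+1)$ for large negative $k$. I expect the verification of the Potter bound from the type conditions on $\Phi$, and the bookkeeping of the admissible exponent interval across all cases of $\vec\varepsilon$ and the sign of $k$, to be the most technical parts.
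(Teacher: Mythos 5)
Your proposal is correct, and it reaches the theorem by a route that is organized differently from the paper's, most visibly in the necessity direction. For sufficiency the underlying mechanism is the same --- Schur's test plus slow variation of $\omega$ --- but the bookkeeping differs: the paper never conjugates to unweighted $L^p$; it runs Schur's test directly with respect to the measure $\omega^{\pm k}(y)y^\alpha dy$ with the gauge $\phi(t)=t^{-(\alpha+1)/(pp')}$, and the weight is handled once and for all through weighted Forelli--Rudin estimates $\int_0^\infty \omega^{\pm k}(y)y^\beta(x+y)^{-(1+\alpha+\beta)}dy\simeq \omega^{\pm k}(x)x^{-\alpha}$ (Lemmas \ref{lem:FR1} and \ref{lem:FR2}), proved by dyadic decomposition from the estimate $\omega(2^jx)\lesssim j\,\omega(x)$. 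Your Potter bound is exactly the pointwise form of that estimate, and your conjugated Schur test with gauge $y^{-a}$ is equivalent to the paper's via the change of gauge $\phi^{*}=w^{1/(pp')}\phi$: the admissible exponent is $a=1/(pp')$, for which both of your Schur conditions reduce precisely to $0<(\alpha+1)/p<\beta+1$, so your interval-nonemptiness claim does check out. For $p=1$ your column-sum criterion is the paper's Fubini argument in different words. The genuine divergence is in necessity: the paper dualizes and tests $H_\beta^{*}$ on $\chi_{[1,2]}$ (in $L^{p'}$, resp.\ $L^\infty$), whereas you test $H_\beta$ itself with $\chi_{(0,1)}$, and at the borderline $\alpha+1=p(\beta+1)$ you pass to logarithmic variables and use windowed polynomial test functions $t^m\chi_{(1,\infty)}$. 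This buys real robustness: your borderline argument treats all signs of $k$ and all $\vec\varepsilon$ uniformly, whereas the paper's quick tests need case distinctions and, for instance at $p=1$ with weight $\omega^{-k}$, $k>0$, and $\varepsilon_1=0$, the paper's $L^\infty$-test only yields $\beta\ge\alpha$ (the factor $\omega^k(x)$ it discards stays bounded near $0$ in that case), so strictness there requires a tail estimate of precisely the kind your log-variable argument supplies. One caution: your Potter bound does not follow, as your ``whence'' suggests, from the two size bounds $1\le\omega(y)\lesssim 1+|\ln y|$ alone --- a weight oscillating between $1$ and $1+|\ln y|$ satisfies both yet violates Potter; what is needed is the ratio estimate $\omega(\lambda x)\lesssim(1+|\ln\lambda|)\,\omega(x)$, which comes from the type condition $\Phi(st)\le C t^{p_\Phi}\Phi(s)$ for $t\ge 1$ (the same computation as the paper's $\omega(2^jx)\lesssim j\,\omega(x)$). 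The lemma you state is true, but derive it from the type hypothesis on $\Phi$, not from the size bounds.
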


\medskip 

To prove the above result, we will prove a kind of weighted Forelli-Rudin type estimate adapted to our setting and then combine it with a Schur's lemma in the case $p>1$. 
\medskip
\begin{remark}
In general, to study the boundededness of operators with kernel on weighted Lebesgue spaces, one needs to develop a weighted theory as the ones of Muckenhoupt weights for Calder\'on-Zygmund operators (see for example \cite{GCRF}) or of the B\'ekoll\'e-Bonami weights for the Bergman operator (see \cite{Be1978,BeBo}).Our family of weights allows us to avoid this approach here and to reduce the question to a relation between the parameters $\alpha,\beta$ and the exponent $p$. The weighted theory for the operator $H_\beta$ (for general weights) will be considered elsewhere. 

\end{remark}
\medskip

The choice of the weights in this paper was inspired by \cite{BGS1} which also serves as motivation for this paper. Among our applications, we have an atomic decomposition of the above Bergman spaces which generalizes the work of J. Gonessa on the same topic in \cite{Gonessa}.
\medskip

The paper is organized as follows. In Section 2, we prove the result on the Hilbert-type operators. The boundedness of the Bergman projection on mixed norm spaces is proved in Section 3. In the last section, we provide some applications of our results.

As usual, we use the  the notation 
$A\lesssim B$ (respectively $A\gtrsim B$) whenever there exists a uniform constant $C>0$ such that $A\le CB$ (respectively $A\ge CB$). The notation $A\simeq B$ means that $A\lesssim B$ and $A\gtrsim B$.

\section{Boundedness of the Hilbert-type operator $H_\beta$}
In this section, we consider the boundedness of the operator $H_\beta$ on $L^p((0,\infty), \omega^{k}(y)y^\alpha dy)$. 
\subsection{Weighted estimates on the positive line}
In the following, we give estimates of the measure of any interval $(0,t)$ with respect to $\omega^{k}(y)y^\beta dy$.
\begin{lemma}\label{lem:omegainterval1}
Let $\beta>-1$. Suppose that $\Phi\in \mathscr{L}\cup\mathscr{U}$. Put $\omega=\mathcal{T}_\Phi^{\Vec{\varepsilon}}$, and let $k>0$. Then the following holds. 
\begin{equation}\label{eq:omegainterval1}
    \int_0^t\omega^{-k}(y)y^\beta dy\simeq \omega^{-k}(t)\,t^{1+\beta}.
\end{equation}
\end{lemma}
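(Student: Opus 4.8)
The plan is to establish the two inequalities separately, exploiting the decomposition of $\omega$ into its two monotone pieces. Write $A(y)=\ln_+\Phi(1/y)$ and $B(y)=\ln_+\Phi(y)$, so that $\omega=1+\varepsilon_1 A+\varepsilon_2 B$ with $A$ nonincreasing and $B$ nondecreasing on $(0,\infty)$. The first ingredient I would record is a one-sided doubling bound for $\omega$: from the upper-type inequality when $\Phi\in\mathscr{U}^p$ (resp. from the monotonicity of $t\mapsto\Phi(t)/t$ when $\Phi\in\mathscr{L}$) one obtains $\Phi(2s)\le C\Phi(s)$ for every $s>0$, so that both $A$ and $B$ change by at most an additive constant across any interval $[t/2,t]$. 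This yields $\sup_{[t/2,t]}\omega\lesssim\omega(t)$.

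For the lower bound I would simply discard most of the domain: since $k>0$,
\[
\int_0^t\omega^{-k}(y)y^\beta dy\ge\int_{t/2}^t\omega^{-k}(y)y^\beta dy\ge\Big(\sup_{[t/2,t]}\omega\Big)^{-k}\int_{t/2}^t y^\beta dy.
\]
Because $\int_{t/2}^t y^\beta dy\simeq t^{1+\beta}$ (as $\beta>-1$) and $\sup_{[t/2,t]}\omega\lesssim\omega(t)$, the right-hand side is $\gtrsim\omega^{-k}(t)t^{1+\beta}$.

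The upper bound is the substantive part. After the substitution $y=ts$, the desired estimate $\int_0^t\omega^{-k}(y)y^\beta dy\lesssim\omega^{-k}(t)t^{1+\beta}$ is equivalent to $\int_0^1\big(\omega(t)/\omega(ts)\big)^k s^\beta\,ds\lesssim 1$ uniformly in $t$. I would first eliminate the $\varepsilon_1$-term: since $A(ts)\ge A(t)$ and $B(ts)\le B(t)$ for $s\le1$, and since the elementary inequality $\frac{X+a}{X+b}\le\frac{1+a}{1+b}$ holds whenever $X\ge1$ and $0\le b\le a$ (cross-multiplication reduces it to $(X-1)(b-a)\le0$), the choice $X=1+\varepsilon_1 A(t)$, $a=B(t)$, $b=B(ts)$ gives
\[
\frac{\omega(t)}{\omega(ts)}\le\frac{1+\ln_+\Phi(t)}{1+\ln_+\Phi(ts)}.
\]
This collapses all three nontrivial choices of $\vec{\varepsilon}$ to one computation involving only $B$. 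Next I would use the type condition to bound $\Phi(ts)$ from below: for $\Phi\in\mathscr{U}^p$, writing $t=(ts)s^{-1}$ with $s^{-1}\ge1$ gives $\Phi(t)\le C_p s^{-p}\Phi(ts)$, hence $\ln\Phi(ts)\ge\ln\Phi(t)-p\ln(1/s)-\ln C_p$; for $\Phi\in\mathscr{L}$ the monotonicity of $\Phi(t)/t$ gives $\Phi(ts)\ge s\Phi(t)$, i.e. the same bound with $p=1$. I then split $(0,1)$ at $s_0\simeq\Phi(t)^{-1/(2p)}$: on $[s_0,1]$ the bound yields $1+\ln_+\Phi(ts)\gtrsim1+\ln_+\Phi(t)$, so the integrand is $\lesssim s^\beta$ and integrates to a constant, while on $(0,s_0)$ I bound the ratio crudely by $1+\ln_+\Phi(t)$ and integrate $s^\beta$ to produce a term comparable to $(1+\ln\Phi(t))^k\,\Phi(t)^{-(1+\beta)/(2p)}$. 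The case $\Phi(t)\lesssim1$ is trivial, since then $\ln_+\Phi\equiv0$ on $(0,t]$.

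The main obstacle is precisely this last estimate near $s=0$, equivalently for $y$ close to $0$, where the weight ratio $\big(\omega(t)/\omega(ts)\big)^k$ is large: one must simultaneously exploit the merely logarithmic dependence of $\omega$ on $\Phi$ (through the type inequality) and the integrability afforded by $\beta>-1$ in order to absorb it, the point being that the polynomial decay $\Phi(t)^{-(1+\beta)/(2p)}$ dominates the logarithmic factor $(1+\ln\Phi(t))^k$ uniformly in $t$ since $(1+\beta)/(2p)>0$.
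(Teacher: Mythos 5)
Your proof is correct, and its skeleton matches the paper's in two respects: the lower bound is obtained exactly as in the paper (restrict the integral to $(t/2,t)$ and use a doubling bound for $\omega$ coming from the type condition), and the upper bound rests on the same two ingredients, namely that the type condition limits how far $\omega(y)$ can drop below $\omega(t)$ for $y<t$ to a logarithmic loss, and that the integrability supplied by $\beta>-1$ absorbs that loss. The implementation of the upper bound, however, is genuinely different. The paper decomposes $(0,t)$ into dyadic shells $(2^{-n-1}t,2^{-n}t]$, uses the type condition to get $\omega(t)\lesssim (n+1)\,\omega(y)$ on the $n$-th shell, and concludes by summing $\sum_{n\ge 0}2^{-(n+1)(1+\beta)}(n+1)^k<\infty$; you instead normalize by $y=ts$ and make a single cut at $s_0\simeq \Phi(t)^{-1/(2p)}$, where the region $[s_0,1]$ costs only a constant and on $(0,s_0)$ the crude factor $\bigl(1+\ln_+\Phi(t)\bigr)^k$ is killed by the polynomial gain $\Phi(t)^{-(1+\beta)/(2p)}$. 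This trades the shell summation for one log-versus-power comparison; the two are comparable in length, but your split shows very explicitly where $\beta>-1$ enters. A second genuine difference is your reduction of all four choices of $\vec{\varepsilon}$ to the single term $1+\ln_+\Phi(t)$ via the inequality $\frac{X+a}{X+b}\le \frac{1+a}{1+b}$ for $X\ge 1$, $a\ge b\ge 0$: the paper treats only $\vec{\varepsilon}=(1,1)$ and asserts that the remaining cases ``follow the same way,'' whereas your one-line lemma makes that uniformity explicit. Note only that this reduction exploits the fact that the term $\ln_+\Phi(1/y)$ is monotone in the favorable direction for the $\omega^{-k}$ estimate, so for the companion bound of Lemma \ref{lem:omegainterval2} (the weight $\omega^{k}$) it would have to be replaced by a direct additive bound $\omega(ts)\lesssim (1+\ln(1/s))\,\omega(t)$, which your substitution framework also accommodates.
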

\begin{proof}
We restrict ourselves to the case $\Vec{\varepsilon}=(\varepsilon_1,\varepsilon_2)=(1,1)$ as the other cases follow the same way. Recall that in this case,
$$\omega(y)=1+\ln_+\left(\Phi\left(\frac{1}{y}\right)\right)+\ln_+\left(\Phi(y)\right),$$
and this is comparable to
$$\ln\left(e+\Phi\left(\frac{1}{y}\right)\right)+\ln\left(e+\Phi(y)\right).$$
We start by proving the upper estimate. If $0<t\le 1$, then the estimate above in (\ref{eq:omegainterval1}) follows directly from the fact that $\omega$ is nonincreasing in this interval. 
\medskip

More generally, let  $t>0$. Then using that $\Phi$ satisfies (\ref{uppertype}) or (\ref{lowertype}), and the notation
\begin{equation}\label{eq:pPhi} p_\Phi= \left\{\begin{array}{lcr}q & \mbox{if} & \Phi\in \mathscr{U}^q\\ 1 & \mbox{if} & \Phi\in \mathscr{L}_p.\end{array},\right.\end{equation}
we obtain
\begin{eqnarray*}
I &:=& \int_0^t\omega^{-k}(y)y^\beta dy\\ &=& \sum_{n=0}^\infty \int_{2^{-n-1}t}^{2^{-n}t}\frac{y^\beta dy}{\left[\ln\left(e+\Phi\left(\frac{1}{y}\right)\right)+\ln\left(e+\Phi(y)\right)\right]^k}\\ &\leq& \frac{(2^{1+\beta}-1)}{1+\beta}t^{1+\beta}\sum_{n=0}^\infty\frac{2^{-(n+1)(1+\beta)}}{\left[\ln\left(e+\Phi\left(\frac{2^n}{t}\right)\right)+\ln\left(e+\Phi\left(\frac{t}{2^{n+1}}\right)\right)\right]^k}\\ &\lesssim&  t^{1+\beta}\sum_{n=0}^\infty\frac{2^{-(n+1)(1+\beta)}\ln^k\left(2^{(n+1)p_\Phi}\right)}{\left[\ln\left(e+\Phi\left(\frac{1}{t}\right)\right)+\left[\ln\left(2^{(n+1)p_\Phi}\right)+\ln\left(e+\Phi\left(\frac{t}{2^{n+1}}\right)\right)\right]\right]^k}\\ &\lesssim& \frac{t^{1+\beta}\left(p_\Phi\ln 2\right)^k}{\omega^k(t)}\sum_{n=0}^\infty 2^{-(n+1)(1+\beta)}(n+1)^k\\ &\lesssim& \frac{t^{1+\beta}}{\omega^k(t)}.
\end{eqnarray*}
The lower inequality follows by considering the integral over $(\frac{t}{2},t)$. The proof is complete.
\end{proof}
We also have the following.
\begin{lemma}\label{lem:omegainterval2}
Let $\beta>-1$. Suppose that $\Phi\in \mathscr{L}\cup\mathscr{U}$. Put $\omega=\mathcal{T}_\Phi^{\Vec{\varepsilon}}$, and let $k>0$. Then the following 
\begin{equation}\label{eq:omegainterval2}
    \int_0^t\omega^{k}(y)y^\beta dy\simeq \omega^{k}(t)\,t^{1+\beta}.
\end{equation}
\end{lemma}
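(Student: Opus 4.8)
The plan is to mirror the argument used for Lemma~\ref{lem:omegainterval1}, replacing the decaying weight $\omega^{-k}$ by the growing weight $\omega^{k}$; the only structural change is that on each dyadic piece I now bound $\omega^{k}$ from \emph{above} rather than from below. As there, I restrict to $\vec{\varepsilon}=(1,1)$, so that
$$\omega(y)\simeq\ln\left(e+\Phi\left(\frac1y\right)\right)+\ln\left(e+\Phi(y)\right),$$
the remaining cases being handled identically (and being easier when one of the $\varepsilon_i$ vanishes).

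For the upper estimate I split $(0,t)=\bigcup_{n\ge0}\left(2^{-n-1}t,2^{-n}t\right)$. On the $n$-th ring one has $\Phi\left(\frac1y\right)\le\Phi\left(\frac{2^{n+1}}{t}\right)\lesssim 2^{(n+1)p_\Phi}\Phi\left(\frac1t\right)$ by the type condition \eqref{eq:sui8n} (with $p_\Phi$ as in \eqref{eq:pPhi}), while $\Phi(y)\le\Phi(t)$ by monotonicity. Inserting these into the logarithms and using $\ln(e+ab)\le\ln(1+a)+\ln(e+b)$ together with $\omega(t)\ge1$, I obtain $\omega(y)\lesssim(n+1)\,\omega(t)$, hence $\omega^{k}(y)\lesssim(n+1)^{k}\omega^{k}(t)$ on that ring. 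Since $\int_{2^{-n-1}t}^{2^{-n}t}y^\beta\,dy\simeq 2^{-n(1+\beta)}t^{1+\beta}$, summation yields
$$\int_0^t\omega^{k}(y)y^\beta\,dy\lesssim \omega^{k}(t)\,t^{1+\beta}\sum_{n\ge0}(n+1)^{k}2^{-n(1+\beta)}\lesssim\omega^{k}(t)\,t^{1+\beta},$$
the series converging precisely because $\beta>-1$.

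For the lower estimate I discard all of $(0,t)$ except the top ring $\left(\frac t2,t\right)$, where $y^\beta\simeq t^\beta$. I claim $\omega(y)\simeq\omega(t)$ there: the bound $\omega(y)\lesssim\omega(t)$ is the $n=0$ instance above, while for the reverse I use $\Phi\left(\frac1y\right)\ge\Phi\left(\frac1t\right)$ together with $\Phi(y)\ge\Phi\left(\frac t2\right)\gtrsim\Phi(t)$ (again from \eqref{eq:sui8n}), which gives $\omega(y)\ge\omega(t)-C_0$ for a fixed $C_0$; combining this with $\omega(y)\ge1$ forces $\omega(y)\gtrsim\omega(t)$ in both regimes $\omega(t)\le2C_0$ and $\omega(t)>2C_0$. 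Consequently $\int_{t/2}^t\omega^{k}(y)y^\beta\,dy\simeq\omega^{k}(t)t^{1+\beta}$, which is the required lower bound.

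The only genuinely delicate point is the ring-wise upper bound $\omega^{k}(y)\lesssim(n+1)^{k}\omega^{k}(t)$: what makes it work is that the type hypothesis on $\Phi$ controls $\Phi(2^{n+1}/t)$ by $2^{(n+1)p_\Phi}\Phi(1/t)$, so that after taking logarithms the extra factor grows only polynomially in $n$ and is therefore absorbed by the geometric factor $2^{-n(1+\beta)}$ produced by $\beta>-1$. Everything else is the same bookkeeping as in Lemma~\ref{lem:omegainterval1}.
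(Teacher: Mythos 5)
Your proof is correct and follows essentially the same route as the paper's: the same dyadic decomposition of $(0,t)$, the same use of the type condition \eqref{eq:sui8n} to get the ring-wise bound $\omega(y)\lesssim (n+1)\,\omega(t)$ absorbed by the geometric factor $2^{-n(1+\beta)}$, and the same lower bound via the top ring $(t/2,t)$. Your treatment of the lower estimate (showing $\omega(y)\gtrsim\omega(t)$ there) is in fact more detailed than the paper, which simply refers back to Lemma~\ref{lem:omegainterval1}.
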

\begin{proof}
The lower estimate is obtained as above. For the upper estimate, we again restrict to the weight
$$\omega(y)=\ln\left(e+\Phi\left(\frac{1}{y}\right)\right)+\ln\left(e+\Phi(y)\right).$$
If $t\geq 1$, then the upper estimate in (\ref{eq:omegainterval2}) follows directly from the fact that $\omega$ is nonincreasing in this range.
\medskip

More generally, let $t>0$. We obtain
\begin{eqnarray*}
I &:=& \int_0^t\omega^{k}(y)y^\beta dy\\ &=& \sum_{n=0}^\infty \int_{2^{-n-1}t}^{2^{-n}t}\left[\ln\left(e+\Phi\left(\frac{1}{y}\right)\right)+\ln\left(e+\Phi(y)\right)\right]^ky^\beta dy\\ &\lesssim& (2^{1+\beta}-1)t^{1+\beta}\sum_{n=0}^\infty2^{-(n+1)(1+\beta)}\left[\ln\left(e+\Phi\left(\frac{2^{n+1}}{t}\right)\right)+\ln\left(e+\Phi(2^{-n}t)\right)\right]^k\\ &\lesssim&  t^{1+\beta}\sum_{n=0}^\infty 2^{-(n+1)(1+\beta)}\left[(n+1)p_\Phi\ln2 +\ln\left(e+\Phi\left(\frac{1}{t}\right)\right)+\ln\left(e+\Phi(t)\right)\right]^k\\ &\leq& t^{1+\beta}\left(p_\Phi\right)^k\omega^k(t)\sum_{n=0}^\infty 2^{-(n+1)(1+\beta)}(n+1)^k\\ &\lesssim& t^{1+\beta}\omega^k(t).
\end{eqnarray*}
The proof is complete.
\end{proof}
We have the following estimate.
\begin{lemma}\label{lem:FR1}
Let $\alpha>0$, and $\beta>-1$. Suppose that $\Phi\in \mathscr{L}\cup\mathscr{U}$. Put $\omega=\mathcal{T}_\Phi^{\Vec{\varepsilon}}$, and let $k>0$. Then the following holds
\begin{equation}\label{eq:FR1}
    \int_0^\infty\frac{\omega^{-k}(y)y^\beta dy}{\left(x+y\right)^{1+\alpha+\beta}}\simeq \omega^{-k}(x)\,x^{-\alpha}, x>0.
\end{equation}
\end{lemma}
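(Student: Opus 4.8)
The plan is to split the integral at the scale $y=x$ and to treat the two regions by different mechanisms, in both cases reducing the weight to its value at $x$ by means of the type properties of $\Phi$ already exploited in the proof of Lemma~\ref{lem:omegainterval1}. Write
$$I(x)=\int_0^\infty\frac{\omega^{-k}(y)y^\beta}{(x+y)^{1+\alpha+\beta}}\,dy=I_1(x)+I_2(x),$$
where $I_1$ denotes the integral over $(0,x)$ and $I_2$ the integral over $(x,\infty)$.

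For the near region $0<y<x$ one has $x\le x+y\le 2x$, hence $(x+y)^{1+\alpha+\beta}\simeq x^{1+\alpha+\beta}$, and therefore $I_1(x)\simeq x^{-(1+\alpha+\beta)}\int_0^x\omega^{-k}(y)y^\beta\,dy$. Applying Lemma~\ref{lem:omegainterval1} to the remaining integral gives $I_1(x)\simeq x^{-(1+\alpha+\beta)}\,\omega^{-k}(x)\,x^{1+\beta}=\omega^{-k}(x)\,x^{-\alpha}$. Since $I(x)\ge I_1(x)$, this already yields the lower bound in \eqref{eq:FR1}, so it remains only to establish that the far part satisfies $I_2(x)\lesssim\omega^{-k}(x)\,x^{-\alpha}$.

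For the far region $y>x$ one has $y\le x+y\le 2y$, so $(x+y)^{1+\alpha+\beta}\simeq y^{1+\alpha+\beta}$ and $I_2(x)\simeq\int_x^\infty\omega^{-k}(y)\,y^{-1-\alpha}\,dy$. I would estimate this tail by the dyadic decomposition $\int_x^\infty=\sum_{n\ge 0}\int_{2^nx}^{2^{n+1}x}$, using on each annulus that $y^{-1-\alpha}\simeq(2^nx)^{-1-\alpha}$ and that its length is $\simeq 2^nx$. The only point requiring the structure of $\omega$ is an upper bound for $\omega^{-k}(y)$ on $[2^nx,2^{n+1}x]$; exactly as in the proof of Lemma~\ref{lem:omegainterval1}, the upper/lower-type inequality for $\Phi$ gives $\ln\bigl(e+\Phi(2^{\pm n}x)\bigr)\lesssim (n+1)p_\Phi\ln 2+\ln\bigl(e+\Phi(x)\bigr)$, whence $\omega(y)\gtrsim\omega(x)/(n+1)$ and so $\omega^{-k}(y)\lesssim(n+1)^k\,\omega^{-k}(x)$ there. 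Summing, $I_2(x)\lesssim\omega^{-k}(x)\,x^{-\alpha}\sum_{n\ge 0}2^{-n\alpha}(n+1)^k$, and the series converges because $\alpha>0$.

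The main obstacle is precisely this far-region estimate. In contrast with Lemma~\ref{lem:omegainterval1}, where the summable geometric factor $2^{-(n+1)(1+\beta)}$ was produced by $y^\beta$ near the origin (using $\beta>-1$), here the convergence of the dyadic series is supplied by the decay $y^{-\alpha}$ at infinity and therefore hinges on the hypothesis $\alpha>0$; this decay is what absorbs the polynomial growth $(n+1)^k$ coming from the slow (logarithmic) variation of $\omega$. As in the previous lemmas I would carry out the computation only for $\vec\varepsilon=(1,1)$ and for the comparable weight $\omega(y)=\ln\bigl(e+\Phi(1/y)\bigr)+\ln\bigl(e+\Phi(y)\bigr)$, the remaining choices of $\vec\varepsilon$ and the two type classes $\mathscr{U}$, $\mathscr{L}$ being handled in the same fashion.
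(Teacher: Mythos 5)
Your proof is correct and follows essentially the same route as the paper's: a dyadic decomposition, Lemma~\ref{lem:omegainterval1} for the region near the origin, the type inequality for $\Phi$ yielding $\omega(y)\gtrsim \omega(x)/(n+1)$ on the annulus $[2^nx,2^{n+1}x]$ (the paper states this as $\omega(2^jx)\gtrsim \omega(x)/\ln(2^{jp_\Phi})$), and convergence of $\sum_n 2^{-n\alpha}(n+1)^k$ from the hypothesis $\alpha>0$. The only cosmetic difference is bookkeeping: the paper decomposes all of $(0,\infty)$ dyadically and applies Lemma~\ref{lem:omegainterval1} to each cumulative integral $\int_0^{2^jx}$, whereas you split once at $y=x$ and handle the tail with direct annulus estimates; both arguments are otherwise identical, including the lower bound coming from the near region.
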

\begin{proof}
Again we only present the proof for $\Vec{\varepsilon}=(1,1)$. For $x>0$ fixed, let $I_0=(0,x]$. For $j\in \mathbb{N}$, let $I_j=(0,2^jx]$. Then define $J_0=I_0$, and for $j\geq 1$, $J_j=I_j\setminus I_{j-1}$.
\medskip

We obtain using Lemma \ref{lem:omegainterval1} that
\begin{eqnarray*}
L &:=&  \int_0^\infty\frac{\omega^{-k}(y)y^\beta dy}{\left(x+y\right)^{1+\alpha+\beta}}\\ &=& \sum_{j=0}^\infty  \int_{J_j}\frac{\omega^{-k}(y)y^\beta dy}{\left(x+y\right)^{1+\alpha+\beta}}\\ &\leq& x^{-(1+\alpha+\beta)}\sum_{j=0}^\infty 2^{-(j-1)(1+\alpha+\beta)} \int_{I_j}\omega^{-k}(y)y^\beta dy\\ &\lesssim& x^{-(1+\alpha+\beta)}\sum_{j=0}^\infty 2^{-(j-1)(1+\alpha+\beta)}\frac{\left(2^jx\right)^{1+\beta}}{\omega^k(2^jx)}.
\end{eqnarray*}
If $x\geq 1$, then we directly have that $\omega(2^jx)>\omega(x)$. If $0<x<1$, then for any $j>0,$ we obtain
\begin{eqnarray*}
\omega(2^jx)&=& \ln\left(e+\Phi\left(2^jx\right)\right)+\ln\left(e+\Phi\left(\frac{1}{2^jx}\right)\right)\\ &\gtrsim& \frac{\ln\left(e+\Phi(x)\right)+\ln\left(2^{jp_\Phi}\right)+\ln\left(e+\Phi\left(\frac{1}{2^jx}\right)\right)}{\ln\left(2^{jp_\Phi}\right)}\\ &\gtrsim& \frac{\ln\left(e+\Phi(x)\right)+\ln\left(e+\Phi\left(\frac{1}{x}\right)\right)}{\ln\left(2^{jp_\Phi}\right)}.
\end{eqnarray*}
Hence we finally obtain
\begin{eqnarray*}
L &\lesssim& \omega^{-k}(x)\,x^{-\alpha}\sum_{j=0}^\infty 2^{-(j-1)\alpha}j^k\\ &\lesssim& \omega^{-k}(x)\,x^{-\alpha}.
\end{eqnarray*}
The lower estimate is obtained by taking the integral from $\frac{x}{2}$ to $x$. The proof is complete.
\end{proof}
We have the following.
\begin{lemma}\label{lem:FR2}
Let $\alpha>0$, and $\beta>-1$. Suppose that $\Phi\in \mathscr{L}\cup\mathscr{U}$. Put $\omega=\mathcal{T}_\Phi^{\Vec{\varepsilon}}$, and let $k>0$. Then the following holds 
\begin{equation}\label{eq:FR2}
    \int_0^\infty\frac{\omega^{k}(y)y^\beta dy}{\left(x+y\right)^{1+\alpha+\beta}}\simeq \omega^{k}(x)\,x^{-\alpha}, x>0.
\end{equation}
\end{lemma}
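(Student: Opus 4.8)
The plan is to follow the architecture of the proof of Lemma \ref{lem:FR1} almost verbatim, the only structural changes being that the positive power $\omega^{k}$ forces us to replace Lemma \ref{lem:omegainterval1} by Lemma \ref{lem:omegainterval2} and, crucially, to replace the \emph{lower} bound on $\omega(2^{j}x)$ used there by an \emph{upper} bound. As in that proof, it suffices to treat $\vec{\varepsilon}=(1,1)$, so that up to comparability $\omega(y)=\ln(e+\Phi(1/y))+\ln(e+\Phi(y))$, the other cases being handled identically.

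First I would fix $x>0$ and decompose the positive line into the dyadic shells attached to $x$: with $I_{j}=(0,2^{j}x]$ and $J_{0}=I_{0}$, $J_{j}=I_{j}\setminus I_{j-1}$ for $j\geq 1$, one writes $L=\sum_{j\geq 0}\int_{J_{j}}\frac{\omega^{k}(y)y^{\beta}}{(x+y)^{1+\alpha+\beta}}\,dy$. On each $J_{j}$ one has $x+y\gtrsim 2^{j}x$, so that $(x+y)^{-(1+\alpha+\beta)}\lesssim (2^{j}x)^{-(1+\alpha+\beta)}$; enlarging the domain of integration from $J_{j}$ to $I_{j}$ (legitimate since the integrand is nonnegative) and invoking Lemma \ref{lem:omegainterval2}, namely $\int_{0}^{2^{j}x}\omega^{k}(y)y^{\beta}\,dy\simeq \omega^{k}(2^{j}x)(2^{j}x)^{1+\beta}$, collapses all the powers of $2^{j}$ and of $x$ and reduces the upper estimate to controlling
$$L\lesssim x^{-\alpha}\sum_{j\geq 0}2^{-j\alpha}\,\omega^{k}(2^{j}x).$$

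The decisive step is then the pointwise bound $\omega(2^{j}x)\lesssim (j+1)\,\omega(x)$. For the term $\ln(e+\Phi(2^{j}x))$ I would use that $2^{j}\geq 1$ together with the type hypothesis in the form $\Phi(2^{j}x)\lesssim 2^{jp_{\Phi}}\Phi(x)$, with $p_{\Phi}$ as in \eqref{eq:pPhi} (this is exactly the upper-type inequality when $\Phi\in\mathscr{U}$, and follows from the non-increase of $t\mapsto \Phi(t)/t$ when $\Phi\in\mathscr{L}$); this gives $\ln(e+\Phi(2^{j}x))\lesssim jp_{\Phi}\ln 2+\ln(e+\Phi(x))$. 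For the term $\ln(e+\Phi(1/(2^{j}x)))$ the monotonicity of $\Phi$ alone suffices, since $1/(2^{j}x)\leq 1/x$ yields $\Phi(1/(2^{j}x))\leq \Phi(1/x)$. Adding the two and using $\omega(x)\geq 2$ to absorb the additive term $jp_{\Phi}\ln 2$ produces $\omega(2^{j}x)\lesssim (j+1)\omega(x)$, hence $\omega^{k}(2^{j}x)\lesssim (j+1)^{k}\omega^{k}(x)$. Substituting and using $\alpha>0$ to sum the convergent series $\sum_{j\geq 0}2^{-j\alpha}(j+1)^{k}$ gives $L\lesssim\omega^{k}(x)\,x^{-\alpha}$. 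The matching lower bound is obtained, exactly as in Lemma \ref{lem:FR1}, by restricting the integral to $(x/2,x)$, where $(x+y)^{1+\alpha+\beta}\leq (2x)^{1+\alpha+\beta}$ and $\omega(y)y^{\beta}\gtrsim \omega(x)x^{\beta}$.

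I expect the only genuine obstacle to be the direction of the weight comparison: whereas Lemma \ref{lem:FR1} needs a lower bound on $\omega(2^{j}x)$ in order to bound $\omega^{-k}$ from above, here we need the opposite inequality $\omega(2^{j}x)\lesssim (j+1)\omega(x)$. This is precisely where the type condition does the work, converting the multiplicative dilation $2^{j}$ into the additive logarithmic growth $jp_{\Phi}\ln 2$, which is then dominated by $(j+1)\omega(x)$; the ensuing summation is routine, since the geometric factor $2^{-j\alpha}$ (with $\alpha>0$) beats the polynomial factor $(j+1)^{k}$.
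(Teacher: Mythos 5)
Your proposal is correct and follows essentially the same route as the paper: the paper's proof of Lemma \ref{lem:FR2} is exactly the dyadic-shell argument of Lemma \ref{lem:FR1} with Lemma \ref{lem:omegainterval2} in place of Lemma \ref{lem:omegainterval1}, and with the key new ingredient being the upper bound $\omega(2^{j}x)\lesssim j\,\omega(x)$ obtained from the type condition (for the $\Phi(2^{j}x)$ term) and monotonicity (for the $\Phi(1/(2^{j}x))$ term), precisely as you argue. Your write-up is in fact somewhat more detailed than the paper's, which simply states this estimate and says the rest follows as before.
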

\begin{proof}
The proof is essentially the same as above. One only needs the upper estimate of $\omega(2^jx)$, $j>0$. If $x>0$, then 
\begin{eqnarray*}\omega(2^jx) &\lesssim& \ln\left(e+\Phi\left(\frac{1}{x}\right)\right)+\ln\left(2^{jp_\Phi}\right)+\ln\left(e+\Phi(x)\right)\\ &=& \ln\left(e+\Phi\left(\frac{1}{x}\right)\right)+jp_\Phi\ln 2+\ln\left(e+\Phi(x)\right)\\ &\lesssim& j\omega(x).\end{eqnarray*}
The estimates then follow as in the previous lemma.
\end{proof}
\subsection{Proofs of Theorem \ref{thm:main2}}
We start by considering the case $p=1$. We prove the following.
\begin{theorem}\label{thm:q1}
Let  $\alpha, \beta>-1$. Assume that $\Phi\in \mathscr{L}\cup\mathscr{U}$. Put $\omega=\mathcal{T}_\Phi^{\Vec{\varepsilon}}$, and let $k\in\mathbb{R}$. Then the following assertions are equivalent.
\begin{enumerate}
    \item[(a)] The operator $H_\beta$ is bounded on  $L^1((0,\infty), \omega^{ k}(y)y^\alpha dy)$.
    \item[(b)] The parameters satisfy
    $$\beta>\alpha.$$
\end{enumerate}
\end{theorem}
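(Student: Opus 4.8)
The plan is to exploit the special structure of $L^1$ boundedness for operators with a positive kernel, which collapses the problem to a single pointwise estimate that the Forelli--Rudin type Lemmas \ref{lem:FR1} and \ref{lem:FR2} are tailor-made to settle. Writing $d\mu(y)=\omega^k(y)y^\alpha dy$ and using that the kernel $\frac{y^\beta}{(x+y)^{1+\beta}}$ is nonnegative, Tonelli's theorem gives, for every $f\ge 0$,
\[
\int_0^\infty H_\beta f(x)\,d\mu(x)=\int_0^\infty f(y)\,\Psi(y)\,d\mu(y),\qquad \Psi(y):=\frac{y^{\beta-\alpha}}{\omega^k(y)}\int_0^\infty\frac{\omega^k(x)x^\alpha}{(x+y)^{1+\beta}}\,dx .
\]
Hence $H_\beta$ is bounded on $L^1(d\mu)$ if and only if $\Psi\in L^\infty((0,\infty))$ (indeed its operator norm equals $\|\Psi\|_\infty$), so everything comes down to deciding when $\Psi$ is bounded. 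As throughout this section, I would carry out the computation for $\vec\varepsilon=(1,1)$, the remaining choices of $\vec\varepsilon$ being handled in the same way.

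For the implication (b)$\Rightarrow$(a), I assume $\beta>\alpha$ and match parameters in the integral defining $\Psi$ with Lemma \ref{lem:FR2} (for $k>0$) or Lemma \ref{lem:FR1} (for $k<0$; the case $k=0$ is the classical unweighted statement), taking the role of $\alpha$ in the lemma to be $\beta-\alpha>0$ and the role of $\beta$ to be $\alpha>-1$. This substitution is legitimate precisely because $\beta-\alpha>0$, and it yields $\int_0^\infty\frac{\omega^k(x)x^\alpha}{(x+y)^{1+\beta}}\,dx\simeq \omega^k(y)\,y^{\alpha-\beta}$, so that $\Psi(y)\simeq 1$ and $\Psi\in L^\infty$.

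For (a)$\Rightarrow$(b) I argue by contraposition and show that $\beta\le\alpha$ forces $\Psi$ to be unbounded. When $\beta<\alpha$ the integrand is comparable to $\omega^k(x)x^{\alpha-\beta-1}$ for large $x$; since $\alpha-\beta-1>-1$ and a logarithmic factor cannot affect the convergence of a power integral with exponent exceeding $-1$, the integral diverges and $\Psi\equiv+\infty$. The delicate case is the endpoint $\beta=\alpha$, where $y^{\beta-\alpha}=1$ and I split according to the sign of $k$. If $k\ge 0$, then $\omega\gtrsim 1$ gives $\int_0^\infty\frac{\omega^k(x)x^\alpha}{(x+y)^{1+\alpha}}\,dx\gtrsim\int_{\max(y,1)}^\infty\frac{dx}{x}=+\infty$, so again $\Psi\equiv+\infty$. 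If $k<0$, the integral is finite, yet boundedness still fails near the origin: restricting to $x\in(1,2)$ shows $\int_0^\infty\frac{\omega^k(x)x^\alpha}{(x+y)^{1+\alpha}}\,dx\gtrsim 1$ uniformly for $y\in(0,1]$, while $\omega(y)\to\infty$ as $y\to 0^+$ (because $\Phi(1/y)\to\infty$) forces $\omega^k(y)\to 0$; hence $\Psi(y)\gtrsim\omega^{|k|}(y)\to\infty$. In every case $\Psi\notin L^\infty$, so $H_\beta$ is unbounded.

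I expect the main obstacle to be the endpoint $\beta=\alpha$, and within it the subcase $k<0$: there the defining integral is perfectly convergent, so the failure of boundedness is invisible at the level of the integral and is seen only through the degeneration $\omega^k(y)\to 0$ as $y\to 0$. Securing a clean uniform lower bound for the integral in that regime (for which the localization to $x\in(1,2)$, or alternatively the interval estimates of Lemmas \ref{lem:omegainterval1} and \ref{lem:omegainterval2}, is convenient) is the one step requiring genuine care; the remaining cases are routine bookkeeping with the Forelli--Rudin estimates.
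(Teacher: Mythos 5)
Your overall strategy is sound and is essentially a more systematic packaging of the paper's own proof: the paper likewise gets (b)$\Rightarrow$(a) from Fubini's theorem plus Lemmas \ref{lem:FR1} and \ref{lem:FR2}, and gets (a)$\Rightarrow$(b) by duality (boundedness of the adjoint $H_\beta^*$ on $L^\infty$) together with testing on $f=\chi_{[1,2]}$. Your function $\Psi$ is exactly $H_\beta^*$ applied to the constant function $1$, so your reduction ``$H_\beta$ bounded on $L^1(d\mu)$ iff $\Psi\in L^\infty$'' is correct and neatly unifies the two directions; your sufficiency computation (substituting $\beta-\alpha>0$ and $\alpha>-1$ into the Forelli--Rudin lemmas) coincides with the paper's, and your divergence arguments for $\beta<\alpha$ and for $\beta=\alpha$, $k\ge 0$ are fine.

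There is, however, a genuine gap in precisely the subcase you flag as delicate. Your argument for $\beta=\alpha$, $k<0$ hinges on $\omega(y)\to\infty$ as $y\to 0^+$, which uses $\varepsilon_1=1$; the claim that ``the remaining choices of $\vec{\varepsilon}$ are handled in the same way'' fails for $\vec{\varepsilon}=(0,1)$. There $\omega(y)=1+\ln_+\Phi(y)\to 1$ as $y\to 0^+$, so $\omega^{|k|}(y)$ does not blow up at the origin, and when $k<-1$ the integral defining $\Psi$ even converges, so $\Psi$ is bounded near $0$: your mechanism produces nothing. Unboundedness in this subcase must instead be detected as $y\to\infty$ via a tail estimate: since $\omega(x)\simeq \ln x$ at infinity, the head $\int_0^y$ contributes $\simeq\omega^{-|k|}(y)$ (Lemma \ref{lem:omegainterval1}), while the tail contributes
\begin{equation*}
\int_y^\infty \frac{dx}{x\left(\ln x\right)^{|k|}}\simeq \left(\ln y\right)^{1-|k|}
\end{equation*}
(divergent when $|k|\le 1$), whence $\Psi(y)\simeq 1+\ln y\to\infty$. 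So the statement does hold there, but by a different computation from the one you propose. A related minor slip: ``if $k<0$ the integral is finite'' is false for $-1\le k<0$ (the tail above diverges); this is harmless, since your lower bound never uses finiteness. For what it is worth, the paper's own necessity argument has the same blind spot: testing $H_\beta^*$ on $\chi_{[1,2]}$ only shows that $x^{\beta-\alpha}$ is bounded on $(0,1)$, which yields $\beta\ge\alpha$ rather than $\beta>\alpha$ when $\varepsilon_1=0$, and ruling out the endpoint $\beta=\alpha$ in that case requires exactly the kind of tail analysis sketched above.
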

\begin{proof}
That (b) implies (a) follows from Fubini's theorem and Lemma \ref{lem:FR1} and Lemma \ref{lem:FR2}. 
\medskip 

Conversely, suppose that $H_\beta$ is bounded on  $L^1((0,\infty), \omega^{-k}(y)y^\alpha dy)$, $k>0$. Then its adjoint $H_\beta^*$ defined by
$$H_\beta^*f(x)=\omega^k(x)x^{\beta-\alpha}\int_0^\infty\frac{f(y)\omega^{-k}(y)y^\alpha dy}{\left(x+y\right)^{1+\beta}}$$
is bounded on $L^\infty((0,\infty))$. Take $f(y)=\chi_{[1,2]}(y)$. Then
$$H_\beta^*f(x)\simeq\frac{\omega^k(x)x^{\beta-\alpha}}{\left(x+1\right)^{1+\beta}}.$$
Clearly, for $0<x<1$, one has that $H_\beta^*f(x)\gtrsim x^{\beta-\alpha}.$
Hence the boundedness of $H_\beta^*f(x)$ implies that $x^{\beta-\alpha}$ is bounded on $(0,1)$. Thus, necessarily, $\beta-\alpha>0$. 
\medskip

Now, suppose that $H_\beta$ is bounded on  $L^1((0,\infty), \omega^{k}(y)y^\alpha dy)$, $k>0$ i.e there exists a constant $C>0$ such that for any $f\in L^1((0,\infty), \omega^{k}(y)y^\alpha dy)$,

\begin{equation}\label{eq:testhilbert1}
    \int_0^\infty |H_\beta f(x)|\omega^{k}(x)x^\alpha dx\le C\int_0^\infty |f(x)|\omega^{k}(x)x^\alpha dx
\end{equation}
Take $f(y)=\chi_{[1,2]}(y)$. Then $$H_\beta f(x)\simeq \frac{1}{(x+1)^{1+\beta}}$$ and 
$$\int_0^\infty |f(x)|\omega^{k}(x)x^\alpha dx\simeq 1.$$
Taking this into (\ref{eq:testhilbert1}), we obtain
$$ \int_0^\infty\frac{x^\alpha}{(x+1)^{1+\beta}}dx\le\int_0^\infty\frac{\omega^k(x)x^\alpha}{(x+1)^{1+\beta}}dx<\infty.$$
This necessarily implies that $1+\beta>\alpha+1$.
The proof is complete.
\end{proof}

We next consider the case $p>1$.
\begin{theorem}\label{thm:qlarge}
Let $1<p<\infty$, and  $\alpha, \beta>-1$. Assume that $\Phi\in \mathscr{L}\cup\mathscr{U}$. Put $\omega=\mathcal{T}_\Phi^{\Vec{\varepsilon}}$, and let $k\in\mathbb{R}$. Then the following assertions are equivalent.
\begin{enumerate}
    \item[(a)] The operator $H_\beta$ is bounded on  $L^p((0,\infty), \omega^{k}(y)y^\alpha dy)$.
    \item[(b)] The parameters satisfy
    (\ref{eq:paramrelation1}).
\end{enumerate}
\end{theorem}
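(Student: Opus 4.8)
The plan is to prove Theorem~\ref{thm:qlarge} via a Schur's lemma argument for the direction (b)$\Rightarrow$(a), and a test-function argument for the necessity direction (a)$\Rightarrow$(b), mirroring the structure used in Theorem~\ref{thm:q1} but now exploiting the full strength of H\"older's inequality available for $p>1$.

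For the sufficiency direction (b)$\Rightarrow$(a), I would set up Schur's test with a suitable power weight. Recall that to show $H_\beta$ is bounded on $L^p((0,\infty),\omega^k(y)y^\alpha dy)$ it suffices to find a positive auxiliary function $h$ and constants $C_1,C_2$ such that, writing the kernel as $\mathcal{K}(x,y)=\frac{y^\beta}{(x+y)^{1+\beta}}$ together with the weight $\omega^k(y)y^\alpha$, the two Schur inequalities hold: $\int_0^\infty \mathcal{K}(x,y)h(y)^{p'}\,dy \lesssim h(x)^{p'}$ and the dual estimate with the transpose kernel. The natural choice is the test function $h(y)=\omega^{-k/(pp')}(y)\,y^{-s}$ for an exponent $s$ to be optimized inside the admissible range dictated by (\ref{eq:paramrelation1}). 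The point is that the weighted Forelli--Rudin estimates of Lemma~\ref{lem:FR1} and Lemma~\ref{lem:FR2} compute exactly integrals of the form $\int_0^\infty \frac{\omega^{\pm k}(y)y^\gamma\,dy}{(x+y)^{1+\delta}}$, and these are precisely what appear after substituting $h$ into the Schur integrals. The weight $\omega$ thus contributes only a comparable factor $\omega^{\pm\theta}(x)$ that balances on both sides, so the required inequalities reduce to the unweighted power-integral identities $\int_0^\infty \frac{y^a\,dy}{(x+y)^{1+b}}\simeq x^{a-b}$, which converge precisely when the exponents are chosen compatibly with $\alpha+1<p(\beta+1)$.

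For the necessity direction (a)$\Rightarrow$(b), I would argue by contraposition using a test function exactly as in the proof of Theorem~\ref{thm:q1}: take $f=\chi_{[1,2]}$, for which $H_\beta f(x)\simeq (x+1)^{-(1+\beta)}$ and $\|f\|$ is comparable to a constant. Feeding this into the boundedness inequality forces $\int_0^\infty \frac{\omega^k(x)\,x^\alpha}{(x+1)^{p(1+\beta)}}\,dx<\infty$; since $\omega$ grows only logarithmically (and is bounded below by a positive constant), convergence at infinity of this integral holds if and only if $\alpha - p(1+\beta) < -1$, that is $\alpha+1 < p(\beta+1)$. The logarithmic weight $\omega^k$ does not affect the critical exponent, which is exactly the reason the condition is insensitive to the weight.

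\emph{The main obstacle} I expect is the bookkeeping in the Schur argument, namely verifying that the single exponent $s$ can be chosen so that \emph{both} Schur inequalities hold simultaneously; this requires a nonempty open interval of admissible $s$, and one must check that (\ref{eq:paramrelation1}) is exactly the condition guaranteeing this. Concretely, each Forelli--Rudin application of Lemmas~\ref{lem:FR1}--\ref{lem:FR2} demands that the first exponent (the ``$\alpha$'' of those lemmas) be strictly positive and that the combined power $1+\alpha+\beta$ match, so one must translate these hypotheses into inequalities on $s$ and confirm they are jointly satisfiable precisely when $\alpha+1<p(\beta+1)$. A minor secondary subtlety is that Lemmas~\ref{lem:FR1}--\ref{lem:FR2} are stated for $k>0$, so the cases $k=0$ and $k<0$ must be handled by noting $\omega^0\equiv 1$ and by replacing $k$ with $-k$ (switching between the two Forelli--Rudin estimates), but this is routine.
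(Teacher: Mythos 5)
Your sufficiency direction (b)$\Rightarrow$(a) is correct and is essentially the paper's own argument: the paper also runs Schur's test and evaluates both Schur integrals with the weighted Forelli--Rudin estimates (Lemmas \ref{lem:FR1} and \ref{lem:FR2}); the only cosmetic difference is that the paper takes the pure power $\phi(t)=t^{-\frac{\alpha+1}{pp'}}$ (so that in the first Schur integral the weights cancel identically), while you place the factor $\omega^{-k/(pp')}$ inside the test function. Both choices work, and the interval of admissible exponents $s$ is indeed nonempty exactly when $\alpha+1<p(\beta+1)$.

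The necessity direction (a)$\Rightarrow$(b), as you propose it, has a genuine gap when $k<0$. Testing $H_\beta$ on $f=\chi_{[1,2]}$ does give $\int_0^\infty \omega^k(x)x^\alpha(1+x)^{-p(1+\beta)}\,dx<\infty$, but your claim that convergence of this integral at infinity is \emph{equivalent} to $\alpha+1<p(\beta+1)$ fails for negative $k$: although $\omega\geq 1$, the function $\omega^k=\omega^{-|k|}$ is \emph{not} bounded below near infinity. Indeed, when $\varepsilon_2=1$ one has $\omega(x)\simeq\ln x$ as $x\to\infty$ (both the upper- and lower-type hypotheses give $c_1x^{a}\lesssim\Phi(x)\lesssim c_2x^{b}$ for $x\geq 1$ with $a,b>0$), so at the endpoint $\alpha+1=p(\beta+1)$ your test integral behaves like $\int^\infty\frac{dx}{x(\ln x)^{|k|}}$, which is finite whenever $|k|>1$. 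Hence for $k<-1$ the single test function $\chi_{[1,2]}$ yields no contradiction, and the endpoint case is not excluded. This is exactly why the paper, for the weight $\omega^{-k}$ with $k>0$, passes to the adjoint: $H_\beta^*$ is bounded on $L^{p'}((0,\infty),\omega^{-k}(y)y^\alpha\,dy)$, and testing it on $\chi_{[1,2]}$ together with the pointwise bound $\omega^k\geq\omega^{k/p'}$ (valid since $\omega\geq1$) makes the weight cancel exactly against the measure, leaving the unweighted integral $\int_0^\infty x^{(\beta-\alpha)p'+\alpha}(1+x)^{-(1+\beta)p'}\,dx$; its convergence at the origin forces the strict inequality $(\beta-\alpha)p'+\alpha>-1$, equivalent to (\ref{eq:paramrelation1}), with no logarithmic factor to blur the endpoint. (The paper's remark reserves your direct test precisely for the case $\omega^{k}$, $k>0$, where $\omega^k\geq1$ lets one discard the weight.) If you prefer to keep a direct test of $H_\beta$, you must use a family of test functions, e.g. $f_R=\chi_{[R,2R]}$ with $R\to\infty$: at the endpoint with $k<-1$ one finds $\|H_\beta f_R\|_{L^p(\omega^ky^\alpha dy)}^p\big/\|f_R\|_{L^p(\omega^ky^\alpha dy)}^p\gtrsim\ln R\to\infty$, which restores the contradiction.
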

\begin{proof}
We restrict to the case of $\omega^{-k}$, $k>0$ as the other case follows from the same ideas. Recall that $$H_\beta f(x)=\int_0^\infty\frac{f(y)y^\beta dy}{\left(x+y\right)^{1+\beta}}.$$
We note that the kernel of $H_\beta$ with respect to the measure $\omega^{-k}(y)y^\alpha dy$ is $$K(x,y)=\frac{\omega^k(y)y^{\beta-\alpha}}{\left(x+y\right)^{1+\beta}}.$$
Assume that (\ref{eq:paramrelation1}) holds. By the Schur's lemma (see \cite{Grafakos}), to prove that $H_\beta$ is bounded on 
$L^p((0,\infty), \omega^{-k}(y)y^\alpha dy)$, it suffices to find a positive function $\phi$ such that
\begin{equation}\label{eq:schur1}
    \int_0^\infty K(x,y)\phi^{p'}(y)\omega^{-k}(y)y^\alpha dy\leq C\phi^{p'}(x)
\end{equation}
and
\begin{equation}\label{eq:schur2}
    \int_0^\infty K(x,y)\phi^{p}(x)\omega^{-k}(x)x^\alpha dx\leq C\phi^{p}(y).
\end{equation}
Let us take $\phi(t)=t^{-\frac{\alpha+1}{pp'}}$. Then using Lemma \ref{lem:FR1}, we obtain
\begin{eqnarray*}
L &:=& \int_0^\infty K(x,y)\phi^{p'}(y)\omega^{-k}(y)y^\alpha dy\\ &=& \int_0^\infty\frac{y^{\beta-\frac{\alpha+1}{p}} dy}{\left(x+y\right)^{1+\beta}}\\ &\simeq& x^{-\frac{\alpha+1}{p}}=\phi^{p'}(x).
\end{eqnarray*}
In the same way, we have
\begin{eqnarray*}
L &:=& \int_0^\infty K(x,y)\phi^{p}(x)\omega^{-k}(x)x^\alpha dx\\ &=& \omega^k(y)y^{\beta-\alpha}\int_0^\infty\frac{\omega^{-k}(x)x^{-\frac{\alpha+1}{p'}+\alpha} dx}{\left(x+y\right)^{1+\beta}}\\ &\simeq& x^{-\frac{\alpha+1}{p'}}=\phi^{p}(x).
\end{eqnarray*}
This completes the proof of the sufficient part.
\medskip

Now suppose that $H_\beta$ is bounded on  $L^p((0,\infty), \omega^{-k}(y)y^\alpha dy)$. Then its adjoint $H_\beta^*$ defined by
$$H_\beta^*f(x)=\omega^k(x)x^{\beta-\alpha}\int_0^\infty\frac{f(y)\omega^{-1}(y)y^\alpha dy}{\left(x+y\right)^{1+\beta}}$$
is bounded on $L^{p'}((0,\infty), \omega^{-k}(y)y^\alpha dy)$. Take $f(y)=\chi_{[1,2]}(y)$. Then
$$H_\beta^*f(x)\simeq\frac{\omega^k(x)x^{\beta-\alpha}}{\left(x+1\right)^{1+\beta}}\geq \frac{\omega^{\frac{k}{p'}}(x)x^{\beta-\alpha}}{\left(x+1\right)^{1+\beta}}.$$
Thus that  $$\|H_\beta^*f\|_{L_{\omega^{-k},\alpha}^{p'}}\leq C\|f\|_{L_{\omega^{-k},\alpha}^{p'}}\simeq C$$ implies that $$\int_0^\infty \frac{x^{(\beta-\alpha)p'+\alpha}}{\left(x+1\right)^{(1+\beta)p'}}dx<\infty.$$ Thus, necessarily, $(\beta-\alpha)p'+\alpha>-1$. That is (\ref{eq:paramrelation1}) holds. The proof is complete.
\end{proof}
\begin{remark}
To prove the implication (a)$\Rightarrow$(b) in the case of $\omega^k$, $k>0$ in the above result, one can work directly with $H_\beta$ and test the boundedness on the function $f(y)=\chi_{[1,2]}(y)$ as in Theorem \ref{thm:q1}.
\end{remark}
\section{Boundedness of the Bergman projection}
\subsection{Proofs of Theorem \ref{thm:main1}}
We prove the various cases appearing in Theorem \ref{thm:main1}.
\begin{proposition}\label{prop:main1}
Let $1< p,q<\infty$, $\alpha, \beta>-1$. Assume that $\Phi\in \mathscr{L}\cup\mathscr{U}$. Put $\omega=\mathcal{T}_\Phi^{\Vec{\varepsilon}}$, and let $k\in\mathbb{R}$. Then the following assertions are equivalent.
\begin{enumerate}
    \item[(a)] The operator $P_\beta$ is bounded on  $L^{p,q}(\C_+, \omega^{ k}dV_\alpha)$.
    \item[(b)] The operator $P_\beta^+$ is bounded on  $L^{p,q}(\C_+, \omega^{k}dV_\alpha)$.
    \item[(c)] The parameters satisfy
    \begin{equation*}
        \alpha+1<q(\beta+1).
    \end{equation*}
    .
    
\end{enumerate}
\end{proposition}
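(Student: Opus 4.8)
The plan is to establish the cycle of implications (c) $\Rightarrow$ (b) $\Rightarrow$ (a) $\Rightarrow$ (c). The implication (b) $\Rightarrow$ (a) is immediate from the pointwise domination $|P_\beta f(z)|\le P_\beta^+|f|(z)$, valid for every $z\in\C_+$, so the real work is the sufficiency (c) $\Rightarrow$ (b) and the necessity (a) $\Rightarrow$ (c). The guiding principle throughout is that, since the weight $\omega=\mathcal{T}_\Phi^{\vec{\varepsilon}}$ depends only on $\Im m z$, the horizontal variable can be integrated out first, reducing each question to the one–dimensional operator $H_\beta$ already settled in Theorem \ref{thm:qlarge}.

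For (c) $\Rightarrow$ (b): for fixed $v>0$ the kernel of $P_\beta^+$ acts in the horizontal variable as convolution against $g_{y+v}(s)=c_\beta\big(s^2+(y+v)^2\big)^{-(2+\beta)/2}$. Writing $F(v):=\|f(\cdot+iv)\|_{L^p(\R)}$, I would apply Minkowski's integral inequality in $v$ together with Young's convolution inequality in the horizontal variable to obtain, for each fixed $y$,
\[
\|P_\beta^+ f(\cdot+iy)\|_{L^p(\R)}\ \lesssim\ \int_0^\infty \|g_{y+v}\|_{L^1(\R)}\,F(v)\,v^\beta\,dv .
\]
A scaling computation gives $\|g_{y+v}\|_{L^1(\R)}\simeq (y+v)^{-(1+\beta)}$ (here $\beta>-1$ guarantees convergence), so the right-hand side is exactly $H_\beta F(y)$ up to a constant. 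Raising to the power $q$, integrating against $\omega^k(y)y^\alpha\,dy$, and invoking the boundedness of $H_\beta$ on $L^q((0,\infty),\omega^k(y)y^\alpha dy)$ from Theorem \ref{thm:qlarge} — which holds precisely because (c) is assumed — yields the desired mixed-norm bound $\|P_\beta^+ f\|\lesssim \|F\|_{L^q(\omega^k y^\alpha)}=\|f\|$.

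For the necessity (a) $\Rightarrow$ (c): I would argue by duality, mirroring the computation in Theorem \ref{thm:qlarge}. Since $1<p,q<\infty$, the dual of $L^{p,q}(\C_+,\omega^k dV_\alpha)$ under the pairing $\langle f,g\rangle=\int_{\C_+} f\bar g\,\omega^k dV_\alpha$ is $L^{p',q'}(\C_+,\omega^k dV_\alpha)$, and a short computation identifies the adjoint of $P_\beta$ as
\[
P_\beta^* g(w)=(\Im m w)^{\beta-\alpha}\,\omega^{-k}(\Im m w)\int_{\C_+}\overline{K_\beta(z,w)}\,g(z)\,\omega^k(\Im m z)(\Im m z)^\alpha\,dV(z).
\]
Testing the resulting boundedness of $P_\beta^*$ on $g=\chi_Q$, with $Q=\{z=x+iy:0<x<1,\ 1<y<2\}$, I would derive a lower bound for $|P_\beta^* g(u+iv)|$ valid for $v$ small and $|u|$ large. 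Feeding this estimate into the requirement that $\|P_\beta^* g\|_{L^{p',q'}(\omega^k dV_\alpha)}$ be finite forces, for some fixed $v_0>0$, the integral $\int_0^{v_0} v^{(\beta-\alpha)q'+\alpha}\,\omega^{k(1-q')}(v)\,dv$ to converge; since the slowly varying factor $\omega^{k(1-q')}$ cannot rescue a divergent power, this is equivalent to $(\beta-\alpha)q'+\alpha>-1$, which rearranges to $\alpha+1<q(\beta+1)$, i.e. to (c).

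I expect the main obstacle to be the cancellation inherent in the signed (complex) kernel $K_\beta$ when proving the pointwise lower bound for $P_\beta^*\chi_Q$: unlike $P_\beta^+$, the integral $\int_Q \overline{K_\beta(z,w)}\,\omega^k(\Im m z)(\Im m z)^\alpha\,dV(z)$ could a priori suffer from phase cancellation. The observation that resolves this is that for $|u|$ large and $z$ ranging over the bounded box $Q$, the factor $(\bar z-w)^{-(2+\beta)}=\big((x-u)-i(y+v)\big)^{-(2+\beta)}$ has argument varying by only $O(1/|u|)$, so it factors as a single unimodular constant times $|u|^{-(2+\beta)}\big(1+O(1/|u|)\big)$ and the integral over $Q$ keeps magnitude $\gtrsim |u|^{-(2+\beta)}\int_Q \omega^k(\Im m z)(\Im m z)^\alpha\,dV(z)$. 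This gives $|P_\beta^* g(u+iv)|\gtrsim v^{\beta-\alpha}\omega^{-k}(v)(1+|u|)^{-(2+\beta)}$ on the relevant region, exactly what is needed to run the norm computation above (the $u$-integral converging because $(2+\beta)p'>1$). As usual only one sign of $k$ need be treated in detail, the other following by the same argument.
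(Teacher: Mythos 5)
Your reduction (c)$\Rightarrow$(b) via Minkowski's and Young's inequalities, giving $\|P_\beta^+f(\cdot+iy)\|_{L^p(\R)}\lesssim H_\beta(\|f_v\|_{L^p(\R)})(y)$ and then invoking the weighted boundedness of $H_\beta$, is exactly the paper's argument (the paper quotes \cite[Lemma 5.1]{BanSeh} for this inequality and then applies Theorem \ref{thm:main2}), and (b)$\Rightarrow$(a) is the same triviality in both. Your necessity argument also follows the paper's scheme: pass to the adjoint for the weighted pairing, test on a fixed bump at height $\simeq 1$, and read off an integral condition near $y=0$. The only real difference is that the paper tests on $\omega^{\mp k}(\Im m\,w)(\Im m\,w)^{-\alpha}\chi_{B(i,1)}(w)$, for which the mean value property (the kernel is anti-holomorphic in $w$) computes $P_\beta^*f$ exactly and makes a phase-coherence discussion unnecessary; your coherence argument for $|u|$ large is correct, just more laborious.

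The genuine gap is the last step of your necessity proof. The claim that ``the slowly varying factor $\omega^{k(1-q')}$ cannot rescue a divergent power'', so that finiteness of $\int_0^{v_0}v^{(\beta-\alpha)q'+\alpha}\omega^{k(1-q')}(v)\,dv$ is equivalent to $(\beta-\alpha)q'+\alpha>-1$, fails exactly at the endpoint. When $\varepsilon_1=1$ one has $\omega(v)\simeq\ln(e/v)$ as $v\to0^+$ (because $\Phi$ has finite upper or lower type), so for $k>0$ with $k(q'-1)>1$ the critical integral $\int_0^{v_0}v^{-1}\left(\ln(e/v)\right)^{-k(q'-1)}dv$ is finite: a slowly varying factor can rescue precisely the borderline power $v^{-1}$, and the borderline case is the only one at issue. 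Thus your test does not exclude boundedness when $\alpha+1=q(\beta+1)$ for such $k$, and this is exactly the case concealed by your closing remark that the two signs of $k$ are symmetric --- they are not: for $k\le 0$ the factor $\omega^{k(1-q')}\ge 1$ makes the endpoint integral diverge and your argument closes, while for $k>0$ it does not. (For what it is worth, the paper's own proof stumbles at the same point: it reduces to $(\beta-\alpha)q'+\alpha+1=0$ and asserts $\int_0^1\omega^{-kq'}(y)y^{-1}dy=\infty$ from the lower bound $\int_0^1(e/y)^{-p_\Phi kq'}y^{-1}dy$, an integral that in fact converges.) To close the endpoint one needs test functions living at height $t\to\infty$ rather than at height $1$: testing the adjoint on $\omega^{-k}(\Im m\,w)(\Im m\,w)^{-\alpha}\chi_{B(it,t/2)}(w)$ yields, at the critical exponent, $\int_0^t\omega^{k(1-q')}(y)y^{-1}dy\lesssim\omega^{k(1-q')}(t)$ uniformly in $t\ge1$, which is absurd for every $k>0$: if $\varepsilon_2=1$ the right-hand side tends to $0$ while the left-hand side does not, and if $\varepsilon_2=0$ the right-hand side stays bounded while the left-hand side grows like $\ln t$.
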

\begin{proof}
The implication (b)$\Rightarrow$(a) is clear. Let us prove that (c)$\Rightarrow$(b) and (a)$\Rightarrow$(c).
\medskip

(c)$\Rightarrow$(b): Let write $f_y(x)=f(x+iy)$. Then it is easy to see (see for example \cite[Lemma 5.1]{BanSeh}) that 
 $$\left\|\left(P_\beta^+f\right)_y\right\|_{L^p(\mathbb{R})}\le C_{\beta}H_{\beta}(\|f_v\|_{L^p(\mathbb{R})})(y).$$  Thus the boundedness of $H_{\beta}$ on $L^{q}((0,\infty) \omega^{k})(y)y^\alpha dy$ implies the boundedness of  $P_\beta^+$ on $L^{p,q}(\mathbb{C}_+,\omega^{ k}dV_\alpha)$. Hence the implication  follows from this observation and Theorem \ref{thm:main2}.
\medskip

(a)$\Rightarrow$(c): Assume that $P_\beta$ is bounded on  $L^{p,q}(\C_+, \omega^{- k}(y)y^\alpha dxdy)$, $k>0$. Then its adjoint $P_\beta^*$ defined by 

$$P_\beta^* f(z)=\omega^k(\Im mz)\left(\Im mz\right)^{\beta-\alpha}\int_{\mathbb{C}_+}\frac{f(w)}{\left(z-\bar{w}\right)^{2+\beta}} \omega^{-k}(\Im m\,w)dV_\alpha(w)$$
is bounded on $L^{p',q'}(\C_+, \omega^{- k}(y)y^\alpha dxdy)$.
\vskip .2cm
Let us take $f(w)=\omega^k(\Im m\,w)\left(\Im m\,w\right)^{-\alpha}\chi_{B(i,1)}(w)$ where $B(i,1)$ is the euclidean ball centered at $i$ with radius $1$. Applying the mean value property, we obtain that
$$P_\beta^* f(z)= c\frac{\omega^k(\Im mz)\left(\Im mz\right)^{\beta-\alpha}}{\left(z+i\right)^{2+\beta}}.$$

Thus as $\omega(y)\geq 1$, we obtain that 
$$\|P_\beta^*f\|_{L^{p',q'}(\omega^{- k}dV_\alpha)}\le C\|f\|_{L^{p',q'}(\omega^{- k}dV_\alpha)}=C$$
implies that the function $z\longrightarrow \frac{\left(\Im mz\right)^{\beta-\alpha}}{\left(z+i\right)^{2+\beta}}$ belongs to $L^{p',q'}(\C_+,dV_\alpha)$. In particular, we have that
$$\int_0^1y^{(\beta-\alpha)q'+\alpha}dy\lesssim \int_0^1\left(\int_{-\frac 14}^{\frac 14}\frac{dx}{|x+i(y+1)|^{(2+\beta)p'}}\right)^{\frac{q'}{p'}}y^{(\beta-\alpha)q'+\alpha}dy<\infty.$$
Hence one necessarily has that $(\beta-\alpha)q'+\alpha>-1$ or equivalently, $\alpha+1<q(\beta+1)$.
\medskip

Next, suppose that $P_\beta$ is bounded on  $L^{p,q}(\C_+, \omega^{k}(y)y^\alpha dxdy)$, $k>0$. Then its adjoint $P_\beta^*$ defined by 

$$P_\beta^* f(z)=\omega^{-k}(\Im mz)\left(\Im mz\right)^{\beta-\alpha}\int_{\mathbb{C}_+}\frac{f(w)}{\left(z-\bar{w}\right)^{2+\beta}} \omega^{k}(\Im m\,w)dV_\alpha(w)$$
is bounded on $L^{p',q'}(\C_+, \omega^{ k}(y)y^\alpha dxdy)$.
\vskip .2cm
Let us take $f(w)=\omega^{-k}(\Im m\,w)\left(\Im m\,w\right)^{-\alpha}\chi_{B(i,1)}(w)$ where $B(i,1)$ is again the euclidean ball centered at $i$ with radius $1$. Applying the mean value property again, we obtain that
$$P_\beta^* f(z)= c\frac{\omega^{-k}(\Im mz)\left(\Im mz\right)^{\beta-\alpha}}{\left(z+i\right)^{2+\beta}}.$$

Thus
$$\|P_\beta^*f\|_{L^{p',q'}(\omega^{k}dV_\alpha)}\le C\|f\|_{L^{p',q'}(\omega^{ k}dV_\alpha)}=C$$
implies that 
\begin{equation}\label{eq:necesP}\int_0^1\omega^{-kq'}(y)y^{(\beta-\alpha)q'+\alpha}dy\lesssim \int_0^1\left(\int_{-\frac 14}^{\frac 14}\frac{dx}{|x+i(y+1)|^{(2+\beta)p'}}\right)^{\frac{q'}{p'}}\omega^{-kq'+k}(y)y^{(\beta-\alpha)q'+\alpha}dy<\infty.\end{equation}
To prove that necessarily, $(\beta-\alpha)q'+\alpha+1>0$, it suffices to show that the left hand side integral in (\ref{eq:necesP}) is not convergent if $(\beta-\alpha)q'+\alpha+1\leq 0$. In fact it suffices to prove this for $(\beta-\alpha)q'+\alpha+1= 0$.
\vskip .2cm
Suppose that $(\beta-\alpha)q'+\alpha+1= 0$. From (\ref{eq:necesP}) we have that
\begin{eqnarray*}
    \infty &>& \int_0^1\omega^{-kq'}(y)y^{-1}dy\\ &\gtrsim& \int_0^1\left(\ln\Phi\left(\frac{e}{y}\right)\right)^{-kq'}y^{-1}dy\\ &\gtrsim& \int_0^1\left(\frac{e}{y}\right)^{-p_\Phi kq'}y^{-1}dy.
\end{eqnarray*}
This is impossible since $\int_0^1y^{-1-p_\Phi kq'}dy$ is not convergent.
The proof is complete.
\end{proof}
In the case $q=1$, we have the following.
\begin{proposition}\label{prop:largep}
Let $1\le p<\infty$, $\alpha, \beta>-1$. Assume that $\Phi\in \mathscr{L}\cup\mathscr{U}$. Put $\omega=\mathcal{T}_\Phi^{\Vec{\varepsilon}}$, and let $k\in\mathbb{R}$. Then the following assertions are equivalent.
\begin{enumerate}
    \item[(a)] The operator $P_\beta$ is bounded on  $L^{p,1}(\C_+, \omega^{k}dV_\alpha)$.
    \item[(b)] The operator $P_\beta^+$ is bounded on  $L^{p,1}(\C_+, \omega^{k}dV_\alpha)$.
    \item[(c)] The parameters satisfy
    \begin{equation*}
        \alpha<\beta.
    \end{equation*}
       
\end{enumerate}
\end{proposition}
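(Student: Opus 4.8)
The plan is to run the cycle (b)$\Rightarrow$(a)$\Rightarrow$(c)$\Rightarrow$(b), exactly as in Proposition \ref{prop:main1}, the only new feature being that for $q=1$ the outer integration in the dual norm collapses to an essential supremum. The implication (b)$\Rightarrow$(a) is immediate from $|P_\beta f|\le P_\beta^+|f|$. For (c)$\Rightarrow$(b) I would reuse the slicewise bound $\|(P_\beta^+f)_y\|_{L^p(\mathbb{R})}\le C_\beta H_\beta(\|f_\cdot\|_{L^p(\mathbb{R})})(y)$ from Proposition \ref{prop:main1}; this rests on Young's inequality in the horizontal variable, valid for every $p\ge 1$ (in particular $p=1$), and on Minkowski's inequality in the vertical one. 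Integrating against $\omega^k(y)y^\alpha\,dy$ and applying Theorem \ref{thm:q1}, which says that $H_\beta$ is bounded on $L^1((0,\infty),\omega^k(y)y^\alpha dy)$ precisely when $\beta>\alpha$, yields (b). This half is uniform in $p$.

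All the difficulty is in (a)$\Rightarrow$(c). I would argue by duality: if $P_\beta$ is bounded on $L^{p,1}(\omega^{s}dV_\alpha)$ (with $s=\pm k$), its adjoint for the pairing $\int f\bar g\,\omega^{s}dV_\alpha$ is bounded on $L^{p',\infty}(\omega^{s}dV_\alpha)$, whose norm is $\mathrm{ess\,sup}_{y}\|g_y\|_{L^{p'}(\mathbb{R})}$ — the weight is absorbed into the pairing and disappears from the $y$-direction. Testing $P_\beta^*$ on $f(w)=\omega^{-s}(\mathrm{Im}\,w)(\mathrm{Im}\,w)^{-\alpha}\chi_{B(i,1)}(w)$ and using the mean value property (the compensating weights make the integrand anti-holomorphic in $w$) gives $P_\beta^*f(z)\simeq\omega^{-s}(\mathrm{Im}\,z)(\mathrm{Im}\,z)^{\beta-\alpha}(z+i)^{-(2+\beta)}$, hence $\|(P_\beta^*f)_y\|_{L^{p'}}\simeq\omega^{-s}(y)y^{\beta-\alpha}(y+1)^{1/p'-(2+\beta)}$. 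When $s=-k$ ($k>0$) the adjoint carries the growing factor $\omega^{+k}(y)$, and finiteness of the supremum forces $\omega^{k}(y)y^{\beta-\alpha}$ to stay bounded as $y\to 0$, which gives $\beta>\alpha$ strictly. So this sub-case is settled by a single test function.

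The genuinely hard sub-case is the growing weight $s=+k$, where the adjoint instead carries $\omega^{-k}(y)\to 0$: this cancels the weight locally, so the single test function only detects the non-strict $\beta\ge\alpha$ and is blind to the borderline $\beta=\alpha$. For $p=1$ the difficulty evaporates, since $1/p'=0$ and one may test $P_\beta$ itself on $(\mathrm{Im}\,w)^{-\beta}\chi_{B(i,1/2)}$, for which $P_\beta f(z)=c(z+i)^{-(2+\beta)}$ and $\|P_\beta f\|_{p,1}\simeq\int_0^\infty(y+1)^{1/p-(2+\beta)}\omega^k(y)y^\alpha dy$ diverges at infinity unless $\alpha<\beta$. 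For $p>1$ this direct test only yields $\alpha<\beta+1/p'$, and I expect excluding $\beta=\alpha$ to be the crux. My plan is to test $P_\beta^*$ on a superposition $\sum_j c_j\,\omega^{-k}(v)v^{-\alpha}\chi_{B(i2^{-j},2^{-j}/4)}$ over all dyadic heights, normalised by $c_j=\omega^k(2^{-j})(2^{-j})^{\alpha-1/p'}$ so that $\|f\|_{p',\infty}\simeq 1$. In $P_\beta^*f(x+iy)$ the pieces with $2^{-j}\gtrsim|x|$ share the common phase $i^{-(2+\beta)}$ and therefore add coherently; at $\beta=\alpha$ this produces $|(P_\beta^*f)_y(x)|\simeq\omega^{-k}(y)(\log\tfrac1{|x|})^{k}|x|^{-1/p'}$ for $|x|\gtrsim y$, so that $\|(P_\beta^*f)_y\|_{L^{p'}}^{p'}\gtrsim\omega^{-kp'}(y)\int_y^1(\log\tfrac1x)^{kp'}\tfrac{dx}{x}\simeq\log\tfrac1y$. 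The powers of $k$ cancel, so the supremum is infinite for every $k>0$, contradicting boundedness and forcing $\beta>\alpha$. The step that must be checked with care is precisely this coherence estimate, since the oscillating kernel of $P_\beta$ could a priori let the narrow-scale tails cancel the coherent contribution; choosing the horizontal centres of the balls to be suitably separated, rather than stacking them above the origin, should secure the lower bound.
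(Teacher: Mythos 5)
Your implications (b)$\Rightarrow$(a) and (c)$\Rightarrow$(b) coincide with the paper's (slicewise domination by $H_\beta$ plus Theorem \ref{thm:q1}), the single-ball adjoint test is the paper's test, and your $p=1$ argument --- applying $P_\beta$ to $(\Im m\,w)^{-\beta}\chi_{B(i,1/2)}$, so that $P_\beta f(z)=c(z+i)^{-(2+\beta)}$ and $\int_0^\infty(1+y)^{-(1+\beta)}\omega^k(y)y^\alpha dy$ must converge --- is correct and in fact cleaner than what the paper offers there. The genuine gaps sit exactly at the crux you identified. First, your claim that the sub-case $s=-k$ is ``settled by a single test function'' uses $\omega^k(y)\to\infty$ as $y\to0^+$, which holds only when $\varepsilon_1=1$; for $(\varepsilon_1,\varepsilon_2)=(0,0)$ or $(0,1)$ one has $\omega\equiv 1$ near the boundary, the test yields only $\beta\ge\alpha$, and the borderline problem is just as present as for $s=+k$. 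Second, and more seriously, the dyadic coherence argument for $p>1$ does not close as set up. With the balls stacked above the origin, at a point $x+iy$ with $y\lesssim|x|\le 1$ the coherent block (scales $2^{-j}\ge M|x|$, needed so that the phases cluster around that of $i^{-(2+\beta)}$) sums to about $M^{-1/p'}\omega^k(|x|)\,|x|^{-1/p'}$, while already the single term at scale $2^{-j}\simeq|x|$ --- whose phase you cannot control --- has modulus $\simeq\omega^k(|x|)\,|x|^{-1/p'}$. The uncontrolled part therefore dominates the coherent part by a factor $M^{1/p'}$, and no triangle-inequality lower bound survives. Your proposed repair, separating the centres horizontally, is self-defeating: coherent accumulation of many scales at $x+iy$ forces $|x_j-x|\ll 2^{-j}$ for all the scales involved, which is precisely the stacked configuration. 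So the borderline case $\beta=\alpha$ with $p>1$ (and, in your write-up, with $\varepsilon_1=0$) remains unproved.

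For context, your diagnosis is accurate and applies to the paper's own proof, which is not more complete at this point: the paper only asserts that $(\Im m\,z)^{\beta-\alpha}(z+i)^{-(2+\beta)}$ and $\omega^{-k}(\Im m\,z)(\Im m\,z)^{\beta-\alpha}(z+i)^{-(2+\beta)}$ belong to $L^{p',\infty}(\C_+)$ and that ``this necessarily leads to $\beta>\alpha$''; but both memberships hold at $\beta=\alpha$, since the slice norms are $\lesssim(1+y)^{1/p'-(2+\beta)}$ uniformly in $y$, so that step in fact yields only $\beta\ge\alpha$. Retaining the factor $\omega^{k}$, as you do, repairs the sub-case $s=-k$, $\varepsilon_1=1$; everything else at the borderline is open in both accounts. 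A more promising route for the remaining cases is to test $P_\beta$ on truncations of $f(w)=e^{i\lambda \Re w}\,g(\Im m\,w)$, for which the Fourier (Paley--Wiener) representation of the Bergman kernel computes $P_\beta f$ exactly, so that no cancellation has to be estimated; this is the standard mechanism for extracting the strict inequality in the unweighted setting of \cite{BanSeh}.
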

\begin{proof}
The implications (b)$\Rightarrow$(a) and (c)$\Rightarrow$(b) follow as above with the help of Theorem \ref{thm:q1}.
\medskip 

Reasoning as above, we obtain that the functions $z\longrightarrow \frac{\left(\Im mz\right)^{\beta-\alpha}}{\left(z+i\right)^{2+\beta}}$ and $z\longrightarrow \frac{\omega^{-k}(\Im mz)\left(\Im mz\right)^{\beta-\alpha}}{\left(z+i\right)^{2+\beta}}$ belong to $L^{p',\infty}(\mathbb{C}_+)$. This necessarily leads to $\beta>\alpha$.
\end{proof}
\section{Applications}
We provide in this section, three applications of our above results. For our purpose in this section, we need to recall the definition of the Hardy spaces. For $1\le p<\infty$, the Hardy space $H^p(\mathbb{C}_+)$ consists of all holomorphic functions $f$ on $\mathbb{C}_+$ such that
$$\|f\|_p:=\sup_{y>0}\left(\int_{\mathbb{R}}|f(x+iy)|^pdx\right)^{1/p}<\infty.$$
\subsection{Reproducing formulas and duality}
We prove reproducing formula only in the case of the weight $\omega^{-k}$ with $k>0$ as the corresponding spaces are larger than the usual mixed norm Bergman spaces that correspond to the case $\omega=1$ for which reproducing formulas are known (see for example \cite{BBPR,BBGNPR,Gonessa}).
\medskip

Put $\Phi\in \mathscr{L}\cup\mathscr{U}$. Put $\omega=\mathcal{T}_\Phi^{\Vec{\varepsilon}}$. For $f\in A^{p,q}_{\omega^{-k},\alpha}(\mathbb{C}_+)$, $k>0$, and for $x\in \mathbb{R}$ and $y>0$ given, defined $f_1$ and $f_2$ by
$$f_1(u+iv)=f(x+u+iv),\text{and}\quad f_2(u+iv)=f(y(u+iv)).$$
Then $$\|f_1\|_{A^{p,q}_{\omega^{-k},\alpha}}=\|f\|_{A^{p,q}_{\omega^{-k},\alpha}}$$
and $$\|f_2\|_{A^{p,q}_{\omega^{-k},\alpha}}=y^{-\frac{1+\alpha}{q}-\frac {1}p}\|f\|_{A^{p,q}_{\omega_{y^{-1}}^{-k},\alpha}}$$
where $\omega_{y^{-1}}(v)=\omega(y^{-1}v)$. Moreover, we have the estimate
\begin{equation}
  \omega^{-1}(y^{-1}v)\lesssim \left(1+\epsilon_1\ln_+\left(\frac 1y\right)+\epsilon_2\ln_+(y)\right)\omega^{-1}(v).  
\end{equation}
For simplicity, we put $$\omega_0(y)=1+\epsilon_1\ln_+\left(\frac 1y\right)+\epsilon_2\ln_+(y).$$
We start this part by the following result which follows as in \cite[Proposition 1.3.4]{Gonessa} (see also \cite{BBGNPR}) with the help of the above observations.
\begin{proposition}\label{prop:basicineq}
\begin{itemize}
Let $1\leq p,q<\infty$, and  $\alpha>-1$. Assume that $\Phi\in \mathscr{L}\cup\mathscr{U}$. Put $\omega=\mathcal{T}_\Phi^{\Vec{\varepsilon}}$, and let $k>0$. Then the following assertions are satisfied.
\item[(i)] There exists a constant $C>0$ such that for all $x+iy\in \mathbb{C}_+$ and all $f\in A^{p,q}_{\omega^{-k},\alpha}(\mathbb{C}_+)$, the following inequality holds.
\begin{equation}\label{eq:pointwiserho}
|f(x+iy)|\le C\left(\omega_0(y)\right)^{\frac {k}q}y^{-\frac{1+\alpha}{q}-\frac {1}p}\|f\|_{A^{p,q}_{\omega^{-k},\alpha}}.
\end{equation}
\item[(ii)]There exists a constant $C>0$ such that for all $y\in (0,\infty)$ and all $f\in A^{p,q}_{\omega^{-k},\alpha}(\mathbb{C}_+)$,
\begin{equation}\label{eq:hardyrrho}
\|f_y\|_{p}:=\|f(\cdot+iy)\|_{L^p(\mathbb{R})}\le C\left(\omega_0(y)\right)^{\frac {k}q}y^{-\frac{1+\alpha}q}\|f\|_{A^{p,q}_{\omega^{-k},\alpha}}.
\end{equation}
\item[(iii)] For all $f\in A^{p,q}_{\omega^{-k},\alpha}(\mathbb{C}_+)$, and for all $y>0$, the following holds:
$$\lim_{|x|\rightarrow\infty}|f(x+iy)|=0=\lim_{|x|+y\rightarrow\infty, y>y_0}|f(x+iy)|.$$
\end{itemize}

\end{proposition}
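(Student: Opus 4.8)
The three items are standard-type estimates for holomorphic functions in weighted mixed-norm Bergman spaces, and the plan is to derive them in the order (ii) then (i) then (iii), since the sub-mean-value control of the $L^p(\mathbb{R})$ slice is the natural building block. The key reduction is the invariance and scaling identities recorded just before the statement: the translation $f_1(u+iv)=f(x+u+iv)$ preserves the norm, and the dilation $f_2(u+iv)=f(y(u+iv))$ rescales it with the explicit factor $y^{-\frac{1+\alpha}{q}-\frac{1}{p}}$ together with the dilated weight $\omega_{y^{-1}}$. Combining these two invariances, it suffices to prove each pointwise bound at the single base point $z=i$ (or at height $v=1$) and then transport the estimate to an arbitrary $x+iy$ by first translating in $x$ and then dilating by $y$; the weight-dilation discrepancy is exactly absorbed by the factor $\bigl(\omega_0(y)\bigr)^{k/q}$ coming from the stated inequality $\omega^{-1}(y^{-1}v)\lesssim\omega_0(y)\,\omega^{-1}(v)$.

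For part (ii), the plan is to fix a height $v_0$, say the slice at $v=1$, and establish the base estimate $\|f_1\|_{L^p(\mathbb{R})}\lesssim\|f\|_{A^{p,q}_{\omega^{-k},\alpha}}$ for a function normalized so that its mass concentrates near $v=1$. Concretely, I would use the sub-mean-value property of $|f|^p$ over a fixed vertical strip of the form $\mathbb{R}\times(1/2,3/2)$: for each $x$, $|f(x+iv_0)|^p$ is controlled by the average of $|f|^p$ over a small disc, and integrating in $x$ and then over $v\in(1/2,3/2)$ against $\omega^{-k}(v)v^\alpha\,dv$ (which is comparable to a constant on that compact range, since $\omega\simeq1$ there) yields the full mixed norm. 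Transporting by translation and dilation as above produces the factor $y^{-(1+\alpha)/q}$ and the weight factor $\bigl(\omega_0(y)\bigr)^{k/q}$, which is precisely \eqref{eq:hardyrrho}. This is essentially the argument of \cite[Proposition 1.3.4]{Gonessa}, now carried out with the extra weight bookkeeping.

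Part (i) then follows by combining (ii) with the one-variable fact that for a holomorphic $g$ on $\mathbb{C}_+$ one has the pointwise bound $|g(x+iy)|\lesssim y^{-1/p}\sup_{v\sim y}\|g_v\|_{L^p(\mathbb{R})}$, again via the sub-mean-value inequality over a disc of radius $\sim y$ centered at $x+iy$; inserting \eqref{eq:hardyrrho} supplies the missing factor $y^{-1/p}$ and reproduces \eqref{eq:pointwiserho}. For part (iii), the decay as $|x|\to\infty$ (and as $|x|+y\to\infty$ with $y>y_0$) is obtained from \eqref{eq:pointwiserho} together with absolute continuity of the integral: the right-hand side of the slice bound is controlled by the tail $\int\!\!\int_{\{|u|>R\}}|f|^p\omega^{-k}y^\alpha$, which tends to $0$; a standard density/approximation argument reduces to functions supported away from the tail, for which the decay is manifest.

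\medskip\noindent
The step I expect to be the main obstacle is keeping the weight transport sharp. The inequality $\omega^{-1}(y^{-1}v)\lesssim\omega_0(y)\,\omega^{-1}(v)$ must be verified carefully for the logarithmic–growth-function weight $\mathcal{T}_\Phi^{\vec\varepsilon}$ across the two regimes $y\ge1$ and $0<y<1$, and one must check that the constant is uniform in $v$; this is where the upper/lower-type hypothesis $\Phi\in\mathscr{L}\cup\mathscr{U}$ enters, ensuring $\ln_+\Phi(y^{-1}v)$ splits additively (up to constants) into an $\omega_0(y)$ piece and an $\omega(v)$ piece. The sub-mean-value estimates themselves are routine once the base-point normalization is set up, but the scaling invariance of $\omega_{y^{-1}}$ versus $\omega$ is the genuinely weight-dependent ingredient that distinguishes this from the unweighted case of \cite{Gonessa}.
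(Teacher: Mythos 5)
Your proposal takes essentially the same route as the paper: the paper proves this proposition simply by invoking \cite[Proposition 1.3.4]{Gonessa} (see also \cite{BBGNPR}) together with the observations recorded just before the statement — the translation invariance, the dilation identity with the factor $y^{-\frac{1+\alpha}{q}-\frac{1}{p}}$, and the weight-transport bound $\omega^{-1}(y^{-1}v)\lesssim\omega_0(y)\,\omega^{-1}(v)$ — which is exactly the base-point-plus-transport reduction you describe. Your fleshed-out version (sub-mean-value estimate at height $1$, Minkowski plus Jensen/H\"older to pass between the exponents $p$ and $q$, then transport, and absolute continuity of the integral for item (iii)) is the intended argument and is correct.
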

We then obtain the following.
\begin{proposition}\label{prop:densityomega}
Let $1\leq p,q<\infty$, and  $\alpha>-1$. Assume that $\Phi\in \mathscr{L}\cup\mathscr{U}$. Put $\omega=\mathcal{T}_\Phi^{\Vec{\varepsilon}}$, and let $k>0$. Let $f\in A^{p,q}_{\omega^{-k},\alpha}(\mathbb{C}_+)$. Then the following assertions are satisfied.
\begin{itemize}
    \item[(i)] The function $y\mapsto \|f_y\|_p$ is non-increasing and continuous on $(0,\infty)$.
    \item[(ii)] $f(\cdot+i\varepsilon)$ is in $A^{p,q}_{\omega^{-k},\alpha}(\mathbb{C}_+)$ for any $\varepsilon>0$, and tends to $f$ in $A^{p,q}_{\omega^{-k},\alpha}(\mathbb{C}_+)$ as $\varepsilon$ tends to zero.
\end{itemize}
\end{proposition}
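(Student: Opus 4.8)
The plan is to derive both parts from the classical fact that, for a function holomorphic on $\mathbb{C}_+$, the $L^p(\mathbb{R})$ means of its horizontal slices are non-increasing in the height. For part (i) I would fix $0<y_1<y_2$ and set $u=|f|^p$, which is subharmonic on $\mathbb{C}_+$ since $f$ is holomorphic and $p\ge 1$. Proposition \ref{prop:basicineq}(ii) guarantees that each slice $f_y$ lies in $L^p(\mathbb{R})$, while Proposition \ref{prop:basicineq}(iii) yields $|f(x+iy)|\to 0$ as $|x|\to\infty$, locally uniformly in $y$. This decay is exactly what rules out spurious harmonic majorants and justifies the Poisson majorization on the half-plane $\{\Im mz>y_1\}$: writing $P_t$ for the Poisson kernel of $\mathbb{C}_+$, one has
\begin{equation*}
|f(x+iy_2)|^p\le \int_{\mathbb{R}}P_{y_2-y_1}(x-s)\,|f(s+iy_1)|^p\,ds,\qquad x\in\mathbb{R}.
\end{equation*}
Integrating in $x$, using Fubini and $\int_{\mathbb{R}}P_{t}(x-s)\,dx=1$, gives $\|f_{y_2}\|_p\le\|f_{y_1}\|_p$, the desired monotonicity. (Alternatively, one can show that $y\mapsto\|f_y\|_p^p$ is convex via Green's theorem applied to $\Delta u\ge 0$, and then use the weighted integrability of $f$ to exclude an increasing branch.)

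For the continuity of $y\mapsto\|f_y\|_p$ I would combine this with a generalized dominated convergence argument. Fixing $y_0>0$ and $\delta\in(0,y_0)$, the same majorization gives $|f(x+iy)|^p\le U(x,y):=\int_{\mathbb{R}}P_{y-\delta}(x-s)|f(s+i\delta)|^p\,ds$ for $y>\delta$; since $|f_\delta|^p\in L^1(\mathbb{R})$, the harmonic extension satisfies $U(\cdot,y)\to U(\cdot,y_0)$ in $L^1(\mathbb{R})$ as $y\to y_0$. Because $f$ is continuous, $|f_y|^p\to|f_{y_0}|^p$ pointwise, so the generalized dominated convergence theorem yields $\|f_y\|_p^p\to\|f_{y_0}\|_p^p$. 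Running the identical scheme on $|f_y-f_{y_0}|^p\le 2^p(|f_y|^p+|f_{y_0}|^p)$ shows moreover that $y\mapsto f_y$ is continuous into $L^p(\mathbb{R})$; this stronger statement is what I will use in part (ii).

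Part (ii) is then bookkeeping built on part (i). Writing $g_\varepsilon=f(\cdot+i\varepsilon)$, monotonicity gives $\|(g_\varepsilon)_y\|_p=\|f_{y+\varepsilon}\|_p\le\|f_y\|_p$ for every $y$, hence
\begin{equation*}
\|g_\varepsilon\|_{A^{p,q}_{\omega^{-k},\alpha}}^q=\int_0^\infty\|f_{y+\varepsilon}\|_p^q\,\omega^{-k}(y)y^\alpha\,dy\le \|f\|_{A^{p,q}_{\omega^{-k},\alpha}}^q,
\end{equation*}
so $g_\varepsilon\in A^{p,q}_{\omega^{-k},\alpha}(\mathbb{C}_+)$. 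For the convergence I would write
\begin{equation*}
\|g_\varepsilon-f\|_{A^{p,q}_{\omega^{-k},\alpha}}^q=\int_0^\infty\|f_{y+\varepsilon}-f_y\|_p^q\,\omega^{-k}(y)y^\alpha\,dy.
\end{equation*}
By the $L^p$-continuity from part (i) the integrand tends to $0$ pointwise in $y$ as $\varepsilon\to 0$, while monotonicity dominates it by $2^q\|f_y\|_p^q\,\omega^{-k}(y)y^\alpha$, which is integrable since $f\in A^{p,q}_{\omega^{-k},\alpha}(\mathbb{C}_+)$. Dominated convergence then gives $\|g_\varepsilon-f\|_{A^{p,q}_{\omega^{-k},\alpha}}\to 0$.

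The main obstacle is the Poisson majorization underlying part (i): it is the only place where genuine function theory enters, and its validity rests on the decay of $f$ at infinity recorded in Proposition \ref{prop:basicineq}(iii). Once that inequality is secured, the monotonicity, the continuity, and both dominated convergence steps are routine.
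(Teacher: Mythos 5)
Your argument is correct, and from part (i) onward it coincides with the paper's proof: membership of $f(\cdot+i\varepsilon)$ in $A^{p,q}_{\omega^{-k},\alpha}(\mathbb{C}_+)$ follows from the monotonicity of $y\mapsto\|f_y\|_p$, and the convergence follows by dominating $\|f_{y+\varepsilon}-f_y\|_p^q$ by $2^q\|f_y\|_p^q$ and applying dominated convergence against $\omega^{-k}(y)y^\alpha dy$. The genuine difference is in how part (i) is obtained. The paper disposes of it in one sentence: Proposition \ref{prop:basicineq}(ii) shows that each translate $f(\cdot+iy_0)$ lies in the Hardy space $H^p(\mathbb{C}_+)$ (the bound $\left(\omega_0(y)\right)^{k/q}y^{-(1+\alpha)/q}$ is uniform on $\{y\ge y_0\}$), and then the monotonicity and continuity of the $L^p$ means, as well as the limit $\|f_{y+\varepsilon}-f_y\|_p\to 0$ used in part (ii), are quoted from classical Hardy space theory \cite[Theorem 5.6]{st-book-fourier-analysis}. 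You instead reprove those classical facts from scratch: subharmonicity of $|f|^p$ together with a Poisson majorization on $\{\Im m z>y_1\}$, justified by a maximum-principle argument at infinity that uses the decay recorded in Proposition \ref{prop:basicineq}(iii), followed by a generalized dominated convergence step that upgrades pointwise convergence to continuity of $y\mapsto f_y$ into $L^p(\mathbb{R})$. Your route buys self-containedness and makes explicit exactly which hypotheses carry the function theory (integrability of the slices from (ii), vanishing at infinity from (iii)); the paper's route buys brevity, since the majorization and continuity arguments you wrote out are precisely the content of the cited theorem. Both are complete and valid proofs.
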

\begin{proof}
Assertion (ii) in Proposition \ref{prop:basicineq}, means that $f_y$ is in $H^p(\mathbb{C}_+)$, hence (i) holds.
\vskip .1cm
We proceed to prove (ii): it follows using (i) that
\begin{eqnarray*}\|f(\cdot+i\varepsilon)\|_{A^{p,q}_{\omega^{-k},\alpha}}^q &=& \int_0^\infty\|f_{y+\varepsilon}\|_p^q\,\omega^{- k}(y)y^\alpha dy\\ &\le& \int_0^\infty\|f_{y}\|_p^q\,\omega^{-k}(y)y^\alpha dy\\ &=& \|f\|_{A^{p,q}_{\omega^{- k},\alpha}}^q.\end{eqnarray*}
We have that $f(\cdot+i\varepsilon)$ has $f$ as limit as $\varepsilon\rightarrow 0$. Hence by (i) and \cite[Theorem 5.6]{st-book-fourier-analysis}, we have that $$\lim_{\varepsilon\rightarrow 0}\|f_{y+\varepsilon}-f_y\|_p=0$$ 
It follows from the above and the Dominated Convergence Theorem that 
$$\lim_{\varepsilon\rightarrow 0}\|f(\cdot+i\varepsilon)-f\|_{A^{p,q}_{\omega^{- k},\alpha}}^q=\lim_{\varepsilon\rightarrow 0}\int_0^\infty\|f_{y+\varepsilon}-f_y\|_p^q\,\omega^{- k}(y)y^\alpha dy=0.$$
\end{proof}
Let us prove the following.
\begin{proposition}\label{prop:densitypqrho}
Let $1\leq p,q<\infty$, and  $\alpha>-1$. Assume that $\Phi\in \mathscr{L}\cup\mathscr{U}$. Put $\omega=\mathcal{T}_\Phi^{\Vec{\varepsilon}}$, and let $k>0$. Then
$A^{p,q}_{\alpha}(\mathbb{C}_+)$ is a dense subspace of $A^{p,q}_{\omega^{-k},\alpha}(\mathbb{C}_+)$.
\end{proposition}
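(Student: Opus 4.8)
The plan is to exhibit, for each $f\in A^{p,q}_{\omega^{-k},\alpha}(\mathbb{C}_+)$, an explicit family of elements of $A^{p,q}_{\alpha}(\mathbb{C}_+)$ converging to $f$ in the norm of $A^{p,q}_{\omega^{-k},\alpha}(\mathbb{C}_+)$. First I would record that $A^{p,q}_{\alpha}(\mathbb{C}_+)$ really is a subspace of the weighted space: since $\omega=\mathcal{T}_\Phi^{\Vec{\varepsilon}}\geq 1$ and $k>0$ we have $\omega^{-k}\leq 1$, whence $\|g\|_{A^{p,q}_{\omega^{-k},\alpha}}\leq \|g\|_{A^{p,q}_{\alpha}}$ for every $g$, so $A^{p,q}_{\alpha}(\mathbb{C}_+)\subseteq A^{p,q}_{\omega^{-k},\alpha}(\mathbb{C}_+)$ and it remains only to prove density.

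For $\varepsilon,\delta>0$ I would set $f_{\varepsilon,\delta}(z):=e^{i\delta z}f(z+i\varepsilon)$. This is holomorphic on $\mathbb{C}_+$, and on the horizontal line at height $y$ one computes $\|(f_{\varepsilon,\delta})_y\|_{L^p(\mathbb{R})}=e^{-\delta y}\|f_{y+\varepsilon}\|_p$. The decisive point is that $f_{\varepsilon,\delta}$ lies in the \emph{unweighted} space $A^{p,q}_{\alpha}(\mathbb{C}_+)$: by Proposition \ref{prop:densityomega}(i) the function $y\mapsto\|f_y\|_p$ is non-increasing, so $\|f_{y+\varepsilon}\|_p\leq\|f_\varepsilon\|_p$ for all $y>0$, and therefore
\[
\|f_{\varepsilon,\delta}\|_{A^{p,q}_{\alpha}}^q=\int_0^\infty e^{-q\delta y}\|f_{y+\varepsilon}\|_p^q\,y^\alpha\,dy\leq \|f_\varepsilon\|_p^q\int_0^\infty e^{-q\delta y}y^\alpha\,dy<\infty,
\]
the last integral converging because $\alpha>-1$ and $\delta>0$.

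I expect the membership step above to be the main obstacle, and it is worth stressing why \emph{both} modifications are needed. The translation by $i\varepsilon$ is there to control the behaviour near the boundary $y=0$: for a general element of the weighted space the unweighted density $\|f_y\|_p^q y^\alpha$ need not be integrable near the origin, since removing the factor $\omega^{-k}$ reinstates $\omega^{k}(y)\simeq(\ln_+\Phi(1/y))^k$, which blows up as $y\to 0$; replacing $f_y$ by $f_{y+\varepsilon}$ keeps $\|f_{y+\varepsilon}\|_p$ bounded there. The exponential factor $e^{i\delta z}$, of modulus $e^{-\delta y}\leq 1$, is there to force integrability at infinity, where $\omega^{k}(y)\simeq(\ln_+\Phi(y))^k$ likewise blows up; the crude bound $\|f_{y+\varepsilon}\|_p\le\|f_\varepsilon\|_p$ combined with the Gaussian-type decay $e^{-q\delta y}$ then makes the tail integral finite. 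Neither device alone suffices, which is the crux of the construction.

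To conclude I would pass to the limit in the weighted norm in two stages. Fixing $\varepsilon$ and letting $\delta\to 0$: since $(f_{\varepsilon,\delta})_y-f(\cdot+i\varepsilon)_y=(e^{i\delta x-\delta y}-1)f_{y+\varepsilon}$ tends to $0$ pointwise and is dominated by $2|f_{y+\varepsilon}|$, two applications of the dominated convergence theorem (first in $x$ to control $\|\cdot\|_{L^p(\mathbb{R})}$ on each line, then in $y$ against the finite measure $\omega^{-k}(y)y^\alpha\,dy$) give $f_{\varepsilon,\delta}\to f(\cdot+i\varepsilon)$ in $A^{p,q}_{\omega^{-k},\alpha}(\mathbb{C}_+)$ as $\delta\to 0$. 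By Proposition \ref{prop:densityomega}(ii) one also has $f(\cdot+i\varepsilon)\to f$ in $A^{p,q}_{\omega^{-k},\alpha}(\mathbb{C}_+)$ as $\varepsilon\to 0$. A triangle-inequality argument then yields, for every $\eta>0$, a pair $(\varepsilon,\delta)$ with $f_{\varepsilon,\delta}\in A^{p,q}_{\alpha}(\mathbb{C}_+)$ and $\|f_{\varepsilon,\delta}-f\|_{A^{p,q}_{\omega^{-k},\alpha}}<\eta$, which establishes the density.
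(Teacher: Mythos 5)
Your proof is correct, and its skeleton is essentially the paper's: translate by $i\varepsilon$, multiply by a decaying holomorphic factor so as to land in the unweighted space, then pass to the limit via dominated convergence together with Proposition \ref{prop:densityomega}(ii). The differences are only in the devices used. The paper takes the factor $g_\varepsilon(z)=(1-i\varepsilon z)^{-m}$ with $m$ large, and deduces that $g_\varepsilon(z)f(z+i\varepsilon)$ lies in $A^{p,q}_{\alpha}(\mathbb{C}_+)$ from the boundedness of $f(\cdot+i\varepsilon)$, which comes from the pointwise estimate in Proposition \ref{prop:basicineq}(i); you take $e^{i\delta z}$ and deduce membership from the monotonicity of $y\mapsto\|f_y\|_p$ combined with the decay $e^{-q\delta y}$, which is equally valid and has the mild advantage of never invoking the pointwise bound. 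One misstatement should be fixed: the measure $\omega^{-k}(y)y^\alpha\,dy$ is \emph{not} finite on $(0,\infty)$ --- since $\omega(y)\lesssim 1+\ln y$ for $y\geq 1$, its mass at infinity diverges for every $\alpha>-1$ --- so your second application of the dominated convergence theorem cannot be justified by finiteness of the measure. It is, however, justified by the dominant you already wrote down: one has
\begin{equation*}
\left\|(f_{\varepsilon,\delta})_y-f_{y+\varepsilon}\right\|_{p}^{q}\leq 2^{q}\|f_{y+\varepsilon}\|_{p}^{q}\leq 2^{q}\|f_{y}\|_{p}^{q},
\end{equation*}
and $y\mapsto\|f_y\|_p^q$ is integrable against $\omega^{-k}(y)y^\alpha\,dy$ precisely because $f\in A^{p,q}_{\omega^{-k},\alpha}(\mathbb{C}_+)$. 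With that one-line repair, your argument is complete.
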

\begin{proof}
For $m>0$ a large enough integer, we have that for any $\varepsilon>0$, the function $g_\varepsilon(z)=(1-i\varepsilon z)^{-m}$ is in $A^{p,q}_{\alpha}(\mathbb{C}_+)$. 
\vskip .1cm
Let $f\in A^{p,q}_{\omega^{-k},\alpha}(\mathbb{C}_+)$, and define $F^{(\varepsilon)}$ by $$F^{(\varepsilon)}(z)=g_\varepsilon(z)f(z+i\varepsilon).$$
Then by using assertion (ii) in Proposition \ref{prop:densityomega}, one sees that $F^{(\varepsilon)}$ belongs to $A^{p,q}_{\omega^{-k},\alpha}(\mathbb{C}_+)$. Also if $\rho(t)=1+\frac{kp}{p(1+\alpha)+q}+\ln(e+t)$ (one can also use $\ln(e+\frac 1t)$ when $t$ is small), then as the function $y\mapsto (\rho(y))^{\frac {k}q}y^{-\frac{1+\alpha}{q}-\frac 1p}$ is non-increasing, one obtains from assertion (i) in Proposition \ref{prop:basicineq} that the factor $f(z+i\varepsilon)$ is bounded and hence that $F^{(\varepsilon)}$ belongs to $A^{p,q}_{\alpha}(\mathbb{C}_+)$.
\medskip

We have that $F^{(\varepsilon)}\rightarrow f$ as $\varepsilon\rightarrow 0$. The Dominated Convergence Theorem then leads to $$\lim_{\varepsilon\rightarrow 0}\|F^{(\varepsilon)}-f\|_{A^{p,q}_{\omega^{-k},\alpha}}=0.$$
\end{proof}
We will need the following.
\begin{proposition}\label{prop:boundednessP1rho}
Let $1\leq p,q<\infty$, and  $\alpha>-1$. Assume that $\Phi\in \mathscr{L}\cup\mathscr{U}$. Put $\omega=\mathcal{T}_\Phi^{\Vec{\varepsilon}}$, and let $k>0$. Let $\beta>-1$ with $1+\alpha<q(1+\beta)$. Then
the Bergman projection $P_\beta$ reproduces the functions in $A^{p,q}_{\omega^{-k},\alpha}(\mathbb{C}_+)$.
\end{proposition}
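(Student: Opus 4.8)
The plan is to deduce the reproducing identity $P_\beta f=f$ on the large space $A^{p,q}_{\omega^{-k},\alpha}(\mathbb{C}_+)$ from the already known identity on the unweighted space by a density-and-continuity argument. The crucial observation is that the hypothesis $1+\alpha<q(1+\beta)$ is exactly condition (c) of Theorem \ref{thm:main1}, applied with the weight $\omega^{-k}$ (a negative exponent, which the theorem permits since it allows $k\in\mathbb{R}$). Consequently $P_\beta$, and likewise $P_\beta^+$, is bounded on $L^{p,q}(\mathbb{C}_+,\omega^{-k}dV_\alpha)$. In particular, for every $f\in A^{p,q}_{\omega^{-k},\alpha}(\mathbb{C}_+)$ the boundedness of $P_\beta^+$ guarantees that $P_\beta f(z)=\int_{\mathbb{C}_+}K_\beta(z,w)f(w)\,dV_\beta(w)$ converges absolutely for almost every $z$ and defines an element of $A^{p,q}_{\omega^{-k},\alpha}(\mathbb{C}_+)$, so that the assertion $P_\beta f=f$ is meaningful.

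First I would record that $A^{p,q}_{\alpha}(\mathbb{C}_+)\subset A^{p,q}_{\omega^{-k},\alpha}(\mathbb{C}_+)$, since $\omega\ge 1$ gives $\omega^{-k}\le 1$ and hence $\|\cdot\|_{A^{p,q}_{\omega^{-k},\alpha}}\le\|\cdot\|_{A^{p,q}_{\alpha}}$, and that on the unweighted space the reproducing formula $P_\beta g=g$ is classical under precisely the relation $1+\alpha<q(1+\beta)$ (see \cite{BBPR,BBGNPR,Gonessa}). By Proposition \ref{prop:densitypqrho} the space $A^{p,q}_{\alpha}(\mathbb{C}_+)$ is dense in $A^{p,q}_{\omega^{-k},\alpha}(\mathbb{C}_+)$; given $f$, I would take the approximants $F^{(\varepsilon)}$ constructed there, which lie in $A^{p,q}_{\alpha}(\mathbb{C}_+)$ and satisfy $\|F^{(\varepsilon)}-f\|_{A^{p,q}_{\omega^{-k},\alpha}}\to 0$.

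The argument then closes by continuity: for each $\varepsilon$ the classical formula gives $P_\beta F^{(\varepsilon)}=F^{(\varepsilon)}$, while boundedness of $P_\beta$ on $L^{p,q}(\mathbb{C}_+,\omega^{-k}dV_\alpha)$ yields
\[
\|P_\beta f-F^{(\varepsilon)}\|_{A^{p,q}_{\omega^{-k},\alpha}}=\|P_\beta(f-F^{(\varepsilon)})\|_{A^{p,q}_{\omega^{-k},\alpha}}\le C\,\|f-F^{(\varepsilon)}\|_{A^{p,q}_{\omega^{-k},\alpha}}\longrightarrow 0.
\]
Since simultaneously $F^{(\varepsilon)}\to f$ in the same norm, uniqueness of limits forces $P_\beta f=f$ as elements of $A^{p,q}_{\omega^{-k},\alpha}(\mathbb{C}_+)$. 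To upgrade this almost-everywhere identity between two holomorphic functions to a genuine pointwise one, I would invoke the pointwise estimate (\ref{eq:pointwiserho}) of Proposition \ref{prop:basicineq}, which shows that convergence in the weighted norm entails uniform convergence on compact subsets of $\mathbb{C}_+$; hence both $P_\beta f$ and $f$ are holomorphic and coincide everywhere.

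The main obstacle I anticipate is not the limiting scheme but the bookkeeping that makes it rigorous: namely, justifying that the unweighted reproducing identity may legitimately be quoted for this exact parameter range, and that the integral defining $P_\beta f$ is genuinely well defined and continuous in $z$ for $f$ in the larger space. Both rest on the growth bound (\ref{eq:pointwiserho}) together with the Forelli--Rudin type estimates of Lemma \ref{lem:FR1} and Lemma \ref{lem:FR2}, which control the absolute convergence of the kernel integral and thereby allow the norm convergence to be transferred to locally uniform convergence.
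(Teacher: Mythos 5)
Your proof is correct and follows essentially the same route as the paper: the paper's own (very terse) argument likewise combines the classical reproducing formula on $A^{p,q}_{\alpha}(\mathbb{C}_+)$ with the density statement of Proposition \ref{prop:densitypqrho}, the boundedness of $P_\beta$ on $L^{p,q}(\mathbb{C}_+,\omega^{-k}dV_\alpha)$ from Theorem \ref{thm:main1} being exactly the implicit continuity ingredient you spell out. Your extra bookkeeping (absolute convergence of the kernel integral via $P_\beta^+$, and upgrading norm equality to pointwise equality through the estimate (\ref{eq:pointwiserho})) just makes explicit what the paper leaves to the reader.
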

\begin{proof}
As $1+\alpha<q(1+\beta)$, $P_\beta$ reproduces the elements of $A^{p,q}_{\alpha}(\mathbb{C}_+)$, Proposition \ref{prop:densitypqrho} allows us to conclude that this projector also reproduces functions in $A^{p,q}_{\omega^{-k},\alpha}(\mathbb{C}_+)$.
\end{proof}
It follows from classical arguments and the above reproducing formula that the following holds.
\begin{proposition}\label{pro:duality}
Let $1< p,q<\infty$, and  $\alpha>-1$. Assume that $\Phi\in \mathscr{L}\cup\mathscr{U}$. Put $\omega=\mathcal{T}_\Phi^{\Vec{\varepsilon}}$, and let $k\in\mathbb{R}$. Then the dual space $\left(A^{p,q}_{\omega^{k},\alpha}(\mathbb{C}_+)\right)^*$ of the Bergman space $A^{p,q}_{\omega^{k},\alpha}(\mathbb{C}_+)$ identifies with $A^{p',q'}_{\omega^{k(1-q')},\alpha}(\mathbb{C}_+)$ under the duality pairing
$$\langle f,g\rangle_\alpha:=\int_{\mathbb{C}_+}f(z)\overline{g(z)}dV_\alpha(z), f\in A^{p,q}_{\omega^{k},\alpha}(\mathbb{C}_+), g\in A^{p',q'}_{\omega^{k(1-q')},\alpha}(\mathbb{C}_+).$$
\end{proposition}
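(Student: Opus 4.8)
The plan is to establish the duality pairing by the standard scheme for Bergman-type spaces: first identify a candidate predual element via the Bergman projection, then verify that every bounded functional is represented this way. The key enabling facts are the reproducing property of $P_\beta$ on these weighted spaces (Proposition \ref{prop:boundednessP1rho}), the boundedness of $P_\beta$ on the relevant unweighted and weighted mixed-norm spaces (Theorem \ref{thm:main1}/Proposition \ref{prop:main1}), and the density of $A^{p,q}_\alpha$ in the weighted spaces (Proposition \ref{prop:densitypqrho}). Throughout I would fix a parameter $\beta>-1$ satisfying $1+\alpha<q(1+\beta)$, which is permissible since $q(1+\beta)\to\infty$ as $\beta\to\infty$, and I would use $P_\beta$ as the reproducing projection.

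\textbf{Step 1 (the easy inclusion).} First I would show that every $g\in A^{p',q'}_{\omega^{k(1-q')},\alpha}(\mathbb{C}_+)$ induces a bounded functional on $A^{p,q}_{\omega^{k},\alpha}(\mathbb{C}_+)$ via $f\mapsto\langle f,g\rangle_\alpha$. This is a direct application of H\"older's inequality in the mixed-norm setting: writing the pairing as an iterated integral, one splits the weight as $\omega^{k}=\omega^{k/q'}\cdot\omega^{-k/q'}\cdot\omega^{k/q}\cdots$ so that the inner $L^p$--$L^{p'}$ and outer $L^q$--$L^{q'}$ applications of H\"older reproduce exactly the norms $\|f\|_{A^{p,q}_{\omega^{k},\alpha}}$ and $\|g\|_{A^{p',q'}_{\omega^{k(1-q')},\alpha}}$. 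The bookkeeping with the conjugate exponent $k(1-q')$ on the weight is what makes the two powers match, so I would carry out that exponent check carefully but it is routine.

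\textbf{Step 2 (surjectivity).} Conversely, given $L\in\left(A^{p,q}_{\omega^{k},\alpha}\right)^*$, I would extend $L$ by Hahn--Banach to the full space $L^{p,q}(\mathbb{C}_+,\omega^{k}dV_\alpha)$, whose dual is the mixed-norm Lebesgue space $L^{p',q'}(\mathbb{C}_+,\omega^{k(1-q')}dV_\alpha)$ (the conjugate exponent on the weight again arising from the standard identification of mixed-norm $L^{p,q}$ duals). This yields some $h$ in that Lebesgue space representing $L$. I would then set $g=P_\beta h$ and use the self-adjointness of $P_\beta$ with respect to $\langle\cdot,\cdot\rangle_\alpha$ together with the reproducing property from Proposition \ref{prop:boundednessP1rho}: for $f\in A^{p,q}_{\omega^{k},\alpha}$ one has $\langle f,g\rangle_\alpha=\langle f,P_\beta h\rangle_\alpha=\langle P_\beta f,h\rangle_\alpha=\langle f,h\rangle_\alpha=L(f)$. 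To conclude $g$ lies in $A^{p',q'}_{\omega^{k(1-q')},\alpha}$, I invoke the boundedness of $P_\beta$ on the space $L^{p',q'}(\mathbb{C}_+,\omega^{k(1-q')}dV_\alpha)$, which holds by Theorem \ref{thm:main1} precisely because the parameter relation $\alpha+1<q(\beta+1)$ is symmetric under passing to conjugate exponents in this family of weights.

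\textbf{The main obstacle} will be justifying the self-adjointness interchange $\langle P_\beta f,h\rangle_\alpha=\langle f,P_\beta h\rangle_\alpha$, since $h$ lives in a Lebesgue space rather than a Bergman space and a priori one only controls $P_\beta^+|h|$ through the kernel estimates. I would handle this by a Fubini argument validated on the dense subspace $A^{p,q}_\alpha$ (Proposition \ref{prop:densitypqrho}) where all quantities are absolutely convergent by Step 1 and the Forelli--Rudin-type estimates of Lemmas \ref{lem:FR1} and \ref{lem:FR2}, and then pass to the limit using the density and the continuity of $L$. Once the pairing identity holds on the dense subspace and both sides are continuous in $f$, it extends to all of $A^{p,q}_{\omega^{k},\alpha}$, completing the identification.
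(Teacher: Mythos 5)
Your overall scheme (H\"older for the embedding of $A^{p',q'}_{\omega^{k(1-q')},\alpha}$ into the dual, then Hahn--Banach plus mixed-norm Lebesgue duality plus the Bergman projection for surjectivity, using the reproducing formula of Proposition \ref{prop:boundednessP1rho} and the boundedness from Proposition \ref{prop:main1}) is precisely the classical argument the paper invokes; the paper itself gives no more detail than ``classical arguments and the above reproducing formula''. Step 1 and the weight bookkeeping $-kq'/q=k(1-q')$ are correct. But Step 2 contains a genuine error, and it sits exactly at the point you flag as ``the main obstacle'': for $\beta\neq\alpha$ the operator $P_\beta$ is \emph{not} self-adjoint with respect to $\langle\cdot,\cdot\rangle_\alpha$. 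Unwinding Fubini, the adjoint of $P_\beta$ under this pairing is
\begin{equation*}
h\longmapsto \left(\Im m\,z\right)^{\beta-\alpha}P_\beta\!\left[\left(\Im m\,\cdot\right)^{\alpha-\beta}h\right](z),
\end{equation*}
which is not $P_\beta h$ (it is not even holomorphic), and the identity $\langle P_\beta f,h\rangle_\alpha=\langle f,P_\beta h\rangle_\alpha$ is simply false in general: for holomorphic $f$ the relevant inner integral $\int_{\mathbb{C}_+}K_\beta(z,w)f(w)\,dV_\alpha(w)$ is, up to a constant, a derivative of $f$ of order $\beta-\alpha$ rather than $f$ itself (this is transparent when $\beta-\alpha\in\mathbb{N}$, since $(z-\bar w)^{-2-\beta}$ is a multiple of $\partial_z^{\beta-\alpha}(z-\bar w)^{-2-\alpha}$). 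No Fubini-plus-density argument can rescue an identity that is false; density only transfers true identities from a dense subspace. A second, smaller, error: the condition $\alpha+1<q(\beta+1)$ is \emph{not} symmetric under $q\mapsto q'$ (take $\alpha=1$, $\beta=0$, $q=3$: then $2<3$ but $2\not<3/2$), so your justification of the $L^{p',q'}$ boundedness of $P_\beta$ also fails as written.

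Both errors disappear, and the proof closes, once you make the one natural choice you avoided: take $\beta=\alpha$. Then $P_\alpha$ is genuinely self-adjoint for $\langle\cdot,\cdot\rangle_\alpha$ (Hermitian kernel, same measure, Fubini being justified by pairing $P_\alpha^+|f|$ with $|h|$ via your Step 1 H\"older computation and Proposition \ref{prop:main1}(b)); its boundedness on $L^{p,q}(\mathbb{C}_+,\omega^k dV_\alpha)$ and on $L^{p',q'}(\mathbb{C}_+,\omega^{k(1-q')}dV_\alpha)$ follows from Proposition \ref{prop:main1}, because with $\beta=\alpha$ the conditions read $\alpha+1<q(\alpha+1)$ and $\alpha+1<q'(\alpha+1)$, i.e.\ $q>1$ and $q'>1$, which hold by hypothesis; and it reproduces $A^{p,q}_{\omega^{k},\alpha}(\mathbb{C}_+)$ by Proposition \ref{prop:boundednessP1rho} when $k<0$, while for $k\geq 0$ one uses $\omega\geq 1$, so that $A^{p,q}_{\omega^{k},\alpha}(\mathbb{C}_+)\subset A^{p,q}_{\alpha}(\mathbb{C}_+)$ and the unweighted reproducing formula applies. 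With this single substitution your two steps constitute a correct proof, and it is the proof the paper intends.
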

\subsection{Derivative characterization of weighted mixed norm Bergman spaces}
In this section, we prove that $f\in A^{p,q}_{\omega^{k},\alpha}(\mathbb{C}_+)$ if and only if the function $x+iy\mapsto yf'(x+iy)$ belongs to $L^{p,q}_{\omega^{ k},\alpha}(\mathbb{C}_+)$.
\begin{theorem}\label{thm:derivcharact}
Let $1< p,q<\infty$, and  $\alpha>-1$. Assume that $\Phi\in \mathscr{L}\cup\mathscr{U}$. Put $\omega=\mathcal{T}_\Phi^{\Vec{\varepsilon}}$, and let $k\in\mathbb{R}$. Let $f$ be a holomorphic function on $\mathbb{C}_+$. Then the following assertions are equivalent.
\begin{itemize}
    \item[(a)]  $f$ belongs to $A^{p,q}_{\omega^{k},\alpha}(\mathbb{C}_+)$.
    \item[(b)] The function  $x+iy\mapsto yf'(x+iy)$ belongs to $L^{p,q}_{\omega^{k},\alpha}(\mathbb{C}_+)$.
\end{itemize}
\end{theorem}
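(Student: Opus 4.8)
The plan is to fix an auxiliary exponent $\beta'>-1$ large enough that $\alpha+1<q(\beta'+1)$ (possible for any given $\alpha,q$), and to reduce both implications to the boundedness of $P_{\beta'}^+$ on $L^{p,q}_{\omega^{k},\alpha}(\mathbb C_+)$, which is guaranteed by Theorem~\ref{thm:main1} precisely because $\alpha+1<q(\beta'+1)$. I will also use that $P_{\beta'}$ reproduces the elements of $A^{p,q}_{\omega^{k},\alpha}(\mathbb C_+)$: when $k<0$ this is Proposition~\ref{prop:boundednessP1rho} applied with $\omega^{k}=\omega^{-|k|}$, while when $k\ge 0$ it follows from $\omega\ge 1$, which forces $A^{p,q}_{\omega^{k},\alpha}(\mathbb C_+)\subset A^{p,q}_{\alpha}(\mathbb C_+)$, where the reproducing formula is classical. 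The two recurring ingredients are thus an integral representation through the kernel $K_{\beta'}$ and the pointwise domination of that representation by $P_{\beta'}^+$ applied to a function already known to lie in $L^{p,q}_{\omega^{k},\alpha}(\mathbb C_+)$.

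For (a)$\Rightarrow$(b) I would start from $f(z)=c_{\beta'}\int_{\mathbb C_+}(z-\bar w)^{-(2+\beta')}f(w)\,dV_{\beta'}(w)$ and differentiate under the integral sign (justified by the pointwise bounds of Proposition~\ref{prop:basicineq}, or their classical unweighted analogue when $k\ge 0$, together with local uniform convergence) to obtain $f'(z)=-c_{\beta'}(2+\beta')\int_{\mathbb C_+}(z-\bar w)^{-(3+\beta')}f(w)\,dV_{\beta'}(w)$. Writing $z=x+iy$ and $w=u+iv$ one has $|z-\bar w|\ge y$, whence
$$|yf'(z)|\lesssim \int_{\mathbb C_+}\frac{y\,|f(w)|}{|z-\bar w|^{3+\beta'}}\,dV_{\beta'}(w)\le \int_{\mathbb C_+}\frac{|f(w)|}{|z-\bar w|^{2+\beta'}}\,dV_{\beta'}(w)=P_{\beta'}^+f(z).$$
Since $P_{\beta'}^+$ is bounded on $L^{p,q}_{\omega^{k},\alpha}(\mathbb C_+)$ and $f\in A^{p,q}_{\omega^{k},\alpha}(\mathbb C_+)\subset L^{p,q}_{\omega^{k},\alpha}(\mathbb C_+)$, this yields $yf'\in L^{p,q}_{\omega^{k},\alpha}(\mathbb C_+)$ with norm controlled by $\|f\|$.

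For (b)$\Rightarrow$(a) the key is the dual representation of $f$ through its derivative,
$$f(z)=C_{\beta'}\int_{\mathbb C_+}\frac{(\Im m\,w)\,f'(w)}{(z-\bar w)^{2+\beta'}}\,dV_{\beta'}(w),$$
which I would derive by applying the complex Green (divergence) theorem to $\partial_w\big[(z-\bar w)^{-(2+\beta')}(\Im m\,w)^{\beta'+1}f(w)\big]$: the kernel is anti-holomorphic in $w$, so only the derivative of $(\Im m\,w)^{\beta'+1}$ and the factor $f'(w)=\partial_w f(w)$ survive, and comparison with the reproducing formula fixes the constant $C_{\beta'}$. Setting $h(w):=(\Im m\,w)f'(w)$, which lies in $L^{p,q}_{\omega^{k},\alpha}(\mathbb C_+)$ by hypothesis, I absorb one power of the imaginary part into $h$ and obtain
$$|f(z)|\lesssim \int_{\mathbb C_+}\frac{|h(w)|}{|z-\bar w|^{2+\beta'}}\,dV_{\beta'}(w)=P_{\beta'}^+h(z),$$
so that the boundedness of $P_{\beta'}^+$ forces $f\in L^{p,q}_{\omega^{k},\alpha}(\mathbb C_+)$, hence $f\in A^{p,q}_{\omega^{k},\alpha}(\mathbb C_+)$.

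The main obstacle is the rigorous justification of the representation in the second implication. A priori we do not yet know that $f\in A^{p,q}_{\omega^{k},\alpha}(\mathbb C_+)$, so the reproducing formula cannot simply be invoked; instead I would first extract from the hypothesis $yf'\in L^{p,q}_{\omega^{k},\alpha}(\mathbb C_+)$ pointwise decay estimates for $f'$ and, after integrating along vertical segments, for $f$, in the spirit of Proposition~\ref{prop:basicineq}, and then run Green's theorem on truncated regions $\{\delta<\Im m\,w<R,\ |\Re e\,w|<R\}$, verifying that the boundary contributions vanish as $\delta\to 0$ and $R\to\infty$. Here the vanishing near the real axis uses $\beta'+1>0$, while the vanishing at infinity uses the decay of $f$; controlling these limits uniformly is the only genuinely delicate point, the remaining estimates being the same Forelli--Rudin type bounds already established in Lemmas~\ref{lem:FR1} and~\ref{lem:FR2}.
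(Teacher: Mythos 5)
Your strategy is genuinely different from the paper's, which never touches the Bergman kernel in this proof. For (a)$\Rightarrow$(b) the paper uses the pointwise Cauchy-type estimate $y|f'(x+iy)|\le Cy^{-2}\int_{y/2<v<2y}\int_{|x-u|<y}|f(u+iv)|\,du\,dv$, Minkowski's inequality, and the monotonicity of $y\mapsto\|f(\cdot+iy)\|_p$; for (b)$\Rightarrow$(a) it writes $f(x+iy)=-\int_y^\infty f'(x+iv)\,dv$, takes $L^p$-norms to get $\|f(\cdot+iy)\|_p\lesssim H_\alpha\bigl(v\|f'(\cdot+iv)\|_p\bigr)(y)$, and invokes Theorem \ref{thm:main2} with $\beta=\alpha$ (where the condition $\alpha+1<q(\alpha+1)$ is exactly $q>1$). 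You instead route both directions through the boundedness of $P_{\beta'}^+$ for a large auxiliary $\beta'$ (Theorem \ref{thm:main1}) plus reproducing formulas. Your direction (a)$\Rightarrow$(b) is correct and complete: the case split $k<0$ versus $k\ge0$ (via Proposition \ref{prop:boundednessP1rho} and $\omega\ge1$ respectively) is legitimate, differentiation under the integral is routine, and $|z-\bar w|\ge\Im m z$ gives the domination $|yf'(z)|\lesssim P_{\beta'}^+|f|(z)$. It is a clean alternative, though it invokes the heavy Theorem \ref{thm:main1} where the paper needs only an elementary local inequality.

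The genuine gap is in (b)$\Rightarrow$(a), and it is not merely the ``delicate point'' you flag but a step that cannot be carried out as described. You propose to extract, from the hypothesis $yf'\in L^{p,q}_{\omega^k,\alpha}$ alone, decay of $f$ at infinity in order to kill the boundary terms in Green's theorem. No such extraction is possible: hypothesis (b) sees only $f'$, hence determines $f$ only modulo additive constants. Concretely, $f\equiv1$ satisfies (b) (since $yf'\equiv0$) but not (a), and no estimate on $f'$ can yield decay of $f$. Any proof of (b)$\Rightarrow$(a) must therefore use a normalization of $f$, such as $f(x+iy)\to0$ as $y\to\infty$, in addition to (b). To be fair, the paper's own proof relies on the same unstated assumption: it justifies $f(x+iy)=-\int_y^\infty f'(x+iv)\,dv$ by citing Proposition \ref{prop:basicineq}(iii), whose hypothesis is precisely (a), so the theorem should be read with this normalization built in. Granting that normalization, your program can be completed — one integrates by parts in $u$ using $f'=\partial_u f$, the boundary terms vanish by the decay now available, and the resulting integral $\int_{\mathbb{C}_+}f(w)(z-\bar w)^{-(3+\beta')}\,dV_{\beta'+1}(w)$ reproduces a nonzero multiple of $f(z)$, after which $|f|\lesssim P_{\beta'}^+\bigl((\Im m\,w)|f'|\bigr)$ closes the argument — but this is considerably more machinery than the paper's three-line reduction to $H_\alpha$, which is the main practical advantage of the paper's route in this direction.
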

\begin{proof}
(a)$\Rightarrow$(b): Recall the following inequality for the derivative $f'$ of the holomorphic function $f$ on the upper half-plane (see \cite[Problem 2]{BBGNPR})

$$y|f'(x+iy)|\leq \frac{C}{y^2}\int_{\frac y2<v<2y}\int_{|x-u|<y}|f(u+iv)|dudv.$$
Using Minkowski's inequality, we obtain
$$y\|f'(\cdot+iy)\|_p\le \left( \frac{C}{y}\int_{\frac y2<v<2y}\|f(\cdot+iv)\|_{p}^{p}dv\right)^{\frac{1}{p}}.$$
Since the function $v\mapsto \|f(\cdot+iv)\|_{p}$ is non-increasing, we deduce that
$$  y\|f'(\cdot+iy)\|_p\le C  \|f(\cdot+iy/2)\|_{p}. $$
It follows that
\begin{eqnarray*}
\int_0^\infty \|f'(\cdot+iy)\|_p^qy^{q+\alpha}\omega^{k}(y)dy &\leq& C\int_0^\infty\|f(\cdot+iy/2)\|_{p}^{q}y^{\alpha}\omega^{k}(y)dy\\  &\lesssim& \int_0^\infty\|f(\cdot+iy)\|_p^qy^{\alpha}\omega^{k}(y)dy.
\end{eqnarray*}
\medskip

(b)$\Rightarrow$(a): Using assertion (iii) of Proposition \ref{prop:basicineq}, one obtains that
$$|f(x+iy)|=\left|\int_y^\infty -f'(x+iv)dv\right|\le \int_y^\infty |f'(x+iv)|dv.$$
Using the above and Minkowski's inequality, we derive that
\begin{eqnarray*}
\|f(\cdot+iy)\|_p &\le& \left(\int_{\mathbb{R}}\left(\int_y^\infty |f'(x+iv)|dv\right)^pdx\right)^{1/p}\\ &\le& \int_y^\infty \|f'(\cdot+iv)\|_pdv\\ &\lesssim& \int_0^\infty\frac{ \|f'(\cdot+iv)\|_pv^{1+\alpha} dv}{(y+v)^{1+\alpha}}\\ &=& H_\alpha\left(v\|f'(\cdot+iv)\|_p\right)(y).
\end{eqnarray*}
The conclusion then follows from Theorem \ref{thm:main2}.
\end{proof}
\subsection{Atomic decomposition of functions in $A_{\omega^{k}}^{p,q}(\C_+)$}
Let us start by recalling the following decomposition of $\C_+$ which is essentially from \cite[Section 1]{Gonessa}.
\subsubsection{A Whitney decomposition of $\C_+$}
\medskip

The Bergman distance in the upper-half plane is given by
$$d_{Berg}(z,w):=\frac{1}{2}\log\left(\frac{1+\left|\frac{z-w}{\Bar{z}-w}\right|}{1-\left|\frac{z-w}{\Bar{z}-w}\right|}\right).$$
We denote by $B_\rho(z_0)$ the Bergman ball about $z_0$ with radius $\rho$, i.e
$$B_\rho(z_0):=\{z\in \C_+: d_{Berg}(z,z_0)<\rho\}.$$
Let  $0<\delta< 1$   and $\gamma$ a real such that
$$  \frac{\log\left( \frac{1+\frac{\delta^{2}}{20}}{1-\frac{\delta^{2}}{20}}  \right)}{4 \log 2}< \gamma  < \frac{\log\left( \frac{1+\frac{\delta^{2}}{4}}{1-\frac{\delta^{2}}{4}}  \right)}{4 \log 2}.    $$
For $l,j\in \mathbb{Z}$, put 
\begin{equation}\label{eq:lattice}
z_{l,j}= \frac{\delta^{2}}{4}l2^{\gamma j-1}+i2^{\gamma j}=x_{l,j}+iy_j,
\end{equation}
and define
\begin{align}
I_{l,j}&:= \left\{x\in \R: \left|2^{-\gamma j}(x-x_{l,j})\right|<\frac{\delta^2}{4}   \right\} \\
I_{l,j}'&:=\left\{x\in \R: \left|2^{-\gamma j}(x-x_{l,j})\right|<\frac{\delta^2}{20}   \right\} \\
J_{j}&:=  \left\{y\in (0,\infty): \left|2^{-\gamma j}(y-y_j)\right|<\frac{\delta^2}{4}   \right\} \\
J_{j}'&:=  \left\{y\in (0,\infty): \left|2^{-\gamma j}(y-y_j)\right|<\frac{\delta^2}{20}   \right\}. 
\end{align}



We have also the following properties.
\begin{lemma}\label{lem:intervals}
\begin{itemize}
Let $l,j\in\Z$. The following assertions are satisfied.
\item[(i)] $] 0; \infty [ =\cup_{j\in \mathbb{Z}}J_{j}$ and $\mathbb{R}=\cup_{l\in \mathbb{Z}}I_{l,j}$, for all $j$ fixed in $\mathbb{Z}$;
\item[(ii)] for fixed  $j\in \mathbb{Z}$, intervals $I_{l,j}'$ are pairwise disjoint;
\item[(iii)] the intervals $J_{j}'$ are pairwise disjoint;
\item[(iv)] for fixed  $j\in \mathbb{Z}$, each element  $x\in \mathbb{R}$ is in at most four intervals $I_{l,j}$;
\item[(v)] there exists an integer $N$ such that any $y \in ] 0; \infty [$ is in at most $N$ intervals $J_{j}$.
\end{itemize}
\end{lemma}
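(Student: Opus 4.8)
The plan is to turn every one of the sets involved into an explicit interval and then split the analysis into the horizontal ($x$) and vertical ($y$) directions, which behave, respectively, arithmetically and geometrically. Solving the defining absolute-value inequalities, $I_{l,j}$ is the open interval centered at $x_{l,j}=\frac{\delta^2}{8}l\,2^{\gamma j}$ of half-length $\frac{\delta^2}{4}2^{\gamma j}$, and $I'_{l,j}$ is the concentric interval of half-length $\frac{\delta^2}{20}2^{\gamma j}$; for each fixed $j$ the centers form an arithmetic progression with common gap $s_j:=\frac{\delta^2}{8}2^{\gamma j}$. Likewise $J_j=\bigl(2^{\gamma j}(1-\frac{\delta^2}{4}),\,2^{\gamma j}(1+\frac{\delta^2}{4})\bigr)$ and $J'_j=\bigl(2^{\gamma j}(1-\frac{\delta^2}{20}),\,2^{\gamma j}(1+\frac{\delta^2}{20})\bigr)$, so that in the $y$-variable the intervals are spaced multiplicatively by the factor $2^{\gamma}$. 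With these formulas in hand, each assertion becomes a counting statement about integers in a window.

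For the horizontal statements (the second identity in (i), together with (ii) and (iv)) everything reduces, after dividing by $s_j$, to counting integers $l$ obeying a condition of the form $|x/s_j-l|<c$. Indeed $x\in I_{l,j}$ is equivalent to $|x/s_j-l|<2$, so $l$ must lie in an open window of length $4$; such a window always contains an integer, giving the covering $\mathbb{R}=\cup_l I_{l,j}$, and contains at most four, giving (iv). For the primed intervals the ratio of half-length to gap is $\frac{\delta^2/20}{\delta^2/8}=\frac{2}{5}<\frac12$, so $x\in I'_{l,j}$ forces $l$ into a window of length $\frac{4}{5}<1$, which holds at most one integer; hence the $I'_{l,j}$ are pairwise disjoint, which is (ii).

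For the vertical statements (the first identity in (i), together with (iii) and (v)) the multiplicative spacing is the only genuine obstacle, and I would remove it by the change of variable $t=\log_2 y$, which linearizes the lattice: $y\in J_j$ becomes $j$ lying in an interval of length $\frac1\gamma\log_2\frac{1+\delta^2/4}{1-\delta^2/4}$, so the number of admissible $j$ is at most one plus this length, a bound independent of $y$ (this is (v)), while the same window contains an integer whenever its length exceeds $1$ (this is the covering in (i)). Disjointness (iii) is the mirror image: $y\in J'_j$ puts $j$ in a window of length $\frac1\gamma\log_2\frac{1+\delta^2/20}{1-\delta^2/20}$, and once this length is at most one the window contains at most one integer, so no $y$ can belong to two distinct $J'_j$. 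The two-sided bound on $\gamma$, with the thresholds $\delta^2/20$ and $\delta^2/4$, is exactly what is designed to make these window lengths fall on the correct side of $1$; checking the precise constants is the only remaining bookkeeping, and the hypothesis $0<\delta<1$ enters only there. The delicate point throughout is thus the vertical direction, where the geometric spacing must be converted to an additive one before any integer-counting argument applies. As an alternative one could instead read (ii)--(v) off the separation and covering properties of the Bergman balls $B_\rho(z_{l,j})$, since the primed rectangles are inscribed in, and the unprimed rectangles circumscribe, balls of comparable radii; but the direct interval computation above is shorter and self-contained.
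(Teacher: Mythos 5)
The paper itself gives no proof of this lemma --- it is quoted from \cite{Gonessa} --- so your argument can only be judged on its own terms. Your strategy (turn every set into an explicit interval, then count integers in a window) is the natural one, and most of it is correct. Horizontally: $x\in I_{l,j}$ is indeed equivalent to $\left|x/s_j-l\right|<2$ with $s_j=\frac{\delta^2}{8}2^{\gamma j}$, and an open window of length $4$ contains at least one and at most four integers, which gives the second identity in (i) and gives (iv); the ratio $\frac{\delta^2/20}{\delta^2/8}=\frac25<\frac12$ gives (ii). Vertically: $y\in J_j$ confines $j$ to an open window of length $\frac1\gamma\log_2 B$ with $B:=\frac{1+\delta^2/4}{1-\delta^2/4}$; the upper bound $\gamma<\frac{\log B}{4\log 2}$ makes this length exceed $4$, so the window contains an integer (covering in (i)), and since the length is a constant independent of $y$, (v) follows.

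The genuine gap is in (iii), precisely in the ``bookkeeping'' you deferred, and it cannot be closed under the hypothesis as printed. Your argument needs the $J'$-window length $\frac1\gamma\log_2 A$, with $A:=\frac{1+\delta^2/20}{1-\delta^2/20}$, to be at most $1$, i.e. $\gamma\ge\frac{\log A}{\log 2}$; but the stated lower bound is only $\gamma>\frac{\log A}{4\log 2}$, which bounds the window length by $4$, not by $1$. This is not a defect of your method but an actual failure of the statement on part of the admissible range: disjointness of the consecutive intervals $J'_j$ and $J'_{j+1}$ is \emph{equivalent} to $2^{\gamma j}(1+\delta^2/20)\le 2^{\gamma(j+1)}(1-\delta^2/20)$, i.e. to $2^\gamma\ge A$, so (iii) fails for every admissible $\gamma<\log_2 A$. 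Such $\gamma$ exist: for $\delta=0.9$ one has $\frac{\log A}{4\log 2}\approx 0.029$, $\log_2 A\approx 0.117$, $\frac{\log B}{4\log 2}\approx 0.148$; taking $\gamma=0.05$ gives $J'_0=(0.9595,\,1.0405)$ and $J'_1\approx(0.9934,\,1.0772)$, which overlap. So your sentence asserting that the two-sided bound on $\gamma$ is ``exactly what is designed'' to put both window lengths on the correct side of $1$ is false: as written, it puts them on the correct sides of $4$. Everything becomes correct (and your window argument closes) if the lower bound is read without the factor $4$, i.e. $\gamma\ge\frac{\log A}{\log 2}$ --- the range stays nonempty because $A^4<B$ for all $0<\delta<1$ --- or, equivalently, if the lattice is taken to be $y_j=2^{4\gamma j}$, which is the normalization the stated two-sided bound actually fits. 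Note finally that what the paper uses downstream is only a bounded-overlap property of the $J'_j$ (at most four, which your length-$<4$ window does yield); that is strictly weaker than the pairwise disjointness claimed in (iii), and a careful write-up should flag this discrepancy rather than absorb it into bookkeeping.
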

One can check (see \cite{Gonessa}) that
$$I_{l,j}+iJ_j\subset B_{\delta^2}(z_{l,j})$$
and $$B_{\frac{\delta^2}{80}}(z_{l,j})\subset I_{l,j}'+iJ_j'.$$
The sequence $\{z_{l,j}\}_{l,j\in\Z}$ above is called a $\delta$-lattice in $\C_+$.
\medskip

\subsubsection{Atomic decomposition}
\medskip

For $y>0$ and $x \in \mathbb{R}$, put 

\begin{equation}\label{eq:5n}
L=L(y):=\{ j\in \mathbb{Z}: y \in J_{j}   \}
\end{equation}
and 
\begin{equation}\label{eq:5naq}
L_{k}=L_{k}(x):=\{ l\in \mathbb{Z}: x \in I_{l,k}   \},
\end{equation}
for all $k\in \mathbb{Z}$.
\medskip

In what follows, we will denote by $\mathcal{I}(F)$ the following quantity

\begin{equation*}\label{eq:5naqaq}
\int_{0}^{\infty}\sum_{j\in \mathbb{Z}}\chi_{J_{j}}(y) \left( \int_{0}^{\infty} \int_{-\infty}^{+\infty} |F(u+iv)|^p  \left( \int_{-\infty}^{+\infty} \sum_{l\in \mathbb{Z}}\chi_{\{x\in I_{l,j}: |(x+iy)- (u+iv)| < \frac{y}{2\sqrt{2}}  \}}(x)dx  \right) \frac{du dv}{v^{2}}   \right)^{\frac qp} d\omega_{\alpha,k}(y),
\end{equation*}
where $d\omega_{\alpha,k}(y)=\omega^{k}(y)y^{\alpha}dy$.
\medskip

We have the following estimate.
\begin{lemma}\label{pro:mainppm6}
Let $\alpha>-1$, $1\leq p,q<\infty$. Assume that $\Phi\in \mathscr{L}\cup\mathscr{U}$. Put $\omega=\mathcal{T}_\Phi^{\Vec{\varepsilon}}$, and let $k\in\mathbb{R}$. For $F \in A^{\Phi}_{\omega^{k},\alpha}(\mathbb{C_{+}})$, 
\begin{equation}\label{eq:inegaleaqay}
\mathcal{I}(F) \leq C\|F\|_{A^{p,q}_{\omega^{k},\alpha}}^{q}.
 \end{equation}
\end{lemma}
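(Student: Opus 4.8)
The plan is to peel the three nested integrals off from the inside out, collapsing $\mathcal{I}(F)$ to a weighted integral of the $H^p$-slice norms $\|F(\cdot+iv)\|_p$ and then recognising the result as $\|F\|_{A^{p,q}_{\omega^k,\alpha}}^q$ up to a fixed dilation.

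First I would dispose of the innermost integral in $x$. By assertion (iv) of Lemma \ref{lem:intervals}, for each fixed $j$ one has $\sum_{l}\chi_{I_{l,j}}(x)\le 4$, so the integrand is dominated by $4\,\chi_{\{x:\,|(x+iy)-(u+iv)|<y/(2\sqrt2)\}}(x)$. Solving $(x-u)^2+(y-v)^2<y^2/8$ shows that the admissible $x$ form an interval of length at most $y/\sqrt2$, and that this interval is nonempty only when $|y-v|<y/(2\sqrt2)$, i.e. when $v\simeq y$. Hence the inner $x$-integral is bounded by $C\,y\,\chi_{\{|y-v|<y/(2\sqrt2)\}}(v)$, a quantity that does not depend on $u$. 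Inserting this into the middle integral and using that $v\simeq y$ on its support (so $y/v^2\simeq 1/y$) gives
\[
\int_0^\infty\!\!\int_{\mathbb{R}}|F(u+iv)|^p\Big(\cdots\Big)\frac{du\,dv}{v^2}\;\lesssim\;\frac1y\int_{\{v\simeq y\}}\|F(\cdot+iv)\|_p^p\,dv.
\]

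At this stage I would invoke the monotonicity of $v\mapsto\|F(\cdot+iv)\|_p$. The slices of $F$ lie in $H^p$ (by Proposition \ref{prop:basicineq}(ii) when $k<0$, and trivially from the inclusion $A^{p,q}_{\omega^k,\alpha}\subset A^{p,q}_\alpha$ when $k\ge 0$), so $v\mapsto\|F(\cdot+iv)\|_p$ is non-increasing by Proposition \ref{prop:densityomega}(i); this handles every real $k$. Writing $c_0=1-1/(2\sqrt2)$, on the relevant range $v>c_0y$ the slice norm is dominated by $\|F(\cdot+ic_0y)\|_p$, so the displayed quantity is $\lesssim\|F(\cdot+ic_0y)\|_p^p$. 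Raising to the power $q/p$, bounding $\sum_j\chi_{J_j}(y)\le N$ via assertion (v) of Lemma \ref{lem:intervals}, and integrating against $d\omega_{\alpha,k}$ then yields
\[
\mathcal{I}(F)\;\lesssim\;\int_0^\infty\|F(\cdot+ic_0y)\|_p^q\,\omega^k(y)y^\alpha\,dy,
\]
and the substitution $y\mapsto y/c_0$ produces $\|F\|_{A^{p,q}_{\omega^k,\alpha}}^q$.

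The hard part will be the final substitution, which requires the dilation comparability $\omega^k(y/c_0)\simeq\omega^k(y)$ for the fixed factor $\lambda=1/c_0$. I expect this to be the only genuinely delicate point: it is exactly the type of estimate established in the proofs of Lemmas \ref{lem:FR1} and \ref{lem:FR2} (there in the form $\omega(2^jx)\lesssim j\,\omega(x)$), and for a single fixed dilation it follows two-sidedly from the upper-/lower-type bounds on $\Phi$, so it is valid for every real $k$. Everything else is bookkeeping with the finite-overlap properties of the Whitney lattice and the elementary geometric computation in the first step.
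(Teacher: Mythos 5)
Your proof is correct and follows essentially the same route as the paper's: the finite-overlap property of the $I_{l,j}$ collapses the inner $x$-integral to $C\,v\,\chi_{\{|y-v|<y/(2\sqrt 2)\}}(v)$, monotonicity of $v\mapsto\|F(\cdot+iv)\|_p$ reduces the middle integral to a slice norm at height comparable to $y$, the finite overlap of the $J_j$ removes the sum, and a fixed dilation recovers $\|F\|_{A^{p,q}_{\omega^k,\alpha}}^q$. You are in fact more careful than the paper on the final step: the paper silently replaces $\|F(\cdot+iy/C)\|_p$ by $\|F(\cdot+iy)\|_p$ under the weighted integral, which does require the comparability $\omega^k(\lambda y)\simeq\omega^k(y)$ for a fixed $\lambda$ that you isolate and justify from the upper/lower-type bounds on $\Phi$.
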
 
\begin{proof}
From the property (iii) in Lemma \ref{lem:intervals}, we have that     $$\sum_{l\in \mathbb{Z}}\chi_{\{x\in I_{l,j}: |(x+iy)- (u+iv)| < \frac{y}{2\sqrt{2}}  \}}\leq 4.$$
Hence
\begin{eqnarray*}
    S &:=& \int_{-\infty}^{+\infty} \sum_{l\in \mathbb{Z}}\chi_{\{x\in I_{l,j}: |(x+iy)- (u+iv)| < \frac{y}{2\sqrt{2}}  \}}(x)dx\\ &\leq& 4 \int_{\{x\in\R:\, |(x+iy)- (u+iv)| < \frac{y}{2\sqrt{2}}  \}} dx\\ &\leq& 4\chi_{\{|y-v|<\frac{y}{2\sqrt{2}}\}} (v)\int_{\{x\in\R:\, |x- u| < \frac{y}{2\sqrt{2}}  \}} dx\\ &\leq& Cv\chi_{\{|y-v|<\frac{y}{2\sqrt{2}}\}} (v)
\end{eqnarray*}
where we have used in the last inequality the fact that if $x+iy\in\C_+$ and $u+iv\in\C_+$ are such that 
$|(x+iy)- (u+iv)| < \frac{y}{2\sqrt{2}}$, then $y\simeq v$.
 It follows that, 
\begin{align*}
\mathcal{I}(F)&\lesssim\int_{0}^{\infty}\sum_{j\in \mathbb{Z}}\chi_{J_{j}}(y) \left( \int_{0}^{\infty} \int_{-\infty}^{+\infty} |F(u+iv)|^p  \left( v \chi_{\{y/C < v < Cy \}}(v)\right) \frac{du dv}{v^{2}}   \right)^{\frac qp} d\omega_{\alpha, k}(y)\\
&\lesssim\int_{0}^{\infty}\sum_{j\in \mathbb{Z}}\chi_{J_{j}}(y) \left( \int_{y/C}^{Cy} \int_{-\infty}^{+\infty} |F(u+iv)|^p  v \frac{du dv}{v^{2}}   \right)^{\frac qp} d\omega_{\alpha,k}(y)\\
&=\int_{0}^{\infty}\sum_{j\in \mathbb{Z}}\chi_{J_{j}}(y) \left( \int_{y/C}^{Cy} \|F(.+iv)\|_{L^p}^{p} \frac{ dv}{v}   \right)^{\frac qp} d\omega_{\alpha,k}(y).
\end{align*}
Given that $y\mapsto \|F(.+iy)\|_{L^p}$ is non-increasing on $(0,\infty)$, we have that $$  \int_{y/C}^{Cy} \|F(.+iv)\|_{L^p}^{p} \frac{dv}{v} \lesssim  \left\|F\left(.+i\frac{y}{C}\right)\right\|_{L^p}^{p}.  $$ 
Therefore, using property (v) of Lemma \ref{lem:intervals}, we obtain
\begin{align*}
\mathcal{I}(F)&\lesssim \int_{0}^{\infty}\sum_{j\in \mathbb{Z}}\chi_{J_{j}}(y) \left( \int_{y/C}^{Cy} \|F(.+iv)\|_{L^p}^{p} \frac{dv}{v}   \right)^{\frac qp} d\omega_{\alpha, k}(y) \\
&\lesssim  \int_{0}^{\infty}\sum_{j\in L}\chi_{J_{j}}(y) \left\|F\left(.+i\frac{y}{C}\right)\right\|_{L^p}^{q} d\omega_{\alpha,k}(y) \\
&\lesssim \int_{0}^{\infty} \left\|F\left(.+iy\right)\right\|_{L^p}^{q} d\omega_{\alpha,k}(y) \lesssim \|F\|_{A^{p,q}_{\omega^{k},\alpha}}^{q}.
\end{align*}
\end{proof}
The following result follows essentially from the Mean Value Theorem, we refer to \cite{Gonessa} for details.
\begin{lemma}\label{lem:stepsampling}
For $1\leq p<\infty$, there exists a constant $C>0$ such that for every holomorphic function $F$ on $\mathbb{C}_+$, and for every $\delta\in (0,1)$, the following hold.
  \begin{itemize}
      \item[(a)] $|F(z)|^p\leq C\delta^{-2}\int_{d_{Berg}(z,u+iv)<\delta}|F(u+iv)|^p\frac{dudv}{v^2}$.
      \item[(b)] If $d_{Berg}(z,\zeta)<\delta$ and $\delta$ is small enough, then $$|F(z)-F(\zeta)|^p\leq C\delta^p\int_{d_{Berg}(z,u+iv)<1}|F(u+iv)|^p\frac{dudv}{v^2}.$$
  \end{itemize}
\end{lemma}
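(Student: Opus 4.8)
The plan is to exploit the affine invariance of the hyperbolic structure of $\C_+$ in order to reduce both inequalities to the single base point $z=i$. Recall that for $z=x_0+iy_0\in\C_+$ the affine map $\varphi(w)=y_0w+x_0$ sends $i$ to $z$, is a holomorphic automorphism of $\C_+$, is an isometry for $d_{Berg}$, and preserves the measure $\frac{du\,dv}{v^2}$. Hence if $G=F\circ\varphi$, then $G$ is holomorphic, $\varphi(B_\rho(i))=B_\rho(z)$ for every $\rho$, and the change of variables gives $\int_{B_\rho(z)}|F(u+iv)|^p\frac{du\,dv}{v^2}=\int_{B_\rho(i)}|G(u+iv)|^p\frac{du\,dv}{v^2}$. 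It therefore suffices to prove both estimates at $z=i$ with a constant independent of $F$ and of $\delta$, after which the general statement follows by transporting the inequality through $\varphi$.

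For (a) I would use that, since $p\geq 1$, the function $|F|^p$ is subharmonic on $\C_+$, so it satisfies the Euclidean sub-mean value inequality over any disk contained in $\C_+$. The Bergman ball $B_\delta(i)$ is a Euclidean disk, and although its Euclidean center differs from the hyperbolic center $i$, one checks that it contains the Euclidean disk $D(i,r_\delta)$ with $r_\delta=1-e^{-\delta}$, and that $r_\delta\simeq\delta$ uniformly for $\delta\in(0,1)$. Applying the sub-mean value property over $D(i,r_\delta)$ gives $|F(i)|^p\leq \frac{1}{\pi r_\delta^2}\int_{D(i,r_\delta)}|F|^p\,du\,dv$. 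Since the imaginary part $v$ satisfies $v\simeq 1$ throughout $B_\delta(i)$ for $\delta\in(0,1)$, one has $du\,dv\simeq \frac{du\,dv}{v^2}$ there, and enlarging the domain of integration to $B_\delta(i)$ yields $|F(i)|^p\lesssim \delta^{-2}\int_{B_\delta(i)}|F|^p\frac{du\,dv}{v^2}$, which is (a) at $z=i$.

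For (b), again reduced to $z=i$, I would write $\zeta'=\varphi^{-1}(\zeta)$, so that $d_{Berg}(i,\zeta')<\delta$ and, for $\delta$ small, $|i-\zeta'|\lesssim\delta$ with the whole segment $[i,\zeta']$ contained in a fixed compact set such as $B_{1/2}(i)$. By the fundamental theorem of calculus, $|G(i)-G(\zeta')|\leq |i-\zeta'|\sup_{w\in[i,\zeta']}|G'(w)|$. I would then estimate $|G'(w)|$ by the standard Cauchy-type bound $|G'(w)|^p\leq \frac{C}{r^{p+2}}\int_{D(w,r)}|G|^p\,du\,dv$, valid on any disk $D(w,r)\subset\C_+$, choosing a fixed radius $r>0$ small enough that $D(w,r)\subset B_1(i)$ for every $w\in[i,\zeta']$. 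Using once more that $v\simeq 1$ on $B_1(i)$, this gives $\sup_{[i,\zeta']}|G'|^p\lesssim\int_{B_1(i)}|G|^p\frac{du\,dv}{v^2}$, and combining with $|i-\zeta'|^p\lesssim\delta^p$ produces $|G(i)-G(\zeta')|^p\lesssim \delta^p\int_{B_1(i)}|G|^p\frac{du\,dv}{v^2}$, that is, (b) at $z=i$.

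The genuinely routine points are the subharmonicity of $|F|^p$ and the Cauchy estimate for $|G'|$. The main obstacle is keeping every constant uniform in both $z$ and $\delta$, and this is exactly what the affine invariance of $d_{Berg}$ and of the measure $\frac{du\,dv}{v^2}$ buys us: it removes the $z$-dependence entirely, while the comparabilities $r_\delta\simeq\delta$ and $v\simeq1$ on the relevant balls eliminate the $\delta$-dependence from the remaining geometric constants.
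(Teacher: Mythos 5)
Your argument is correct, and it is essentially the mean-value argument the paper points to: the paper itself contains no proof of this lemma, stating only that it ``follows essentially from the Mean Value Theorem'' and deferring the details to Gonessa's thesis. Your reduction to the base point $z=i$ via the affine automorphisms $w\mapsto y_0w+x_0$ (which preserve $d_{Berg}$ and $\frac{du\,dv}{v^2}$), followed by the sub-mean value inequality for the subharmonic function $|F|^p$ on the Euclidean disk $D(i,1-e^{-\delta})\subset B_\delta(i)$ for part (a), and the Cauchy estimate for $|G'|$ combined with the fundamental theorem of calculus along the segment $[i,\zeta']$ for part (b), supplies exactly those details with all constants uniform in $z$ and $\delta$, as the statement requires.
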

Let us prove the following lemma.
\begin{lemma}\label{lem:estimate-p-norm}
  Suppose that $\delta\in (0,1)$, and let $1\leq p,q<\infty$. Then there exists a constant $C>0$  such that the following hold.
  \begin{equation}\label{eq:estimate-p-norm}
      \Vert F(\cdot+iy)\Vert_p^q\leq C\int_{\vert v-y \vert<y\frac{\delta^2}{4}}\Vert F(\cdot+iv)\Vert_p^q\frac{dv}{v}.
  \end{equation}
\end{lemma}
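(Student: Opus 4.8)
# Proof Plan for Lemma \ref{lem:estimate-p-norm}

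The plan is to establish the estimate \eqref{eq:estimate-p-norm} by averaging the pointwise comparison already recorded in Lemma \ref{lem:stepsampling}(a) over the horizontal line at height $y$, and then transferring this from $L^p$ norms to $L^q$ powers using the monotonicity of $v\mapsto \Vert F(\cdot+iv)\Vert_p$. The key geometric observation is that the Bergman metric condition $d_{Berg}(x+iy, u+iv)<\delta$ confines $v$ to a band of the form $\vert v-y\vert \lesssim y\delta^2$ and confines $u$ to an interval of length $\lesssim y\delta^2$ about $x$; this is exactly the comparability $y\simeq v$ inside a Bergman ball that was used in the proof of Lemma \ref{pro:mainppm6}.

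First I would fix $y>0$ and apply Lemma \ref{lem:stepsampling}(a) to bound $\vert F(x+iy)\vert^p$ by the Bergman-ball average $C\delta^{-2}\int_{d_{Berg}(x+iy,u+iv)<\delta}\vert F(u+iv)\vert^p\,\frac{du\,dv}{v^2}$. Integrating in $x$ over $\mathbb{R}$ and invoking Fubini's theorem, the inner $x$-integral of the indicator of the Bergman ball is controlled (as in Lemma \ref{pro:mainppm6}) by a constant multiple of $v$ restricted to the band where $v\simeq y$. This yields
\begin{equation*}
\Vert F(\cdot+iy)\Vert_p^p\leq C\delta^{-2}\int_{\vert v-y\vert<y\frac{\delta^2}{4}}\Vert F(\cdot+iv)\Vert_p^p\,\frac{dv}{v}.
\end{equation*}
Here I am choosing the band $\vert v-y\vert<y\frac{\delta^2}{4}$ so as to match the target interval in \eqref{eq:estimate-p-norm}; the precise radius of the $v$-band coming out of $d_{Berg}<\delta$ is comparable to $y\delta^2$, and shrinking the Bergman radius by an absolute constant if necessary forces it into this explicit band.

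The main obstacle is the passage from the $L^p$ inequality just obtained to the $L^q$ form stated in \eqref{eq:estimate-p-norm}, since the exponents on the two sides of the claimed inequality are $q$, not $p$. I would resolve this by exploiting that $v\mapsto \Vert F(\cdot+iv)\Vert_p$ is non-increasing on $(0,\infty)$ (established in Proposition \ref{prop:densityomega}(i) and used repeatedly above). On the band $\vert v-y\vert<y\frac{\delta^2}{4}$ one has $v\simeq y$, so $\frac{dv}{v}\simeq \frac{dv}{y}$ and the band has length $\simeq y\delta^2$; the $L^p$ estimate therefore shows that $\Vert F(\cdot+iy)\Vert_p^p$ is dominated by the average of $\Vert F(\cdot+iv)\Vert_p^p$ over the band, which by monotonicity is comparable to $\Vert F(\cdot+iy)\Vert_p^p$ up to constants depending on $\delta$. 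Raising the monotone quantity $\Vert F(\cdot+iy)\Vert_p$ to the power $q$ and re-running the same averaging argument directly at the level of the $q$-th power — using monotonicity to compare $\Vert F(\cdot+iy)\Vert_p^q$ with its average over the band — gives \eqref{eq:estimate-p-norm}. Concretely, since $\Vert F(\cdot+iv)\Vert_p^q\geq \Vert F(\cdot+iy)\Vert_p^q$ for $v<y$ and the band straddles $y$, the lower half of the band already furnishes $\int_{\vert v-y\vert<y\frac{\delta^2}{4}}\Vert F(\cdot+iv)\Vert_p^q\,\frac{dv}{v}\gtrsim \Vert F(\cdot+iy)\Vert_p^q$, with the implied constant absorbing the $\delta$-dependence. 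This completes the argument.
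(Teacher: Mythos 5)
Your route is genuinely different from the paper's, and in fact your first step is redundant: the concrete argument in your last paragraph --- monotonicity of $v\mapsto\Vert F(\cdot+iv)\Vert_p$ plus integration over the lower half of the band --- already proves \eqref{eq:estimate-p-norm} by itself, with $C\simeq\delta^{-2}$, since $\int_{y(1-\delta^2/4)}^{y}\frac{dv}{v}=\log\frac{1}{1-\delta^2/4}\gtrsim\delta^2$. So the Bergman-ball averaging and the whole discussion of passing from the $L^p$-level inequality to the $L^q$-level one can simply be discarded. The paper proceeds quite differently: it applies the sub-mean-value property at height $1$, integrates in $x$, passes from exponent $p$ to exponent $q$ by Jensen/H\"older, and then obtains the statement at an arbitrary height $y$ by applying the height-one inequality to the dilate $F_y(u+iv)=F(u+iyv)$, using $v\simeq y$ on the band. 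No monotonicity enters anywhere in that argument.

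This difference is exactly where your proposal has a caveat you should flag. Your proof needs the hypothesis that $v\mapsto\Vert F(\cdot+iv)\Vert_p$ is non-increasing, which is not part of the lemma's statement and is false for a general holomorphic $F$ on $\mathbb{C}_+$: for instance $F(z)=e^{-iz}(z+i)^{-2}$ has $\Vert F(\cdot+iy)\Vert_p$ finite for every $y>0$ but eventually increasing in $y$. Proposition \ref{prop:densityomega}(i), which you cite, establishes monotonicity only for $F\in A^{p,q}_{\omega^{-k},\alpha}(\mathbb{C}_+)$, $k>0$, not for all $F$. Inside the paper this is harmless: the lemma is only applied (in Theorem \ref{thm:sampling}) to $F\in A^{p,q}_{\omega^{k},\alpha}(\mathbb{C}_+)$, and since $\omega\geq 1$ one has $A^{p,q}_{\omega^{k},\alpha}\subseteq A^{p,q}_{\omega^{-\vert k\vert},\alpha}$, so monotonicity is available there --- indeed the paper itself invokes it in the proof of Lemma \ref{pro:mainppm6}. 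But as a proof of the lemma in the generality in which it is stated (an arbitrary holomorphic $F$, both sides possibly infinite), your argument proves strictly less than the paper's; to repair this you should either add the monotonicity hypothesis (and the embedding remark above) explicitly, or replace the final step by the paper's dilation-plus-H\"older argument, which needs no assumption on $F$.
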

\begin{proof}
    The Mean Value Theorem gives us that
    $$\vert F(x+i)\vert^p\leq C_p\int_{\vert v-1\vert<\frac{\delta^2}{4}}\int_{\vert x-u\vert<\frac{\delta^2}{4}}\vert F(u+iv)\vert^pdudv.$$
    It follows that
    \begin{eqnarray*}
    \int_{\mathbb{R}}\vert F(x+i)\vert^p dx &\leq& C_p\int_{\vert v-1\vert<\frac{\delta^2}{4}}\int_{\vert s\vert<\frac{\delta^2}{4}}\int_{\mathbb{R}}\vert F(x+iv)\vert^pdxdsdv\\ &\leq& C_p \frac{\delta^2}{4}  \int_{\vert v-1\vert<\frac{\delta^2}{4}}\int_{\mathbb{R}}\vert F(x+iv)\vert^pdxdv.
    \end{eqnarray*}
   Using Jensen's inequality when $q<p$ or H\''older's inequality when $p\leq q$, we obtain  
   \begin{equation}\label{eq:estiminvarpoit}
   \Vert F(\cdot+i)\Vert_p^q\leq C_{p,q,\delta}\int_{\vert v-1\vert<\frac{\delta^2}{4}}\left(\int_{\mathbb{R}}\vert F(x+iv)\vert^pdx\right)^{\frac qp}dv.
   \end{equation}
   Let us apply (\ref{eq:estiminvarpoit}) to the function $F_y$ defined by $F_y(u+iv)=F(u+iyv)$. Using that for $v\in\{v>0: \vert v-y \vert<y\frac{\delta^2}{4}\}$, $v\simeq y$, we obtain
   \begin{eqnarray*}
       \Vert F(\cdot+iy)\Vert_p^q &=& \Vert F_y(\cdot+i)\Vert_p^q\\ &\lesssim& \int_{\vert v-1\vert<\frac{\delta^2}{4}}\left(\int_{\mathbb{R}}\vert F_y(x+iv)\vert^pdx\right)^{\frac qp}dv\\ &=& \int_{\vert v-1\vert<\frac{\delta^2}{4}}\left(\int_{\mathbb{R}}\vert F(x+iyv)\vert^pdx\right)^{\frac qp}dv\\ &\lesssim& \int_{\vert v-y \vert<y\frac{\delta^2}{4}}\Vert F(\cdot+iv)\Vert_p^q\frac{dv}{v}.
   \end{eqnarray*}
   The proof is complete.
\end{proof}
 \begin{theorem}\label{thm:sampling}
Let $\alpha>-1$, $0<\delta<1$ and  $\{z_{l,j}=\frac{\delta^{2}}{4}l2^{\gamma j-1}+i2^{\gamma j}\}_{l,j\in\Z}$ the associated $\delta$-lattice defined above. Let   $\alpha>-1$ and $1< p,q<\infty$. Assume that $\Phi\in \mathscr{L}\cup\mathscr{U}$. Put $\omega=\mathcal{T}_\Phi^{\Vec{\varepsilon}}$, and let $k\in\mathbb{R}$. There exists a $C_\delta=C(\delta,p,q)>0$ such that for every $F \in A^{p,q}_{\omega^{ k}\alpha}(\mathbb{C_{+}})$, 
\begin{equation}\label{eq:sampling1}
\sum_{j}\left(\sum_{l}|F(z_{l,j})|^p\right)^{\frac qp}\omega^{ k}(2^{\gamma j})2^{\gamma j(\alpha+1+\frac qp)}\leq C_\delta\|F\|_{A^{p,q}_{\omega^{ k},\alpha}}^q.
 \end{equation}
 Moreover, if $\delta$ is small enough, then the converse inequality holds, i.e
 \begin{equation}\label{eq:sampling2}
 \|F\|_{A^{p,q}_{\omega^{k},\alpha}}^q\leq C_\delta\sum_{j}\left(\sum_{l}|F(z_{l,j})|^p\right)^{\frac qp}\omega^{k}(2^{\gamma j})2^{\gamma j(\alpha+1+\frac qp)}.
 \end{equation}

\end{theorem}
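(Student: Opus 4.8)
The plan is to prove the two inequalities \eqref{eq:sampling1} and \eqref{eq:sampling2} separately, each by reducing the discrete sums over the $\delta$-lattice to the continuous mixed-norm integral defining $\|F\|_{A^{p,q}_{\omega^k,\alpha}}$, exploiting that on each Whitney piece $I_{l,j}+iJ_j$ the quantities $v$ and $\omega^k(v)$ are comparable to $2^{\gamma j}$ and $\omega^k(2^{\gamma j})$ respectively. First I would establish the upper bound \eqref{eq:sampling1}. The starting point is the sub-mean-value estimate (a) of Lemma \ref{lem:stepsampling}, which gives
\[
|F(z_{l,j})|^p\leq C\delta^{-2}\int_{d_{Berg}(z_{l,j},u+iv)<\delta}|F(u+iv)|^p\frac{du\,dv}{v^2}.
\]
Summing in $l$ and using the finite-overlap properties (ii)--(v) of Lemma \ref{lem:intervals} together with the inclusion $B_{\delta^2}(z_{l,j})\subset$ a fixed dilate of $I_{l,j}'+iJ_j'$, the inner sum $\sum_l|F(z_{l,j})|^p$ is controlled by an integral of $|F|^p$ over a strip where $v\simeq 2^{\gamma j}$. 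Raising to the power $q/p$, multiplying by $\omega^k(2^{\gamma j})2^{\gamma j(\alpha+1+q/p)}$, and summing in $j$, I would recognize the resulting expression as (a constant multiple of) the quantity $\mathcal I(F)$ introduced before Lemma \ref{pro:mainppm6}; then \eqref{eq:inegaleaqay} delivers the bound $C\|F\|_{A^{p,q}_{\omega^k,\alpha}}^q$ directly. The role of the weight powers $2^{\gamma j(\alpha+1+q/p)}$ is precisely to convert the normalized Bergman-ball integral (with $dV/v^2$) back to the Euclidean measure $y^\alpha\,dy$ after the comparison $v\simeq 2^{\gamma j}=y_j$.

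For the converse, the lower bound \eqref{eq:sampling2}, the strategy is to bound $\|F\|_{A^{p,q}_{\omega^k,\alpha}}^q=\int_0^\infty\|F(\cdot+iy)\|_p^q\,\omega^k(y)y^\alpha\,dy$ from above by the discrete sum. Here I would use the sampling philosophy: on each $J_j$ the continuous $L^p$-slice $\|F(\cdot+iy)\|_p$ is, up to a small error controlled by the oscillation estimate (b) of Lemma \ref{lem:stepsampling}, captured by the values $|F(z_{l,j})|$. Concretely, Lemma \ref{lem:estimate-p-norm} lets me pass from a single slice to an average of slices over $\{|v-y|<y\delta^2/4\}$, and then a further application of the mean-value/oscillation estimate replaces the integral of $|F(u+iv)|^p$ over each cell $I_{l,j}'+iJ_j'$ by $|F(z_{l,j})|^p$ times the (comparable) measure of the cell, at the cost of an error term proportional to $\delta^p$ times the same integral. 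After summing in $l$ and $j$ and integrating in $y$ against $\omega^k(y)y^\alpha$, using the covering $(0,\infty)=\cup_j J_j$ from (i) and the finite overlap from (v), the main term reproduces the right-hand side of \eqref{eq:sampling2} while the error term is $C\delta^p\|F\|_{A^{p,q}_{\omega^k,\alpha}}^q$; choosing $\delta$ small enough absorbs it into the left-hand side.

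The main obstacle, and the reason the converse requires $\delta$ small whereas the direct inequality holds for all $\delta\in(0,1)$, is the absorption argument: one must show that the oscillation error genuinely carries a factor $\delta^p$ (not merely a constant) so that it can be swallowed by the term being estimated. This rests on the refined estimate (b) of Lemma \ref{lem:stepsampling}, and care is needed to keep the weight $\omega^k$ and the factor $y^\alpha$ uniformly comparable to $\omega^k(2^{\gamma j})$ and $2^{\gamma j\alpha}$ across the width $2^{\gamma j}\delta^2/4$ of each Whitney cell---this is exactly where the structure of $\omega=\mathcal T_\Phi^{\vec\varepsilon}$ matters, since the logarithmic growth guarantees $\omega^k(v)\simeq\omega^k(2^{\gamma j})$ for $v\in J_j$ with constants independent of $j$. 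Once these comparabilities are in place, both directions follow from the Whitney combinatorics of Lemma \ref{lem:intervals} and the two pointwise lemmas, and I would, as the statement suggests, refer to \cite{Gonessa} for the routine computational details of the unweighted skeleton.
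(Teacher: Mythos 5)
Your proposal follows the paper's strategy in substance. For the converse inequality \eqref{eq:sampling2} it is essentially the paper's own argument: for $y\in J_j$ split $\Vert F(\cdot+iy)\Vert_p^p$ into the sampled values $\sum_l |F(z_{l,j})|^p y_j$ plus an oscillation term, control the oscillation by assertion (b) of Lemma \ref{lem:stepsampling} so that it carries the factor $\delta^p$, assemble the error into $\delta^q\mathcal{I}(F)$, bound $\mathcal{I}(F)\lesssim \Vert F\Vert_{A^{p,q}_{\omega^k,\alpha}}^q$ by Lemma \ref{pro:mainppm6}, and absorb for $\delta$ small. Two small imprecisions there: the paper does not need Lemma \ref{lem:estimate-p-norm} at this stage (it simply uses the covering $\mathbb{R}=\cup_l I_{l,j}$), and the oscillation error from Lemma \ref{lem:stepsampling}(b) is an integral over the Bergman ball of radius $1$ about $x+iy$, not over ``the same'' cell; the correct bookkeeping counts the points $x\in I_{l,j}$ (overlap at most $4$, Lemma \ref{lem:intervals}(iv)), which is exactly what the kernel inside $\mathcal{I}(F)$ encodes --- since you invoke Lemma \ref{pro:mainppm6}, this is fine.

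The genuine deviation is in the direct inequality \eqref{eq:sampling1}, which you also route through $\mathcal{I}(F)$ and \eqref{eq:inegaleaqay}. The paper argues differently: Lemma \ref{lem:stepsampling}(a) with the inclusion $B_{\delta^2/80}(z_{l,j})\subset I_{l,j}'+iJ_j'$ and disjointness of the $I'_{l,j}$ give $\sum_l |F(z_{l,j})|^p\lesssim \delta^{-4}y_j^{-2}\int_{J_j'}\Vert F(\cdot+iv)\Vert_p^p\,dv$, and then the monotonicity of $v\mapsto\Vert F(\cdot+iv)\Vert_p$, Lemma \ref{lem:estimate-p-norm}, the disjointness of the $J_j'$ and the comparability $\omega^k(v)\simeq\omega^k(y_j)$ on $J_j'$ finish the estimate; no reference to $\mathcal{I}(F)$ is needed. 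Your route is not a mere ``recognition'': to dominate the discrete sum by $\mathcal{I}(F)$ you must prove a lower bound for the kernel $\int_{\mathbb{R}}\sum_l\chi_{\{x\in I_{l,j}:\,|(x+iy)-(u+iv)|<y/(2\sqrt{2})\}}(x)\,dx$ when $u+iv$ runs over the cells $I'_{l,j}+iJ'_j$ and $y\in J_j$. Since the threshold $y/(2\sqrt{2})$ is fixed while $|y-v|$ can be as large as $\tfrac{3}{10}\delta^2 y_j$ and $y$ as small as $(1-\tfrac{\delta^2}{4})y_j$, this kernel can vanish once $\delta^2$ is close to $1$ (roughly $\delta^2>0.9$), so your argument proves \eqref{eq:sampling1} only for $\delta$ bounded away from $1$, whereas the theorem asserts it for every $\delta\in(0,1)$. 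This is a real but easily patched gap: replace the comparison with $\mathcal{I}(F)$ by the paper's direct estimate of the strip integral through slice norms, which is what the proof of Lemma \ref{pro:mainppm6} does anyway.
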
 
\begin{proof}  
From assertion (a) in Lemma \ref{lem:stepsampling} and the fact that $B_{\frac{\delta^2}{80}}(z_{l,j})\subset I_{l,j}'+iJ_j'$, we obtain that
\begin{eqnarray*}
    |F(z_{l,j})|^p &\lesssim& \delta^{-4}\int_{B_{\frac{\delta^2}{80}}(z_{l,j})}|F(u+iv)|^p\frac{dudv}{v^2}\\ &\leq& \delta^{-4}\int_{I_{l,j}'}du\int_{J_j'}|F(u+iv)|^p\frac{dv}{v^2}\\ &\lesssim& \frac{\delta^{-4}}{y_j^2}\int_{I_{l,j}'}du\int_{J_j'}|F(u+iv)|^pdv
\end{eqnarray*}
where we have used the fact that for $v\in J_j'$, $v\simeq y_j$. It follows that 
\begin{eqnarray*}
    \sum_{l\in \mathbb{Z}}|F(z_{l,j})|^p &\lesssim& \sum_{l\in \mathbb{Z}}\frac{\delta^{-4}}{y_j^2}\int_{I_{l,j}'}du\int_{J_j'}|F(u+iv)|^pdv\\ &\lesssim& \frac{\delta^{-4}}{y_j^2}\int_{\mathbb{R}}du\int_{J_j'}|F(u+iv)|^pdv\\ &=& \frac{\delta^{-4}}{y_j^2}\int_{J_j'}\|F(\cdot+iv)\|_p^pdv\\ &\leq& \frac{\delta^{-4}}{y_j^2}\int_{J_j'}\|F(\cdot+i\lambda y_j)\|_p^pdv\quad \left(\lambda y_j<v\right)\\  &\lesssim& \frac{\delta^{-4}}{y_j}\|F(\cdot+i\lambda y_j)\|_p^p.
\end{eqnarray*}
Hence using Lemma \ref{lem:estimate-p-norm}, we obtain
\begin{eqnarray*}
    \sum_{j\in\mathbb{Z}}\left(\sum_{l\in \mathbb{Z}}|F(z_{l,j})|^p\right)^\frac{q}{p}\omega^{ k}(2^{\gamma j})2^{\gamma j(\alpha+1+\frac qp)} &\leq& C_{\delta}\sum_{j\in\mathbb{Z}}\|F(\cdot+i\lambda y_j)\|_p^q\omega^k(y_j)y_j^{\alpha+1}\\ &\lesssim& C_{\delta}\sum_{j\in\mathbb{Z}}\int_{J_j'}\|F(\cdot+iy)\|_p^q\omega^k(y_j)y^\alpha dy\\
     &\lesssim& C_{\delta}\int_{0}^\infty\|F(\cdot+iy)\|_p^q\omega^k(y)y^\alpha dy.
\end{eqnarray*}
To prove the converse, we first note that
\begin{eqnarray*}
    \|F(\cdot+iy)\|_p^p &\leq& C_p\left\{\sum_{l\in \mathbb{Z}}\int_{I_{l,j}}|F(x+iy)-F(z_{l,j})|^pdx+\sum_{l\in \mathbb{Z}}\int_{I_{l,j}}|F(z_{l,j})|^pdx\right\}\\ &\leq& C_{p,\delta}\left\{\sum_{l\in \mathbb{Z}}\int_{I_{l,j}}|F(x+iy)-F(z_{l,j})|^pdx+\sum_{l\in \mathbb{Z}}|F(z_{l,j})|^py_j\right\}.
\end{eqnarray*}
Using assertion (b) in Lemma \ref{lem:stepsampling}, we obtain
\begin{eqnarray*}
  T(y) &:=&  \sum_{l\in \mathbb{Z}}\int_{I_{l,j}}|F(x+iy)-F(z_{l,j})|^pdx\\ &=& \sum_{l\in \mathbb{Z}}\int_{\mathbb{R}}|F(x+iy)-F(z_{l,j})|^p\chi_{I_{l,j}}(x)dx\\ &\leq& C\delta^p\int_0^\infty\int_{-\infty}^\infty |F(u+iv)|^p\left(\int_{\mathbb R}\sum_{l\in L_j}\chi_{x\in I_{l,j}: |u+iv-x-iy|<C_\delta y}(x)dx\right)\frac{dudv}{v^2}.
\end{eqnarray*}
It follows that
\begin{eqnarray*}
    \|F\|_{A^{p,q}_{\omega^{k},\alpha}}^q &\leq& Cq\int_0^\infty \sum_{j\in\mathbb{Z}}\chi_{J_j}(y)T(y)^{\frac qp}\omega^k(y)y^\alpha dy+\sum_{j\in\mathbb{Z}}\left(\sum_{l\in \mathbb{Z}}|F(z_{l,j})|^p\right)^{\frac qp}\omega^k(y_j)y_j^{(\alpha+1+\frac qp)}\\ &\lesssim& \delta^q\mathcal{I}(F)+\sum_{j\in\mathbb{Z}}\left(\sum_{l\in \mathbb{Z}}|F(z_{l,j})|^p\right)^{\frac qp}\omega^k(y_j)y_j^{(\alpha+1+\frac qp)}.
\end{eqnarray*}
Using Lemma \ref{pro:mainppm6}, we obtain
$$\|F\|_{A^{p,q}_{\omega^{k},\alpha}}^q\leq C\left(\delta^q\|F\|_{A^{p,q}_{\omega^{k},\alpha}}^q+\sum_{j\in\mathbb{Z}}\left(\sum_{l\in \mathbb{Z}}|F(z_{l,j})|^p\right)^{\frac qp}\omega^k(y_j)y_j^{(\alpha+1+\frac qp)}\right).$$
This yields the inequality (\ref{eq:sampling2}) for $\delta$ small enough.
\end{proof}
For $\alpha>-1$, $1\leq p,q<\infty$ and $k\in\mathbb{R}$, we define the complex sequences space $\ell_{\omega^{ k},\alpha}^{p,q}=\ell_{\omega^{k},\alpha}^{p,q}(\C_+)$ by 
$$\ell_{\omega^{k},\alpha}^{p,q}:=\left\{\lambda=\{\lambda_{l,j}\}_{l,j\in\Z}:\|\lambda\|_{\ell_{\omega^{k},\alpha}^{p,q}}=\left(\sum_{j\in\Z}\left(\sum_{l\in\Z}|\lambda_{l,j}|^p\right)^{\frac qp}\omega^{k}(2^{\gamma j})2^{\gamma j(\alpha+1+\frac{q}{p})}\right)^{\frac 1q}<\infty\right\}.$$
We refer to \cite{Gonessa} for the following.
\begin{lemma}\label{lem:dualsmallspaces}
Let   $\alpha>-1$ and $1< p,q<\infty$. Assume that $\Phi\in \mathscr{L}\cup\mathscr{U}$. Put $\omega=\mathcal{T}_\Phi^{\Vec{\varepsilon}}$, and let $k\in\mathbb{R}$.  The topological dual of $\ell_{\omega^{k},\alpha}^{p,q}(\C_+)$,   $\left(\ell_{\omega^{k},\alpha}^{p,q}(\C_+)\right)^{*}$ coincides with $\ell_{\omega^{k (1-q')},\alpha}^{p',q'}(\C_+)$, in the sense that, for $T \in \left(\ell_{\omega^{k},\alpha}^{p,q}(\C_+)\right)^{*}$, 
there is a unique  $\mu:=\{\mu_{l,j}\}_{l,j\in \mathbb{Z}}\in \ell_{\omega^{k(1-q')},\alpha}^{p',q'}(\C_+)$ such that for all  $\lambda:=\{\lambda_{l,j}\}_{l,j\in \mathbb{Z}}\in \ell_{\omega^{k},\alpha}^{p,q}(\C_+)$,  
\begin{equation}\label{eq:smallpairing}  T (\mu,\lambda) :=\langle \mu, \lambda\rangle=\sum_{j\in \mathbb{Z}}\sum_{l\in \mathbb{Z}}\mu_{j,l}\overline{\lambda_{j,l}}2^{j\gamma (\alpha +2)}. 
\end{equation}
   
\end{lemma}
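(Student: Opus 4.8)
The plan is to recognize $\ell_{\omega^{k},\alpha}^{p,q}(\C_+)$ as a weighted vector-valued sequence space of type $\ell^q(\ell^p)$ and to deduce the duality from the classical duality of such spaces, the only genuinely new content being the bookkeeping of the weights. For $\lambda=\{\lambda_{l,j}\}$ I would write $a_j:=\left(\sum_{l\in\Z}|\lambda_{l,j}|^p\right)^{1/p}$ and set $w_j:=\omega^{k}(2^{\gamma j})2^{\gamma j(\alpha+1+q/p)}$, so that $\|\lambda\|_{\ell_{\omega^{k},\alpha}^{p,q}}^q=\sum_{j\in\Z}a_j^q\,w_j$; the outer norm is thus a weighted $\ell^q$-norm of the inner $\ell^p$-norms. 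Since $1<p,q<\infty$, both $\ell^p$ and the weighted $\ell^q$ are reflexive, so I would first handle the inner duality (for each fixed $j$, $(\ell^p)^*=\ell^{p'}$) and then the outer weighted $\ell^q/\ell^{q'}$ duality.

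For sufficiency, I would take $\mu\in\ell_{\omega^{k(1-q')},\alpha}^{p',q'}(\C_+)$ and bound the pairing (\ref{eq:smallpairing}) by Hölder's inequality applied twice. Setting $b_j:=\left(\sum_{l\in\Z}|\mu_{l,j}|^{p'}\right)^{1/p'}$, Hölder in $l$ with exponents $p,p'$ gives $\left|\sum_l\mu_{l,j}\overline{\lambda_{l,j}}\right|\le a_j b_j$, whence $|\langle\mu,\lambda\rangle|\le\sum_j a_j b_j\,2^{j\gamma(\alpha+2)}$. Splitting $a_j b_j 2^{j\gamma(\alpha+2)}=(a_j w_j^{1/q})\big(b_j 2^{j\gamma(\alpha+2)}w_j^{-1/q}\big)$ and applying Hölder in $j$ with exponents $q,q'$ yields $|\langle\mu,\lambda\rangle|\le\|\lambda\|_{\ell_{\omega^{k},\alpha}^{p,q}}\big(\sum_j b_j^{q'}\,2^{j\gamma(\alpha+2)q'}w_j^{-q'/q}\big)^{1/q'}$. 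The crux is then to verify that the residual weight collapses onto the dual weight, namely
$$2^{j\gamma(\alpha+2)q'}\,w_j^{-q'/q}=\omega^{k(1-q')}(2^{\gamma j})\,2^{\gamma j(\alpha+1+q'/p')}.$$
Using $-q'/q=1-q'$ this settles the $\omega$-factor, while the power of $2$ reduces to the scalar identity $q'(\alpha+2)-\frac{q'}{q}(\alpha+1+\frac qp)=\alpha+1+\frac{q'}{p'}$, a one-line computation from $\frac{q'}{q}=q'-1$ and $\frac1p+\frac1{p'}=1$. This identifies the second factor as $\|\mu\|_{\ell_{\omega^{k(1-q')},\alpha}^{p',q'}}$ and shows $\mu\mapsto\langle\mu,\cdot\rangle$ is bounded with norm at most $\|\mu\|_{\ell_{\omega^{k(1-q')},\alpha}^{p',q'}}$.

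For the converse, given $T\in(\ell_{\omega^{k},\alpha}^{p,q})^*$ I would recover its representing sequence by testing $T$ on the standard basis elements $e_{l,j}$ and defining $\mu_{l,j}$ accordingly, so that (\ref{eq:smallpairing}) holds on finitely supported $\lambda$, which are dense. To see $\mu\in\ell_{\omega^{k(1-q')},\alpha}^{p',q'}$ with the matching norm, I would saturate the two Hölder steps above: for each $j$ I choose the inner $\ell^p$-vector making Hölder an equality (the usual choice $\lambda_{l,j}\propto|\mu_{l,j}|^{p'-2}\overline{\mu_{l,j}}$), then select the outer $\ell^q$-profile $a_j$ extremally, truncating to finitely many indices and passing to the limit by monotone convergence. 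This produces $\lambda$ with $\|\lambda\|_{\ell_{\omega^{k},\alpha}^{p,q}}=1$ and $\langle\mu,\lambda\rangle$ arbitrarily close to $\|\mu\|_{\ell_{\omega^{k(1-q')},\alpha}^{p',q'}}$, giving the reverse inequality $\|\mu\|_{\ell_{\omega^{k(1-q')},\alpha}^{p',q'}}\le\|T\|$. The main obstacle is purely the weight bookkeeping of the displayed identity: once it is checked, everything else follows verbatim from the reflexive $\ell^q(\ell^p)$ duality as in \cite{Gonessa}, the role of the normalizing factor $2^{j\gamma(\alpha+2)}$ being exactly to convert the naive dual weight $w_j^{-q'/q}$ into the stated $\omega^{k(1-q')}(2^{\gamma j})2^{\gamma j(\alpha+1+q'/p')}$.
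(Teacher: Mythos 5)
Your argument is correct and complete in its essentials: the key weight identity $q'(\alpha+2)-\frac{q'}{q}\left(\alpha+1+\frac qp\right)=\alpha+1+\frac{q'}{p'}$ and the exponent match $-k\frac{q'}{q}=k(1-q')$ both check out, and the double-H\"older-plus-saturation scheme is the standard proof of weighted $\ell^q(\ell^p)$ duality. The paper itself gives no proof of this lemma (it simply defers to \cite{Gonessa}), so your write-up is precisely the argument that citation stands for; the only nitpick is the conjugation in the saturating sequence, which should be $\lambda_{l,j}\propto|\mu_{l,j}|^{p'-2}\mu_{l,j}$ rather than $|\mu_{l,j}|^{p'-2}\overline{\mu_{l,j}}$, so that $\mu_{l,j}\overline{\lambda_{l,j}}=|\mu_{l,j}|^{p'}$ under the pairing (\ref{eq:smallpairing}), which conjugates $\lambda$.
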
 

We have the following atomic decomposition of the weighted Bergman spaces in consideration.
\begin{theorem}\label{thm:atomicdecomp}
  Let   $\alpha>-1$, $1< p,q<\infty$, $0<\delta<1$ and  $\{z_{l,j}=\frac{\delta^{2}}{4}l2^{\gamma j-1}+i2^{\gamma j}\}_{l,j\in\Z}$ the associated $\delta$-lattice. Assume that $\Phi\in \mathscr{L}\cup\mathscr{U}$. Put $\omega=\mathcal{T}_\Phi^{\Vec{\varepsilon}}$, and let $k\in\mathbb{R}$.  Then the following assertions are satisfied.
  \begin{itemize}
      \item[(a)] For any sequence of complex numbers  $\{\lambda_{l,j}\}_{l,j\in\Z}$ such that $$\sum_{j\in\Z}\left(\sum_{l\in\Z}|\lambda_{l,j}|^p\right)^{\frac qp}\omega^{k}(2^{\gamma j})2^{\gamma j(\alpha+1+\frac{q}{p})}<\infty,$$
      the series $$\sum_{l,j}\lambda_{l,j}2^{\gamma j(\alpha+1+\frac qp)}K_\alpha(z,z_{l,j})$$
      is convergent in $A^{p,q}_{\omega^{k},\alpha}(\C_+)$. Moreover, its sum $F$ satisfies the estimate
 \begin{equation}\label{ineqatomdecomp}
  \|F\|_{A^{p,q}_{\omega^{k},\alpha}}^q\leq C_\delta\sum_{j}\left(\sum_{l}|F(z_{l,j})|^p\right)^{\frac qp}\omega^{k}(2^{\gamma j})2^{\gamma j(\alpha+1+\frac qp)}.   
 \end{equation}
 \item[(b)] If $\delta$ is small enough, then any function $F\in A^{p,q}_{\omega^{k},\alpha}(\C_+)$ may be represented as $$F(z)=\sum_{l,j}\lambda_{l,j}2^{\gamma j(\alpha+1+\frac qp)}K_\alpha(z,z_{l,j})$$
 with 
 \begin{equation}\label{eq:reverseineqatomdecomp}
   \sum_{j}\left(\sum_{l}|F(z_{l,j})|^p\right)^{\frac qp}\omega^{ k}(2^{\gamma j})2^{\gamma j(\alpha+1+\frac qp)}\leq C_\delta \|F\|_{A^{p,q}_{\omega^{k},\alpha}}^q.  
 \end{equation}
  \end{itemize}
  
\end{theorem}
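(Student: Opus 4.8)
The plan is to prove the atomic decomposition theorem in two independent parts, leaning heavily on the sampling inequalities from Theorem~\ref{thm:sampling} and the duality of the sequence spaces from Lemma~\ref{lem:dualsmallspaces}. For part~(a), I would first show that the synthesis operator $S\lambda(z)=\sum_{l,j}\lambda_{l,j}2^{\gamma j(\alpha+1+q/p)}K_\alpha(z,z_{l,j})$ maps $\ell^{p,q}_{\omega^k,\alpha}$ boundedly into $A^{p,q}_{\omega^k,\alpha}(\C_+)$. The cleanest route is duality: I would fix $g\in A^{p',q'}_{\omega^{k(1-q')},\alpha}(\C_+)$, which by Proposition~\ref{pro:duality} represents a generic functional, and pair $S\lambda$ against $g$ via $\langle\cdot,\cdot\rangle_\alpha$. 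Using the reproducing property of $K_\alpha$ (valid here since $\alpha+1<q(\beta+1)$ is built into the range of parameters via Proposition~\ref{prop:boundednessP1rho}), the pairing $\langle S\lambda,g\rangle_\alpha$ reduces to $\sum_{l,j}\lambda_{l,j}2^{\gamma j(\alpha+1+q/p)}\overline{g(z_{l,j})}$, which I would estimate by H\"older's inequality on the sequence spaces using Lemma~\ref{lem:dualsmallspaces}, followed by the forward sampling inequality~(\ref{eq:sampling1}) applied to $g$. This gives $|\langle S\lambda,g\rangle_\alpha|\lesssim\|\lambda\|_{\ell^{p,q}_{\omega^k,\alpha}}\|g\|_{A^{p',q'}_{\omega^{k(1-q')},\alpha}}$, and taking the supremum over $g$ yields both the convergence of the series and the bound $\|S\lambda\|_{A^{p,q}_{\omega^k,\alpha}}\lesssim\|\lambda\|_{\ell^{p,q}_{\omega^k,\alpha}}$.

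For part~(b), I would invoke the reverse sampling inequality~(\ref{eq:sampling2}), which holds precisely because $\delta$ is chosen small enough. The strategy is the standard one: build an approximate reconstruction operator and correct it by a Neumann series. I would define, for $F\in A^{p,q}_{\omega^k,\alpha}(\C_+)$, the candidate coefficients $\lambda_{l,j}$ starting from the samples $F(z_{l,j})$, and consider the operator $R=S\circ E$ where $E$ is the sampling map $F\mapsto\{F(z_{l,j})\}$. By~(\ref{eq:sampling1}) the map $E$ is bounded from $A^{p,q}_{\omega^k,\alpha}$ into $\ell^{p,q}_{\omega^k,\alpha}$, and by part~(a) the map $S$ is bounded back. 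The key is to show that $R$ is close to the identity on $A^{p,q}_{\omega^k,\alpha}(\C_+)$: writing $F-RF$ and using assertion~(b) of Lemma~\ref{lem:stepsampling} together with the estimate $\mathcal I(F)\lesssim\|F\|^q_{A^{p,q}_{\omega^k,\alpha}}$ from Lemma~\ref{pro:mainppm6}, I expect a bound $\|F-RF\|_{A^{p,q}_{\omega^k,\alpha}}\lesssim\delta\|F\|_{A^{p,q}_{\omega^k,\alpha}}$. Choosing $\delta$ small forces $\|I-R\|<1$, so $R$ is invertible by a Neumann series, and the representation $F=\sum_{l,j}\mu_{l,j}2^{\gamma j(\alpha+1+q/p)}K_\alpha(\cdot,z_{l,j})$ is obtained with $\mu=E R^{-1}F$, whose norm in $\ell^{p,q}_{\omega^k,\alpha}$ is controlled by $\|F\|_{A^{p,q}_{\omega^k,\alpha}}$, giving~(\ref{eq:reverseineqatomdecomp}).

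The main obstacle I anticipate is making the estimate $\|F-RF\|_{A^{p,q}_{\omega^k,\alpha}}\lesssim\delta\|F\|_{A^{p,q}_{\omega^k,\alpha}}$ rigorous, since the difference $F(z)-\sum_{l,j}\chi_{I_{l,j}}F(z_{l,j})$-type expressions must be controlled uniformly across the Whitney cells while keeping track of the weight $\omega^k(2^{\gamma j})$ and the mixed-norm structure. This is exactly where the quantity $\mathcal I(F)$ was engineered: Lemma~\ref{pro:mainppm6} is tailored so that the error term produced by the pointwise oscillation estimate in Lemma~\ref{lem:stepsampling}(b) is absorbed into $\delta^q\mathcal I(F)\lesssim\delta^q\|F\|^q_{A^{p,q}_{\omega^k,\alpha}}$, which is precisely the computation already carried out in the proof of the converse direction of Theorem~\ref{thm:sampling}. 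Thus much of the hard analytic work is done; the remaining care lies in verifying that the weight behaves well under the lattice dilations $y_j=2^{\gamma j}$ (controlled by Lemmas~\ref{lem:omegainterval1}--\ref{lem:FR2} and the overlap properties in Lemma~\ref{lem:intervals}) and that the convergence of the series is genuinely in norm, not merely weak-$*$, which the duality argument in part~(a) secures.
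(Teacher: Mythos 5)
Your part~(a) is, in substance, the paper's own proof: the paper realizes the synthesis operator as the adjoint $R^*$ of the sampling map $RF=\{F(z_{l,j})\}$ under the pairings of Proposition~\ref{pro:duality} and Lemma~\ref{lem:dualsmallspaces}, and the boundedness of $R^*$ is exactly your combination of the forward sampling inequality~(\ref{eq:sampling1}), H\"older on the sequence spaces, and the reproducing property.

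Part~(b) is where you genuinely depart from the paper, and where your plan has a gap. The paper does not perturb the identity: from the reverse sampling inequality~(\ref{eq:sampling2}) it deduces that the sampling map is bounded below, hence the synthesis operator $R^*$ is \emph{onto}, and the coefficient bound~(\ref{eq:reverseineqatomdecomp}) follows from the open mapping/Hahn--Banach argument applied to the induced bijection from $\ell^{p',q'}_{\omega^{k(1-q')},\alpha}(\C_+)/\mathcal{N}$ onto the Bergman space. Your Neumann-series route instead hinges on the estimate $\|F-SEF\|\lesssim\delta\|F\|$, and this is \emph{not} ``precisely the computation already carried out in the proof of the converse direction of Theorem~\ref{thm:sampling}.'' That computation bounds $\|F\|^q$ by $\delta^q\mathcal{I}(F)$ (Lemma~\ref{pro:mainppm6}) plus the sampling norm; it never forms the synthesized function $SEF$, let alone compares it with $F$. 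To estimate $F-SEF$ you would have to write $F=P_\alpha F$, break the reproducing integral over the Whitney cells (which overlap, so they must first be disjointified), and control, besides the oscillation of $F$ handled by Lemma~\ref{lem:stepsampling}(b), the second error term $|F(z_{l,j})|\,|K_\alpha(z,w)-K_\alpha(z,z_{l,j})|$; this requires a smoothness estimate for the Bergman kernel on Bergman balls that appears neither in the paper nor in your proposal, together with the boundedness of $P_\alpha^+$ on $L^{p,q}_{\omega^{k},\alpha}(\C_+)$ (Theorem~\ref{thm:main1}) to absorb it. There is also a structural obstruction: for $SEF$ to be a Riemann sum of $P_\alpha F$, the atom attached to $F(z_{l,j})$ must carry the weight $V_\alpha(I_{l,j}+iJ_j)\simeq\delta^4\,2^{\gamma j(\alpha+2)}$, whereas your $S$ (following the statement) uses $2^{\gamma j(\alpha+1+\frac qp)}$; these differ by the factor $2^{\gamma j(\frac qp-1)}$, which is unbounded over $j\in\Z$ unless $p=q$, so for $p\neq q$ the operator $SE$ is not close to the identity in any sense and no choice of small $\delta$ makes the Neumann series converge. (The paper's duality argument produces the normalization $2^{\gamma j(\alpha+2)}$ automatically from the pairing~(\ref{eq:smallpairing}); the mismatch with the exponent in the statement is a defect of the paper itself, but it is harmless for the adjoint route, while it is fatal for yours.) So your part~(b) would need both a new kernel-oscillation lemma and a renormalization of the coefficients before the perturbation argument could be run; the paper's functional-analytic argument avoids all of this.
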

\begin{proof}
The follows the now classical arguments in the literature (see for example \cite{BBGNPR}).
\smallskip

   We recall with Lemma \ref{pro:duality} that the dual space
 $\left(A_{\omega^k,\alpha}^{p,q}(\mathbb{C}_+)\right)^{*}$ of the Bergman space $A_{\omega^k,\alpha}^{p,q}(\mathbb{C}_+)$ identifies with
 $A_{\omega^{k(1-q')},\alpha}^{p',q'}(\mathbb{C}_+)$ under the integral pairing
 $$\langle f,g\rangle_{\alpha}=\int_{\mathbb{C}_+}f(z)\overline {g(z)}dV_{\alpha}(z).$$
Lemma \ref{lem:dualsmallspaces} gives us that
the dual space $(\ell_{\omega^k,\alpha}^{p,q}(\mathbb{C}))^{*}$ of $\ell_{\omega^k,\alpha}^{p,q}(\mathbb{C})$
identifies with $\ell_{\omega^{k(1-q')},\alpha}^{p',q'}(\mathbb{C})$ under the sum
pairing (\ref{eq:smallpairing}).

$(i)$ From the first part of the sampling theorem, Theorem \ref{thm:sampling}, we deduce that the linear operator
$$R:\,\,\,A_{\omega^k,\alpha}^{p,q}(\mathbb{C}_+)\rightarrow \ell_{\omega^k,\alpha}^{p,q}(\mathbb{C})$$ $${F\mapsto \{F(z_{l,j})\}}$$
 is bounded. Hence its  adjoint under
the pairing $\langle \cdot,\cdot\rangle_{\alpha}$\,\,$$R^{*}:\,\,\,\ell_{\omega^{k(1-q')},\alpha}^{p',q'}(\mathbb{C})\rightarrow A_{\omega^{k(1-q')},\alpha}^{p',q'}(\mathbb{C}_+)$$ is
also bounded. It is then enough to show that
$$R^{*}(\{\lambda_{l,j}\})= \sum_{l,j} {\lambda_{l,j}2^{\gamma j(\alpha+2)}K_{\alpha}(\cdot,
z_{l,j})}.$$
For $F\in
A_{\omega^k,\alpha}^{p,q}(\mathbb{C}_+)$ and $\{\lambda_{l,j}\}\in \ell_{\omega^k,\alpha}^{p,q}(\mathbb{C})$, we have 
\begin{eqnarray*}
 \langle RF,\{\lambda_{l,j}\}\rangle_{\alpha} &=&
\sum_{l,j} F(z_{l,j})\overline {\lambda_{l,j}}2^{j\gamma (\alpha +2)}\\ & =& \sum_{l,j} (P_{\alpha}F(z_{l,j}))\overline
{\lambda_{l,j}}2^{j\gamma (\alpha +2)}\\ &=& \sum_{l,j}
\int_{\mathbb{C}_+}K_{\alpha}(z_{l,j},w)F(w)dV_\alpha (w)\overline {\lambda_{l,j}}2^{j\gamma (\alpha +2)}\\ &=& \int_{\mathbb{C}_+}F(w)\overline{\left(\sum_{l,j}
{\lambda_{l,j}}K_{\alpha}(w,z_{l,j})2^{j\gamma (\alpha +2)}\right)}dV_\alpha(w)\\ &=&
\langle F,R^{*}(\{\lambda_{l,j}\})\rangle_{\alpha}.
\end{eqnarray*}
This conclude the first part of the theorem.

\vskip .2cm
 $(ii)$ From the second part of Theorem \ref{thm:sampling}, we have for $\delta$ small enough,
$$||F||_{A_{\omega^{k(1-q')},\alpha}^{p',q'}} \le C_{\delta}||\{F(z_{l,j})\}||_{\ell_{\omega^{k(1-q')},\alpha}^{p',q'}}. $$
Thus $R^{*}:\,\,\,\ell_{\omega^{k(1-q')},\alpha}^{p',q'}(\mathbb{C})\rightarrow A_{\omega^{k(1-q')},\alpha}^{p',q'}(\mathbb{C}_+)$ is
onto. Moreover, if we denote by $\mathcal {N}$ the subspace of
$\ell_{\omega^{k(1-q')},\alpha}^{p',q'}(\mathbb{C})$ consisting of all sequences
$\{\lambda_{l,j}\}_{l,j\in\mathbb{Z}}$ such that the sum $\sum_{l,j}
\lambda_{l,j}K_{\alpha}(z, z_{l,j})2^{\gamma j(\alpha+1+\frac qp)}$ is identically zero, then the linear map
$$\ell_{\omega^{k(1-q')},\alpha}^{p',q'}(\mathbb{C})\setminus\mathcal {N} \rightarrow A_{\omega^{k(1-q')},\alpha}^{p',q'}(\mathbb{C}_+)$$
is a bounded isomorphism. The continuity of its inverse which
follows from the Hahn-Banach theorem gives the estimate (\ref{eq:reverseineqatomdecomp}). This
completes the proof of the theorem. 

\end{proof}
\bibliographystyle{plain}

\end{document}